\documentclass{article}%
\usepackage{amsmath}
\usepackage{amsfonts}
\usepackage{amssymb}
\usepackage{graphicx}%
\setcounter{MaxMatrixCols}{30}
\providecommand{\U}[1]{\protect\rule{.1in}{.1in}}
\newtheorem{theorem}{Theorem}

\newtheorem{corollary}[theorem]{Corollary}

\newtheorem{definition}[theorem]{Definition}

\newtheorem{lemma}[theorem]{Lemma}

\newtheorem{problem}[theorem]{Problem}
\newtheorem{proposition}[theorem]{Proposition}

\newenvironment{proof}[1][Proof]{\noindent\textbf{#1.} }{\ \rule{0.5em}{0.5em}}
\begin{document}

\title{Notes on a theorem of Naji}
\author{Lorenzo Traldi\\Lafayette College\\Easton, Pennsylvania USA}
\date{}
\maketitle

\begin{abstract}
We present a new proof of an algebraic characterization of circle graphs due
to W. Naji. For bipartite graphs, Naji's theorem is equivalent to an algebraic
characterization of planar matroids due to J.\ Geelen and B. Gerards. Naji's
theorem also yields an algebraic characterization of permutation graphs.

\bigskip

Keywords. circle graph, graphic matroid, permutation graph, split decomposition

\bigskip

Mathematics Subject\ Classification. 05C50

\end{abstract}

\section{Introduction}

This paper is concerned with the following notion.

\begin{definition}
\label{interlace}Let $W=w_{1}...w_{2n}$ be a double occurrence word in the
letters $v_{1},...,v_{n}$. The \emph{interlacement graph} $\mathcal{I}(W)$ is
the simple graph with vertex-set $V=\{v_{1},...,v_{n}\}$, in which $v_{i}$ and
$v_{j}$ are adjacent if and only if they are \emph{interlaced} in $W$, i.e.,
they appear in $W$ in the order $v_{i}v_{j}v_{i}v_{j}$ or $v_{j}v_{i}%
v_{j}v_{i}$. A \emph{circle graph} is a simple graph that can be realized as
the interlacement graph of some double occurrence word.
\end{definition}

As far as we know, the idea of interlacement first appeared in the form of a
symmetric matrix used in Brahana's 1921 study of curves on surfaces \cite{Br}.
Interlacement graphs were studied by Zelinka \cite{Z}, who credited the idea
to Kotzig. During the subsequent decades several researchers discussed graphs
and matrices defined using interlacement. Cohn and Lempel \cite{CL} and Even
and Itai \cite{EI} used them to analyze permutations, and Bouchet \cite{Bold}
and Read and Rosenstiehl \cite{RR} used them to study Gauss' problem of
characterizing generic self-intersecting curves in the plane.\ Recognition
algorithms for circle graphs have been introduced by Bouchet \cite{Bec},
Gioan, Paul, Tedder and Corneil \cite{GPTC}, Naji \cite{N1, N} and Spinrad
\cite{Sp}.

Although Naji's is not the best of the circle graph recognition algorithms in
terms of computational complexity, it is particularly interesting for two
reasons. The first reason is that Naji's characterization is only indirectly
algorithmic; it involves a system of equations that may be defined for any
graph, which is only solvable for circle graphs.\ The second reason is that
the two known proofs of the theorem are quite long. The original argument ends
on p. 173 of Naji's thesis \cite{N1}. A much shorter argument was given by
Gasse \cite{G}, but Gasse's argument requires Bouchet's circle graphs
obstructions theorem \cite{Bco}, which itself has a long and difficult proof.

A couple of years ago, Geelen and Gerards \cite{GG} characterized graphic
matroids by a system of equations that resembles Naji's system of equations.
(Indeed, they mention that Naji's theorem motivated their result.) The
resemblance is limited to the equations; there is a striking contrast between
their concise, well-motivated proof\ and Naji's long, detailed argument. This
contrast encouraged us to look for an alternative proof of Naji's theorem; we
eventually developed the one presented below.\ Although our argument is
certainly not as elegant as the proof of Geelen and Gerards, it is shorter
than either Naji's original proof or the combination of a proof of Bouchet's
obstructions theorem and Gasse's derivation of Naji's theorem.

In addition to proving Naji's theorem for circle graphs in general, at the end
of the paper we briefly discuss two special cases. First, the restriction of
Naji's theorem to bipartite graphs is equivalent to the restriction of the
Geelen-Gerards characterization to planar matroids. Second, Naji's theorem
also characterizes permutation graphs.

Before proceeding we should thank Jim\ Geelen for his comments on Naji's
theorem. In particular, he pointed out that although all circle graphs have
solutions of Naji's equations that arise naturally from double occurrence
words, some circle graphs also have other Naji solutions that do not seem so
natural. He conjectured that these other solutions might correspond in some
way to splits. (See Sections 2 and 3 for definitions, and Section 5 for
examples.) Although we do not address Geelen's conjecture directly we do
provide some indirect evidence for it, as the first step of our proof of
Naji's theorem involves showing that none of these other solutions occur in
circle graphs that have no splits. (See Section 6.) We should also thank an
anonymous reader, whose comments led to Corollary \ref{moresplit} and several
other improvements in the paper.

\section{Naji's equations and their solutions}

We begin with some definitions.

\begin{definition}
\label{najieq}\cite{N1, N} Let $G$ be a simple graph. For each pair of
distinct vertices $v$ and $w$ of $G$, let $\beta(v,w)$ and $\beta(w,v)$ be
distinct variables. Then the \emph{Naji equations} for $G$ are the following.

(a) For each edge $vw$ of $G$, $\beta(v,w)+\beta(w,v)=1$.

(b) If $v,w,x$ are three distinct vertices of $G$ such that $vw\in E(G)$ and
$vx,wx\notin E(G)$, then $\beta(x,v)+\beta(x,w)=0$.

(c) If $v,w,x$ are three distinct vertices of $G$ such that $vw,vx\in E(G)$
and $wx\notin E(G)$, then $\beta(v,w)+\beta(v,x)+\beta(w,x)+\beta(x,w)=1$.
\end{definition}

If the Naji equations of $G$ have a solution over $GF(2)$, the field with two
elements, then any such solution is a \emph{Naji solution} and $G$ is a
\emph{Naji graph}. We use the following notation:

\begin{definition}
If $G$ is a graph then $\mathcal{B}(G)$ denotes the set of Naji solutions of
$G$.
\end{definition}

Of course $G$ is a Naji graph if and only if $\mathcal{B}(G)\not =\varnothing
$, and elementary linear algebra guarantees that if $\mathcal{B}%
(G)\neq\varnothing$ then $\left\vert \mathcal{B}(G)\right\vert =2^{k}$ for
some $k\geq0$. In particular, if $n=1$ then $G$ is a Naji graph and
$\mathcal{B}(G)=\{\varnothing\}$.

Notice that the three types of Naji equations are distinct. An equation of
type (a) involves only two vertices, an equation of type (b) involves no
nonzero constant and an equation of type (c) has four terms. For this reason,
when discussing the Naji equations we do not always cite a specific type of
equation. We might also mention two obvious consequences of the equations,
which will be useful. First:\ the type (b) equations imply that $\beta(x,-)$
is constant on each connected component of $G-N(x)$. (Here $N(x)$ denotes the
open neighborhood of $x$, $N(x)=\{y\in V(G)\mid xy\in E(G)\}$.) Second:
variants of a type (c) equation are obtained by replacing $\beta(v,w)$ or
$\beta(v,x)$ with $\beta(w,v)$ or $\beta(x,v)$ (respectively), and adjusting
the right hand side accordingly.

\emph{Naji's theorem} \cite{N1, N} states that $G$ is a Naji graph if and only
if $G$ is a circle graph. One direction of Naji's theorem is easy.

\begin{proposition}
\label{circleN}Every circle graph is a Naji graph.
\end{proposition}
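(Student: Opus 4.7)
The plan is to construct a Naji solution directly from a realizing double occurrence word. Let $G$ be a circle graph, and fix a double occurrence word $W=w_{1}\cdots w_{2n}$ in the letters of $V(G)$ with $G=\mathcal{I}(W)$. For each vertex $v$ let $\pi_{1}(v)<\pi_{2}(v)$ denote the two positions in $\{1,\dots,2n\}$ at which $v$ appears. Define $\beta(v,w)=1$ when $\pi_{1}(v)<\pi_{1}(w)<\pi_{2}(v)$ and $\beta(v,w)=0$ otherwise; informally, $\beta(v,w)=1$ precisely when the first occurrence of $w$ lies strictly between the two occurrences of $v$. I would then verify that $\beta$ satisfies all three types of Naji equation by case analysis on the relative positions of occurrences.

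Any two distinct vertices $u,u'$ stand in one of three relationships within $W$: they are \emph{interlaced} (occurrences alternate as $uu'uu'$ or $u'uu'u$), \emph{nested} (one vertex's two occurrences both lie between the other's), or \emph{disjoint}. A one-line inspection gives $\beta(u,u')+\beta(u',u)=1$ when $u,u'$ are interlaced and $\beta(u,u')+\beta(u',u)=0$ otherwise. This verifies the type (a) equations at once.

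For the type (b) equations, suppose $vw\in E(G)$ and $vx,wx\notin E(G)$; the required identity $\beta(x,v)+\beta(x,w)=0$ is equivalent to the statement that $v$ is nested inside $x$ if and only if $w$ is. Since $v$ and $w$ interlace while $x$ interlaces neither, both occurrences of $x$ must lie in a single one of the four arcs cut out of $W$ by the alternating pattern of $v$ and $w$; inspecting each of these four arcs shows that in every case $v$ is nested inside $x$ iff $w$ is.

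For the type (c) equations, suppose $vw,vx\in E(G)$ and $wx\notin E(G)$. I would split into subcases according to the relation of $w$ and $x$: either $w$ is nested inside $x$, or $x$ is nested inside $w$, or $w$ and $x$ are disjoint. In each subcase the sum $\beta(w,x)+\beta(x,w)$ is known from the interlacement analysis above, and since $v$ interlaces both $w$ and $x$ there are only a handful of possible orderings of the six positions $\pi_{1}(v),\pi_{2}(v),\pi_{1}(w),\pi_{2}(w),\pi_{1}(x),\pi_{2}(x)$; for each ordering one reads off $\beta(v,w)+\beta(v,x)$ directly and checks that the four-term sum equals $1$. The main technical burden of the proof is concentrated in this last case, but each subcase collapses to a short direct inspection once the ordering of the relevant positions is written down.
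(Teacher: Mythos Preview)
Your approach is essentially the paper's: construct $\beta$ directly from positions in a realizing double occurrence word and verify the three equation types by case analysis. The paper's $\beta$ uses an arbitrary orientation and a cyclic reading (``$w^{out}$ precedes $v^{out}$ starting from $v^{in}$''), while yours fixes the linear order and tests whether $\pi_1(w)$ lies between $\pi_1(v)$ and $\pi_2(v)$; these are different Naji solutions, but the verification strategy is the same.

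There is, however, a genuine slip in your ``one-line inspection.'' You assert that $\beta(u,u')+\beta(u',u)=0$ whenever $u,u'$ are not interlaced, but this fails in the nested case. If $u'$ is nested in $u$, say $\pi_1(u)<\pi_1(u')<\pi_2(u')<\pi_2(u)$, then $\beta(u,u')=1$ while $\beta(u',u)=0$, so the sum is $1$, not $0$. Only the disjoint case gives sum $0$. This matters for your type~(c) argument, where you say ``the sum $\beta(w,x)+\beta(x,w)$ is known from the interlacement analysis above'': if you carry the wrong value into the nested subcase you will be looking for $\beta(v,w)+\beta(v,x)=1$ there, whereas in fact $\beta(v,w)+\beta(v,x)=0$ in every nested configuration (both first occurrences of $w$ and $x$ lie on the same side of $\pi_1(v)$). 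Once you correct the nested-sum value to $1$, your case analysis goes through and your $\beta$ is indeed a Naji solution; the error is in the bookkeeping, not in the construction.
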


\begin{proof}
Consider a double occurrence word $W$. An \emph{orientation} of $W$ is given
by arbitrarily designating one appearance of each letter as \textquotedblleft
initial\textquotedblright; the other appearance is \textquotedblleft
terminal\textquotedblright.\ We use the notation $v^{in}$ and $v^{out}$ for
the initial and terminal appearances of $v$, respectively. For each
orientation of $W$, define a function $\beta$ by: $\beta(v,w)=0$ if and only
if when we cyclically permute $W$ to begin with $v^{in}$, $w^{out}$ precedes
$v^{out}$.

We claim that this $\beta$ is a Naji solution of $G$. If $vw\in E(G)$ then
after cyclically permuting $W$ to begin with $v^{in}$, $W$ will be in the form
$v^{in}...w^{in}...v^{out}...w^{out}$ or in the form $v^{in}...w^{out}%
...v^{out}...w^{in}$. In the first case, $\beta(v,w)+\beta(w,v)=1+0$ and in
the second case, $\beta(v,w)+\beta(w,v)=0+1$. For the type (b) equations, if
$vw\in E(G)$ and $vx,wx\notin E(G)$ then after cyclically permuting $W$ to
begin with $x^{in}$, and interchanging $v$ and $w$ (if necessary) so that $v$
appears before $w$, $W$ will be in one of these forms.
\[%
\begin{array}
[c]{ccc}%
x^{in}...v...w...v...w...x^{out}... & \text{ \ \ \ } & x^{in}...x^{out}%
...v...w...v...w...
\end{array}
\]
In the first case $\beta(x,v)+\beta(x,w)=1+1$, and in the second case
$\beta(x,v)+\beta(x,w)=0+0$. For the type (c) equations, if $vw,vx\in E(G)$
and $wx\notin E(G)$ then after cyclically permuting $W$ to begin with $v^{in}%
$, and interchanging $w$ and $x$ (if necessary) so that $w$ appears before
$x$, we may presume $W$ is in one of these forms:%
\[%
\begin{array}
[c]{ccc}%
v^{in}...w^{in}...x^{in}...v^{out}...x^{out}...w^{out}... & \text{ \ \ \ } &
v^{in}...w^{out}...x^{in}...v^{out}...x^{out}...w^{in}...\\
\text{ } & \text{ } & \text{ }\\
v^{in}...w^{in}...x^{out}...v^{out}...x^{in}...w^{out}... &  & v^{in}%
...w^{out}...x^{out}...v^{out}...x^{in}...w^{in}...
\end{array}
\]
Proceeding from left to right, the sum $\beta(v,w)+\beta(v,x)+\beta
(w,x)+\beta(x,w)$ is $1+1+0+1$ or $0+1+1+1$ for the words in the top row, and
$1+0+0+0$ or $0+0+1+0$ for the words in the bottom row.
\end{proof}

We say the Naji solution defined in the proof of Proposition \ref{circleN}
\emph{corresponds} to the orientation of $W$ used to define it. Notice that
cyclically permuting $W$ has no effect on the corresponding Naji solution.

\begin{definition}
For each vertex $v\in V(G)$, let $\delta(v)$ be given by $\delta(v)(v,w)=1$
$\forall w\neq v$, $\delta(v)(w,v)=1$ if $vw\in E(G)$, and $\delta(v)(x,y)=0$ otherwise.
\end{definition}

\begin{definition}
Let $\rho$ be given by $\rho(v,w)=1$ $\forall v\neq w\in V(G)$.
\end{definition}

If $\beta$ is the Naji solution of $\mathcal{I}(W)$ corresponding to an
orientation of a double occurrence word $W$, then reversing $W$ results in the
Naji solution $\beta+\rho$. Also, if $v\in V(\mathcal{I}(W))$ then
$\beta+\delta(v)$ is the Naji solution of $\mathcal{I}(W)$ corresponding to
the orientation obtained by interchanging the appearances of $v^{in}$ and
$v^{out}$ in $W$. Consequently, $\beta+\rho$ and $\beta+\delta(v)$ are both
Naji solutions of $\mathcal{I}(W)$. A similar assertion holds for all Naji graphs:

\begin{proposition}
\label{solution}Let $\beta$ be a Naji solution for $G$. Then $\beta+\rho$ is a
Naji solution and for each $v\in V(G)$, $\beta+\delta(v)$ is a Naji solution.
\end{proposition}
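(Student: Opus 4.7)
The key observation is linearity. Each Naji equation has the form $\sum_i \beta(\cdot,\cdot) = c$ with $c\in\{0,1\}$, so for any function $\alpha$ on ordered pairs of distinct vertices, $\beta+\alpha$ is a Naji solution if and only if $\alpha$ makes the left-hand side of every equation of types (a), (b), (c) evaluate to $0$ — that is, if and only if $\alpha$ solves the \emph{homogeneous} Naji system. The proposition therefore reduces to verifying that $\rho$ and each $\delta(u)$ solve the homogeneous Naji system.

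For $\rho$ this is immediate: every equation of types (a), (b), (c) has an even number of $\beta$-terms on the left-hand side (two, two, and four, respectively), and $\rho$ assigns $1$ to every ordered pair of distinct vertices, so the homogeneous left-hand side is $0$ in every case.

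For $\delta(u)$ I would argue by case analysis on whether $u$ appears among the vertices named in the equation. Since $\delta(u)$ is supported only on pairs $(u,\cdot)$ and on pairs $(y,u)$ with $uy\in E(G)$, an equation mentioning none of $u$'s coordinates contributes nothing. Otherwise exactly one of the vertices named in the equation equals $u$ (they are assumed distinct), and I would walk through the three equation types together with the possible placements of $u$. The edge hypotheses of the equation then determine which entries of the form $\delta(u)(y,u)$ vanish and which equal $1$, and in each sub-case the nonzero $\delta(u)$-terms pair up to sum to $0$. For instance, in a type (c) equation $\beta(v,w)+\beta(v,x)+\beta(w,x)+\beta(x,w)=1$ with $u=w$, the four $\delta(u)$-contributions are $[vu\in E(G)]$, $0$, $1$, and $[xu\in E(G)]$, which evaluate to $1,0,1,0$ and sum to $0$.

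No step here is genuinely difficult; the only obstacle is bookkeeping — enumerating all placements of $u$ without omitting a sub-case, particularly in the three type (c) placements where the asymmetry between the $\beta(w,x)$ and $\beta(x,w)$ entries must be respected. The proof as a whole is a one-line linearity reduction followed by a short table of routine verifications.
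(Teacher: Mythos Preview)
Your proposal is correct and follows essentially the same approach as the paper: the paper also reduces to the homogeneous system, observes that $\rho$ works because every Naji equation has an even number of summands, and handles $\delta(v)$ by noting that in each equation an even number of terms receive a nonzero contribution from $\delta(v)$ (illustrating with the same type~(c) case analysis). Your write-up is in fact more explicit about the bookkeeping than the paper's, which leaves the full case check to the reader.
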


\begin{proof}
The fact that $\beta+\rho$ is a Naji solution follows from the fact that every
Naji equation has an even number of summands. For $\beta+\delta(v)$, verifying
the Naji equations is a little more delicate, but it turns out that each
equation has an even number of terms to which $\delta(v)$ makes a nonzero
contribution. For example, if we consider the vertex $x$ in a type (c) Naji
equation, $\delta(x)$ contributes a 1 to to the $\beta(v,x)$ and $\beta(w,x)$
terms, but does not contribute to the $\beta(v,w)$ and $\beta(x,w)$ terms.
\end{proof}

\begin{corollary}
\label{solutions}If $G$ is a Naji graph and $v\in V(G)$, then for every subset
$X\subseteq V(G)-v$, $G$ has a Naji solution $\beta$ such that $X=\{x\in
V(G)\mid\beta(x,v)=1\}$.
\end{corollary}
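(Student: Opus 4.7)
The plan is to use Proposition~\ref{solution} repeatedly, exploiting the fact that the functions $\delta(u)$ give us independent ``toggles'' for the values of $\beta(\cdot,v)$. The key calculation I would carry out first is to understand how $\delta(u)$ interacts with pairs whose second coordinate is $v$. By the definition of $\delta$, for any $u\neq v$ and any $x\in V(G)-v$, we have $\delta(u)(x,v)=1$ only when the first coordinate is $u$ (since the alternative clause requires the second coordinate to equal $u$, which fails as $v\neq u$). Therefore $\delta(u)(x,v)$ equals $1$ if $x=u$ and $0$ otherwise; so adding $\delta(u)$ to a Naji solution flips the single value $\beta(u,v)$ and preserves every other value $\beta(x,v)$ with $x\in V(G)-v$.

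Given this, the construction is immediate. Let $\beta_0$ be any Naji solution of $G$ (which exists since $G$ is a Naji graph), and let $X_0=\{x\in V(G)-v\mid\beta_0(x,v)=1\}$. Put
\[
\beta \;=\; \beta_0 \;+\; \sum_{u\in X\,\triangle\, X_0}\delta(u),
\]
where $\triangle$ denotes symmetric difference. By iterated application of Proposition~\ref{solution}, $\beta$ is again a Naji solution. By the calculation above, $\beta(x,v)=\beta_0(x,v)+1$ when $x\in X\triangle X_0$, and $\beta(x,v)=\beta_0(x,v)$ otherwise, so $\{x\in V(G)-v\mid\beta(x,v)=1\}=X_0\triangle(X\triangle X_0)=X$, as required.

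There is no real obstacle here; once the $\delta(u)$--computation above is in hand the result is a one-line construction. The only subtlety worth remarking on is the need to verify that distinct $u$'s give independent toggles on the ``$(\cdot,v)$--slice'' of $\beta$, which is exactly what the computation shows.
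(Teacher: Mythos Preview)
Your proof is correct and is essentially the same as the paper's: the paper also starts from an arbitrary Naji solution and adds the $\delta$'s for precisely the vertices in $X\triangle X_0$, only written as two separate sums over $\{x\in X:\beta(x,v)=0\}$ and $\{y\notin X\cup\{v\}:\beta(y,v)=1\}$. Your explicit verification that $\delta(u)(x,v)=1$ iff $x=u$ makes the argument slightly more self-contained than the paper's version.
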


\begin{proof}
Begin with an arbitrary Naji solution $\beta$ and consider the Naji solution%
\[
\beta+\sum_{\substack{x\in X\\\beta(x,v)=0}}\delta(x)+\sum_{\substack{y\notin
X\cup\{v\}\\\beta(y,v)=1}}\delta(y)\text{.}%
\]

\end{proof}

Another corollary expresses an important insight: the space of all Naji
solutions provides more information than any individual Naji solution does.

\begin{corollary}
A Naji graph is determined up to isomorphism by its Naji solutions. However,
nonisomorphic Naji graphs of the same order may share some Naji solutions.
\end{corollary}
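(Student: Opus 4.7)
The plan is to recover the edge set $E(G)$ directly from $\mathcal{B}(G)$, together with the vertex set $V(G)$ (which is implicit in the common domain of the solutions). Since any relabeling of vertices induces the corresponding relabeling of Naji solutions, recovering the labeled graph from the pair $(V(G), \mathcal{B}(G))$ yields $G$ up to isomorphism.

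The key observation is that type~(a) equations force $\beta(v,w)+\beta(w,v)=1$ for every edge $vw$ and every Naji solution $\beta$, whereas non-edges admit both values of this sum. I would prove the following criterion: $vw\in E(G)$ if and only if every $\beta\in\mathcal{B}(G)$ satisfies $\beta(v,w)+\beta(w,v)=1$. The forward direction is simply the type~(a) equation. For the converse, assume $vw\notin E(G)$ and pick any $\beta\in\mathcal{B}(G)$ (which exists because $G$ is a Naji graph). By Proposition~\ref{solution}, $\beta+\delta(v)$ is also a Naji solution. From the definition of $\delta(v)$ we have $\delta(v)(v,w)=1$ while $\delta(v)(w,v)=0$ (precisely because $vw$ is not an edge), so $(\beta+\delta(v))(v,w)+(\beta+\delta(v))(w,v)=\beta(v,w)+\beta(w,v)+1$. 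Hence at least one of $\beta$ and $\beta+\delta(v)$ gives sum $0$, completing the contrapositive.

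For the second assertion, a minimal example suffices. Let $G_1=K_2$ and $G_2=\overline{K_2}$ on the common vertex set $\{v,w\}$. Then $\mathcal{B}(G_1)$ consists of the two functions on $\{(v,w),(w,v)\}$ satisfying $\beta(v,w)+\beta(w,v)=1$, whereas $\mathcal{B}(G_2)$ has no defining equations and so contains all four such functions. Thus $\mathcal{B}(G_1)\subset\mathcal{B}(G_2)$, and the two nonisomorphic graphs share the two solutions of $G_1$. The argument presents no substantial obstacle; the only care required is the bookkeeping check that adding $\delta(v)$ flips $\beta(v,w)+\beta(w,v)$ for non-edges but preserves it for edges, which is essentially a one-line computation.
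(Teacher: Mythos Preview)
Your proof is correct. Both your argument and the paper's rely on Proposition~\ref{solution} (that $\beta+\delta(v)$ remains a Naji solution), and both use the same order-$2$ example for the second assertion. The difference lies in the recovery criterion: the paper introduces the auxiliary notion of two solutions being ``related at $v_0$'' (meaning they differ in every coordinate $\beta(v_0,y)$) and shows that $vv_0\notin E(G)$ iff some pair related at $v_0$ agrees at $(v,v_0)$. Your criterion is more direct: $vw\in E(G)$ iff the type~(a) sum $\beta(v,w)+\beta(w,v)$ equals $1$ for \emph{every} solution. Your route is arguably cleaner, since it reads the edge relation straight off the type~(a) constraint and needs only the single observation that $\delta(v)$ flips that sum exactly on non-edges; the paper's detour through ``related at $v_0$'' accomplishes the same thing with a bit more bookkeeping.
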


\begin{proof}
Suppose $G$ is a Naji graph, and $v_{0}\in V(G)$. We say two Naji solutions of
$G$ are \emph{related at} $v_{0}$ if $\beta(v_{0},y)\neq\beta^{\prime}%
(v_{0},y)$ whenever $v_{0}\neq y$. Proposition \ref{solution} tells us that
every Naji solution $\beta$ is related at $v_{0}$ to at least one other Naji
solution, as $\beta$ and $\beta+\delta(v_{0})$ are related at $v_{0}$.

Let $v\neq v_{0}\in V(G)$. If $vv_{0}\notin E(G)$, then for any Naji solution
$\beta$, $\beta$ and $\beta+\delta(v_{0})$ have $\beta(v,v_{0})=(\beta
+\delta(v_{0}))(v,v_{0})$. If $vv_{0}\in E(G)$ then consider any pair of Naji
solutions that are related at $v_{0}$, $\beta$ and $\beta^{\prime}$. The Naji
equations require $\beta(v,v_{0})\neq\beta(v_{0},v)\neq\beta^{\prime}%
(v_{0},v)\neq\beta^{\prime}(v,v_{0})$, so $\beta(v,v_{0})\neq\beta^{\prime
}(v,v_{0})$. We conclude that $vv_{0}\not \in E(G)$ if and only if $G$ has a
pair of Naji solutions that are related at $v_{0}$ and have $\beta
(v,v_{0})=\beta^{\prime}(v,v_{0})$. Consequently, the Naji solutions of $G$
determine the neighbors of $v_{0}$. Of course there is nothing special about
$v_{0}$, so the Naji solutions of $G$ determine all vertex-neighborhoods in
$G$.

There are many examples of the second sentence of the statement. For instance,
every Naji solution of the connected graph of order 2 is also a Naji solution
of the disconnected graph of order 2. In fact, it takes some patience to find
two small Naji graphs of the same order that do not share a Naji solution. For
the reader who might want to find such examples without our guidance, here is
a \textquotedblleft spoiler alert\textquotedblright: do not read the last
sentence of Section 10.
\end{proof}

\begin{proposition}
\label{indep}The set $\{\rho\}\cup\{\delta(v)\mid v\in V(G)\}$ is linearly
independent over $GF(2)$ unless $G$ is a complete graph, a star or a trivial
(edgeless) graph. If $\left\vert V(G)\right\vert =n>2$ then in each of these
exceptional cases the rank of $\{\rho\}\cup\{\delta(v)\mid v\in V(G)\}$ is $n$.
\end{proposition}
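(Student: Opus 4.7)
I would analyze a potential linear relation $c_{0}\rho + \sum_{v\in V(G)} c_{v}\delta(v) = 0$ by evaluating it at each ordered pair of distinct vertices. Since $\delta(v)(x,y)\neq 0$ only when $v=x$, or when $v=y$ and $xy\in E(G)$, the value at $(x,y)$ equals $c_{0}+c_{x}+c_{y}$ if $xy\in E(G)$ and $c_{0}+c_{x}$ if $xy\notin E(G)$. All these expressions must vanish, and the immediate consequence is that any vertex $v$ with at least one non-neighbor satisfies $c_{v}=c_{0}$.

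Call a vertex \emph{universal} if it is adjacent to every other vertex, and split cases on whether universal vertices exist. If no vertex of $G$ is universal, then $c_{v}=c_{0}$ for every $v$; any edge then forces $c_{0}+c_{0}+c_{0}=0$, i.e.\ $c_{0}=0$, collapsing the relation. Thus in this sub-case a nontrivial kernel requires that $G$ have no edge, so $G$ is edgeless and the kernel is one-dimensional, given by the relation $\rho=\sum_{v}\delta(v)$. If there is a universal vertex $u$ and also a non-universal vertex $w$, the edge $uw$ together with $c_{w}=c_{0}$ forces $c_{u}=0$; any edge between two non-universal vertices then yields $c_{0}=0$, and any edge between two universal vertices (which are necessarily adjacent) yields $c_{0}=0$ as well. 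A nontrivial kernel thus demands that the non-universal vertices form an independent set and that at most one universal vertex exists. This pinpoints $G$ as a star $K_{1,n-1}$, with one-dimensional kernel generated by $\rho=\sum_{j}\delta(w_{j})$, summed over the leaves. Finally, if every vertex is universal, then $G=K_{n}$; the edge equations $c_{0}+c_{x}+c_{y}=0$ force $c_{0}=0$ and all the $c_{v}$ equal, giving the one-dimensional kernel $\sum_{v}\delta(v)=0$.

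In every other graph the chain of deductions ends with $c_{0}=0$ and all $c_{v}=0$, so $\{\rho\}\cup\{\delta(v)\mid v\in V(G)\}$ is linearly independent. In each of the three exceptional cases the kernel is one-dimensional, so the rank of the set equals $(n+1)-1 = n$, matching the claim whenever $n>2$. The argument presents no real obstacle; the one structural observation worth emphasizing is that two universal vertices are always adjacent, which is what restricts the exceptional family on the universal side to a single universal vertex and so produces the star case rather than a wider class of graphs.
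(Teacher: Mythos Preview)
Your proof is correct. Both you and the paper analyze a dependence relation by reading off individual coordinates $(x,y)$; the only difference is organizational. The paper splits immediately on whether $\rho$ appears in the relation (treating $\sum_{s\in S}\delta(s)=0$ and $\sum_{s\in S}\delta(s)=\rho$ separately, then determining $S$), whereas you first write down the closed-form value $c_{0}+c_{x}+c_{y}\cdot[xy\in E(G)]$ and then split on whether $G$ has universal vertices. Your formula-first route arguably makes the deduction $c_{v}=c_{0}$ for non-universal $v$ a bit more transparent, but the two arguments are minor rearrangements of the same coordinate computation and arrive at the three exceptional families by the same mechanism.
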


\begin{proof}
Suppose first that $\varnothing\neq S\subseteq V(G)$ and
\[
\sum_{s\in S}\delta(s)=0\text{.}%
\]
If $s\in S$ and $v\notin S$ then the $(s,v)$ coordinate of the sum is 1, as
only $\delta(s)$ has a nonzero $(s,v)$ coordinate. The sum is $0$, so we
conclude that $S=V(G)$. If $v\neq w$ then only $\delta(v)$ and $\delta(w)$ can
have nonzero $(v,w)$ coordinates, and both are nonzero only if $vw\in E(G)$;
hence $G=K_{n}$. In this case every subset $X\subseteq V(G)$ has
\[
\sum_{x\in X}\delta(x)\not =\rho\text{,}%
\]
for either $\left\vert X\right\vert \leq1$ and there is a coordinate $(v,w)$
that does not appear in the sum, or $\left\vert X\right\vert \geq2$ and there
is a coordinate $(v,w)$ that occurs precisely two times. Either way, the
$(v,w)$ coordinate of the sum is $0$. \ We conclude that the rank of
$\{\rho\}\cup\{\delta(v)\mid v\in V(G)\}$ is $n$.

Now, suppose that $S\subseteq V(G)$ and the equation%
\[
\sum_{s\in S}\delta(s)=\rho
\]
holds. If $S=V(G)$ then the equality requires $E(G)=\varnothing$. If $S\neq
V(G)$ then notice that for every pair of distinct vertices $v$ and $w$, there
must be a summand with a nonzero $(v,w)$ coordinate; it follows that at least
one of $v,w$ is an element of $S$. As this holds for every pair of distinct
vertices and $S\neq V(G)$, it must be that $\left\vert S\right\vert
=\left\vert V(G)\right\vert -1$. The one $v\notin S$ must be adjacent to every
$s\in S$, for if $vs\notin E(G)$ then no summand would have a nonzero $(v,s)$
coordinate.\ Also, no two vertices $s,s^{\prime}\in S$ can be neighbors; if
they were, then both $\delta(s)$ and $\delta(s^{\prime})$ would have nonzero
$(s,s^{\prime})$ coordinates, and the two summands would cancel. Consequently
$G$ is a star graph with the vertices in $S$ all of degree 1.
\end{proof}

\begin{proposition}
\label{consecutive}Suppose $v$ and $w$ are two vertices of a connected circle
graph $G=\mathcal{I}(W)$. Then $v$ and $w$ appear consecutively in $W$ if and
only if there is an orientation of $W$ for which the corresponding Naji
solution has $\beta(x,v)=\beta(x,w)$ $\forall x\notin\{v,w\}$.
\end{proposition}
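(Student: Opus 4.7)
The plan is to translate the algebraic condition $\beta(x,v)=\beta(x,w)$ into a geometric statement about the chord diagram of $W$, and then combine that translation with the connectedness hypothesis. Unwinding the recipe for $\beta$ in the proof of Proposition \ref{circleN}, one checks that $\beta(x,v)=\beta(x,w)$ holds precisely when $v^{out}$ and $w^{out}$ lie in the same open arc of the circle cut at $x^{in}$ and $x^{out}$; equivalently, the chord of $x$ does not separate $v^{out}$ from $w^{out}$.

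For the ``if'' direction, I would orient $W$ so that the assumed pair of adjacent occurrences of $v$ and $w$ become $v^{out}$ and $w^{out}$. One of the two arcs between these two points is then empty of other letter occurrences, so for every $x\notin\{v,w\}$ both $x^{in}$ and $x^{out}$ lie in the other arc and the chord of $x$ cannot separate $v^{out}$ from $w^{out}$, giving the required Naji solution.

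For the ``only if'' direction, fix an orientation with the stated property and let $A$ and $B$ be the two open arcs cut at $v^{out}$ and $w^{out}$. The hypothesis forces each chord other than those of $v$ and $w$ into $\bar A$ or into $\bar B$, yielding a partition $V(G)-\{v,w\}=V_A\sqcup V_B$. Because two chords lying in $\bar A$ and $\bar B$ respectively cannot alternate on the circle, no edge of $G$ joins $V_A$ to $V_B$. A short case analysis on the positions of $v^{in}$ and $w^{in}$ then finishes the proof. If they lie in the same open arc (say $A$), then the chords of $v$ and $w$ also lie in $\bar A$, so connectedness forces $V_B=\emptyset$ and hence $B$ is empty of all letter occurrences, making $v^{out}$ and $w^{out}$ adjacent in $W$. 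In the mixed case $v^{in}\in A$, $w^{in}\in B$, the four distinguished points occur in cyclic order $v^{out},v^{in},w^{out},w^{in}$, so $v$ and $w$ do not interlace, and $\{v\}\cup V_A$ and $\{w\}\cup V_B$ sit in different components of $G$, contradicting connectedness.

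The main obstacle I anticipate is the first step: patiently verifying, from Naji's ``precedence'' definition of $\beta$, that the algebraic equality $\beta(x,v)=\beta(x,w)$ really is the non-separation statement. Once that dictionary is in hand the remainder is chord-diagram bookkeeping plus one appeal to connectedness.
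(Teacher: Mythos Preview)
Your proposal is correct and follows essentially the same route as the paper: both arguments orient so that the consecutive pair becomes $v^{out},w^{out}$ for the easy direction, and for the converse both cut the cyclic word at $v^{out}$ and $w^{out}$, observe that every other letter has both occurrences in a single arc, and then run a short case analysis on the locations of $v^{in}$ and $w^{in}$ together with connectedness. Your chord-diagram ``non-separation'' dictionary is just a geometric rephrasing of the paper's direct computation of $\beta(x,v)$ and $\beta(x,w)$ from the precedence definition; the only cosmetic gap is that you should note the remaining mixed sub-case $v^{in}\in B,\ w^{in}\in A$ is handled by the $v\leftrightarrow w$ symmetry.
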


\begin{proof}
If $W$ has an orientation in which $v^{out}$ and $w^{out}$ appear
consecutively, then the corresponding Naji solution has $\beta(x,v)=\beta
(x,w)$ $\forall x\notin\{v,w\}$.

For the converse, suppose $W$ can be oriented in such a way that the
corresponding Naji solution has $\beta(x,v)=\beta(x,w)$ $\forall
x\notin\{v,w\}$. Permute $W$ cyclically so that it is in the form
$Av^{out}Bw^{out}$; this permutation does not affect the associated Naji
solution. If $A$ or $B$ is empty, then $v^{out}$ and $w^{out}$ are
consecutive. Suppose instead that $A$ and $B$ are both nonempty; we derive
contradictions in all cases. Consider an arbitrary $x\notin\{v,w\}$. If
$x^{in}$ appears in $A$ and $x^{out}$ appears in $B$ then $\beta
(x,v)=0\neq\beta(x,w)$, a contradiction. Also, if $x^{out}$ appears in $A$ and
$x^{in}$ appears in $B$ then $\beta(x,v)=1\neq\beta(x,w)$, another
contradiction. Consequently, for every $x\notin\{v,w\}$, both $x^{in}$ and
$x^{out}$ must appear in the same one of $A,B$. If neither $v^{in}$ nor
$w^{in}$ appears in $A$, it follows that no vertex that appears in $A$ is
interlaced with a vertex that does not appear in $A$; but then $G$ is not
connected, an impossibility. Similarly, if neither $v^{in}$ nor $w^{in}$
appears in $B$ then $G$ is not connected. Consequently one of $v^{in},w^{in}$
appears in $A$ and the other appears in $B$. If $v^{in}$ appears in $A$ then
the subwords $Av^{out}$ and $Bw^{out}$ are separate double occurrence words;
and if $w^{in}$ appears in $A$ then the subwords $v^{out}B$ and $w^{out}A$ are
separate double occurrence words. Either way, the connectedness of $G$ is contradicted.
\end{proof}

\section{Prime graphs and splits}

Cunningham's split decomposition \cite{Cu} is of fundamental importance in
analyzing circle graphs.

\begin{definition}
Let $G$ be a simple graph. A \emph{split} $(X$, $Y)$ of $G$ is given by a
partition $V(G)=X\cup Y$ with $\left\vert X\right\vert $, $\left\vert
Y\right\vert \geq2$ and subsets $X^{\prime}\subseteq X$ and $Y^{\prime
}\subseteq Y$ such that the set of edges of $G$ connecting $X$ to $Y$ is
$\{xy\mid x\in X^{\prime}$ and $y\in Y^{\prime}\}$.
\end{definition}

Connected graphs of order 1, 2 or 3 have no splits, for the trivial reason
that $2+2>3$. On the other hand, it is easy to see that every graph of order 4
has a split. For $n\geq5$ a graph with no split is called \emph{prime}.

If $G$ has a split $(X,Y)$ then Cunningham called $G$ the \emph{composition}
of two smaller graphs, $G_{X}$ and $G_{Y}$. $G_{X}$ is obtained from the
induced subgraph $G[X]$ by attaching a new vertex $y_{0}$, with $N(y_{0}%
)=X^{\prime}$. $G_{Y}$ is obtained in the same way from $G[Y]$, except the new
vertex is denoted $x_{0}$. The new vertices $x_{0}$ and $y_{0}$ are called
\emph{markers.} Cunningham actually used only one marker but it is convenient
to use two for the simple reason that $G_{X}$ and $G_{Y}$ are then disjoint
graphs of orders strictly less than the order of $G$, so inductive arguments
can be set up in a natural way. Moreover, if $X^{\prime}\neq\varnothing\neq
Y^{\prime}$ then $G_{X}$ and $G_{Y}$ are both isomorphic to full subgraphs of
$G$: $G_{X}\cong G[X\cup\{y_{0}\}]$ for any $y_{0}\in Y^{\prime}$ and
$G_{Y}\cong G[Y\cup\{x_{0}\}]$ for any $x_{0}\in X^{\prime}$.

If a connected graph has a split then Cunningham showed that the graph can be
decomposed in an essentially unique way using compositions of smaller graphs.
This unique decomposition is both elegant and useful, but we do not discuss it
in detail because uniqueness of the split decomposition is not crucial here.

The following simple proposition of Bouchet \cite{Bec} allows us to focus our
attention on prime circle graphs.

\begin{proposition}
\label{circles}\cite{Bec} If $G$ has a split $(X,Y)$, then $G$ is a circle
graph if and only if $G_{X}$ and $G_{Y}$ are both circle graphs.
\end{proposition}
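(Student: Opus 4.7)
The plan is to prove both implications by working directly with double occurrence words. For the forward direction, assume $G=\mathcal{I}(W)$ is a circle graph. Since deleting chords from a chord diagram realizes arbitrary induced subgraphs, it suffices to exhibit $G_{X}$ as an induced subgraph of a circle graph, and symmetrically for $G_{Y}$. If $Y^{\prime}\neq\varnothing$, pick any $y\in Y^{\prime}$; by the definition of a split, the neighbors of $y$ within $X$ are precisely $X^{\prime}$, so $y$ plays exactly the role of the marker $y_{0}$, and $G_{X}\cong G[X\cup\{y\}]$. If $Y^{\prime}=\varnothing$ there are no edges between $X$ and $Y$, so $G_{X}$ is just $G[X]$ together with an isolated marker, which is realized by concatenating a word for $G[X]$ with the isolated pair $y_{0}y_{0}$.

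For the converse, suppose $W_{X}$ realizes $G_{X}$ and $W_{Y}$ realizes $G_{Y}$. After cyclic permutation, write $W_{X}=y_{0}Ay_{0}B$ and $W_{Y}=x_{0}Cx_{0}D$, where $A,B$ are subwords on $X$ and $C,D$ are subwords on $Y$. Set $W:=ACBD$. Each letter of $X\cup Y$ appears exactly twice in $W$, so $W$ is a double occurrence word; the claim is that $\mathcal{I}(W)=G$.

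Verification proceeds by cases on the pair of letters under consideration. For two letters of $X$, their cyclic order in $W$ agrees with their cyclic order in $AB$ (the word obtained from $W_{X}$ by deleting the two occurrences of $y_{0}$), so interlacement in $W$ matches interlacement in $W_{X}$; hence the $X$-induced subgraph of $\mathcal{I}(W)$ is $G[X]$. Two letters of $Y$ are handled symmetrically. For a mixed pair $x\in X$, $y\in Y$, observe that $x$ interlaces $y_{0}$ in $W_{X}$ if and only if exactly one occurrence of $x$ lies in $A$, equivalently $x\in X^{\prime}$; analogously, $y\in Y^{\prime}$ if and only if $y$ has exactly one occurrence in $C$. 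A short inspection of the three placements for $x$ (both in $A$, both in $B$, or split) against the three placements for $y$ shows that $x$ and $y$ are interlaced in $ACBD$ precisely when $x\in X^{\prime}$ and $y\in Y^{\prime}$, which by the split definition is exactly the edge relation in $G$ between $X$ and $Y$. The main bookkeeping obstacle is this final case analysis, but because each chord occupies a specific pair of the four blocks $A,C,B,D$, the enumeration is finite and essentially mechanical.
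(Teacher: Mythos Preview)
Your argument is correct. The direction ``$G_{X},G_{Y}$ circle graphs $\Rightarrow$ $G$ circle graph'' is essentially identical to the paper's: your word $ACBD$ built from $W_{X}=y_{0}Ay_{0}B$ and $W_{Y}=x_{0}Cx_{0}D$ is a cyclic permutation of the paper's $W_{1}W_{2}W_{3}W_{4}$ built from $G_{X}=\mathcal{I}(W_{1}y_{0}W_{3}y_{0})$ and $G_{Y}=\mathcal{I}(W_{2}x_{0}W_{4}x_{0})$.

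The other direction differs. You invoke the observation (stated in the paper's text just before the proposition) that when $Y'\neq\varnothing$ the graph $G_{X}$ is isomorphic to the induced subgraph $G[X\cup\{y\}]$ for any $y\in Y'$, and handle the disconnected case $Y'=\varnothing$ separately. The paper instead takes a double occurrence word $W$ for $G$, chops it into maximal alternating blocks $W_{1}W_{2}\cdots W_{2m}$ with odd-indexed blocks in $X$ and even-indexed blocks in $Y$, and explicitly assembles words for $G_{X}$ and $G_{Y}$ by concatenating the appropriate blocks and inserting markers. Your route is shorter and conceptually cleaner; the paper's route is more constructive, in that it shows concretely how a chord diagram for $G$ decomposes into chord diagrams for $G_{X}$ and $G_{Y}$, which is the form needed for Bouchet's recognition algorithm.
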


\begin{proof}
If $G_{X}=\mathcal{I}(W_{1}y_{0}W_{3}y_{0})$ and $G_{Y}=\mathcal{I}(W_{2}%
x_{0}W_{4}x_{0})$ then $G=\mathcal{I}(W_{1}W_{2}W_{3}W_{4})$.

Suppose conversely that $G$ is a circle graph and $G=\mathcal{I}(W)$. After a
cyclic permutation, we may presume that $W$ begins with an element of $X$, and
ends with an element of $Y$. Then there is a unique way to write $W$ as
$W_{1}W_{2}...W_{2m}$ so that every $W_{i}$ with $i$ odd is nonempty and
contains only letters from $X$, while every $W_{i}$ with $i$ even is nonempty
and contains only letters from $Y$. If no edge of $G$ connects $X$ to $Y$ then
$G_{X}=\mathcal{I}(y_{0}y_{0}W_{1}W_{3}...W_{2m-1})$ and $G_{Y}=\mathcal{I}%
(x_{0}x_{0}W_{2}W_{4}...W_{2m})$. Otherwise, let $xy$ be an edge of $G$ with
$x\in X$ and $y\in Y$. After cyclic permutation we may presume that $x$
appears in $W_{1}$ and $W_{2i-1}$, and $y$ appears in $W_{2j}$ and $W_{2k}$,
with $i>1$ and $j<k$. The fact that $xy$ is an edge implies that $1\leq
j<i\leq k$. Then $G_{X}=\mathcal{I}(W_{1}...W_{2j-1}y_{0}W_{2j+1}%
...W_{2k-1}y_{0}W_{2k+1}$ $...W_{2m-1})$ and $G_{Y}=\mathcal{I}(x_{0}%
W_{2}...W_{2i-2}x_{0}W_{2i}...W_{2m}).$
\end{proof}

\section{Local complementation}

\begin{definition}
If $v$ is a vertex of a simple graph $G$ then the \emph{local complement
}$G^{v}$ is the graph obtained from $G$ by reversing the adjacency status of
every pair of neighbors of $v$. A graph that can be obtained from $G$ through
some sequence of local complementations is \emph{locally equivalent} to $G$.
\end{definition}

That is, $G^{v}$ includes the same edges $vw$ and $wx$ as $G$, so long as
$x\notin N(v)$; but if $y\neq z\in N(v)$ then $yz\in E(G^{v})$ if and only if
$yz\notin E(G)$.

Local complementation is important in the theory of circle graphs because the
following propositions indicate that inductive proofs involving prime circle
graphs can be set up using local complementation. The first two appeared in
Bouchet's discussion of his circle graph recognition algorithm \cite{Bec}.

\begin{proposition}
\cite{Bec} If $v\in V(G)$ and $(X,Y)$ is a partition of $V(G)$ then $(X,Y)$ is
a split of $G$ if and only if $(X,Y)$ is a split of $G^{v}$.
\end{proposition}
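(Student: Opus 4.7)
The plan is to prove only the forward direction; the reverse direction follows immediately because local complementation is an involution, $(G^v)^v=G$. So suppose $(X,Y)$ is a split of $G$, witnessed by subsets $X'\subseteq X$ and $Y'\subseteq Y$. Swapping the roles of $X$ and $Y$ if necessary, I may assume $v\in X$. The argument then splits on whether $v\in X'$.

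If $v\notin X'$, then the split hypothesis forces $v$ to have no neighbors in $Y$ (since only vertices of $X'$ are connected to $Y$). Hence $N(v)\subseteq X$, and the local complementation at $v$ modifies only edges inside $X$. In particular, the set of cross edges between $X$ and $Y$ is unchanged, so $(X',Y')$ still witnesses $(X,Y)$ as a split of $G^v$.

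If instead $v\in X'$, the split hypothesis forces $N(v)\cap Y$ to equal $Y'$ exactly: $v$ is adjacent to every $y\in Y'$, and adjacent to no $y\in Y\setminus Y'$. Let $A=N(v)\cap X$ and set $X''=X'\,\triangle\,A$ (symmetric difference). I would verify directly, by checking each cross edge $xy$ with $x\in X$ and $y\in Y$, that $xy\in E(G^v)$ if and only if $x\in X''$ and $y\in Y'$. The only edges affected by the complementation are those inside $N(v)=A\cup Y'$; for $y\notin Y'$ no cross edge at $y$ is touched, so such $y$ still has no neighbors in $X$. For $y\in Y'$, a cross edge $xy$ is toggled precisely when $x\in A$, while $xy\in E(G)$ precisely when $x\in X'$; hence $xy\in E(G^v)$ iff $x\in X'\,\triangle\,A=X''$. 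This exhibits $(X,Y)$ as a split of $G^v$ witnessed by $(X'',Y')$.

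The main obstacle is essentially bookkeeping rather than a conceptual difficulty: one must notice the key observation that membership of $v$ in $X'$ forces $N(v)\cap Y$ to equal all of $Y'$, since it is this rigidity that makes the cross edges at each $y\in Y'$ transform uniformly (by toggling exactly over $A$) and thus allows a single new witness $X''$ on the $X$-side to work for every $y\in Y'$ simultaneously.
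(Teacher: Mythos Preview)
Your argument is correct: the involution $(G^v)^v=G$ reduces the claim to one direction, and your two-case analysis (according to whether $v\in X'$) handles that direction cleanly, with the key observation being that when $v\in X'$ one has $N(v)\cap Y=Y'$, so the cross edges at each $y\in Y'$ are toggled uniformly over $A=N(v)\cap X$ and the new witness $X''=X'\,\triangle\,A$ works. The paper itself does not give a proof of this proposition; it simply quotes the result from Bouchet~\cite{Bec}, so there is no in-paper argument to compare your approach against.
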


\begin{proposition}
\cite{Bec} If $G$ and $H$ are locally equivalent then $G$ is a circle graph if
and only if $H$ is a circle graph.
\end{proposition}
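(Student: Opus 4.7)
The plan is to reduce to a single local complementation and to exhibit an explicit modification of the defining double occurrence word. Since $(G^v)^v = G$, it suffices to show that if $G = \mathcal{I}(W)$ and $v \in V(G)$, then $G^v$ is also a circle graph; the general case then follows by induction on the length of a sequence of local complementations realizing the local equivalence between $G$ and $H$.

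The construction I have in mind is the following. Cyclically permute $W$ so that it begins with an occurrence of $v$, and write $W = vAvB$. Let $W' = vA^R vB$, where $A^R$ denotes the word obtained by reversing $A$. The aim is to prove that $\mathcal{I}(W') = G^v$.

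A vertex $x \neq v$ is interlaced with $v$ in $W$ if and only if exactly one occurrence of $x$ lies in $A$ and the other in $B$, a condition unaffected by reversing $A$. Hence the neighborhood of $v$ in $\mathcal{I}(W')$ is the same as in $G$, and every edge incident to $v$ is preserved. For a pair of distinct non-$v$ vertices $y$ and $z$, I plan a case analysis based on how many of them are neighbors of $v$. If neither is a neighbor of $v$, then each of $y, z$ has both occurrences entirely within $A$ or entirely within $B$. In the subcase that both pairs lie in $A$, reversing $A$ reverses the entire four-letter arrangement of occurrences, which is a symmetry of the interlacement relation. In the subcase that both pairs lie in $B$, reversing $A$ has no effect on the occurrences. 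In the remaining subcase, one pair lies in $A$ and the other in $B$, so the two pairs are separated by $v$ and $y, z$ are not interlaced in either $W$ or $W'$. If exactly one of $y, z$ is a neighbor of $v$, a short enumeration of the possible positions of its $A$-occurrence relative to the other vertex's occurrences (which all lie on the same side of the two $v$'s) shows that interlacement is again preserved.

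The crucial case is when both $y$ and $z$ are neighbors of $v$, so each contributes one occurrence $y_A, z_A$ to $A$ and one occurrence $y_B, z_B$ to $B$. Here the interlacement of $y$ with $z$ in $W$ is determined by whether the relative order of $y_A$ and $z_A$ in $A$ agrees with the relative order of $y_B$ and $z_B$ in $B$. Reversing $A$ swaps the order of $y_A$ and $z_A$ while leaving $B$ intact, and so toggles the interlacement between $y$ and $z$. This is exactly the effect of local complementation at $v$. The main obstacle is the bookkeeping in this last case: I would need to tabulate the four combinations of relative orderings to confirm the toggle, but the mechanics are elementary. Together, the three cases give $\mathcal{I}(W') = G^v$, completing the argument.
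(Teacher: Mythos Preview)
Your argument is correct. The reduction to a single local complementation is justified by $(G^{v})^{v}=G$, and the reversal trick $W=vAvB\mapsto W'=vA^{R}vB$ is exactly the classical construction; your case analysis handles all situations accurately, including the key case where both $y$ and $z$ are neighbors of $v$ and the relative order of their $A$-occurrences flips while their $B$-occurrences stay fixed, toggling interlacement.

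Note, however, that the paper does not actually prove this proposition: it is quoted from Bouchet \cite{Bec} and stated without argument, as a known background result. Your proof is essentially Bouchet's original one, so there is no meaningful methodological difference to compare; you have simply supplied what the paper chose to cite.
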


The next proposition appeared in Gasse's derivation of Naji's theorem \cite{G}.

\begin{proposition}
\label{localn}\cite{G} If $G$ and $H$ are locally equivalent then $G$ is a
Naji graph if and only if $H$ is a Naji graph.
\end{proposition}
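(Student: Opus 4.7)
The plan is to exploit the fact that local equivalence is generated by individual local complementations, together with the identity $(G^v)^v=G$: it suffices to show that whenever $\beta\in\mathcal{B}(G)$ and $v\in V(G)$, one can write down a $\beta^v\in\mathcal{B}(G^v)$. Since the edges of $G^v$ differ from those of $G$ only on pairs whose endpoints both lie in $N(v)$, the natural ansatz is $\beta^v(x,y)=\beta(x,y)+c(x,y;v,\beta)$, where the correction $c$ vanishes unless both $x$ and $y$ are in $N(v)$ and is otherwise an expression built out of the four values $\beta(v,x),\beta(v,y),\beta(x,v),\beta(y,v)$. The form of $c$ can be discovered by comparing the circle-word Naji solution of Proposition \ref{circleN} for $AvBvC$ with the one obtained from $AvB^RvC$, which is the word operation that realizes local complementation at $v$.

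With a candidate formula fixed, the verification examines each of the three types of Naji equation for $G^v$ in turn. Type (a) for edges of $G$ that survive to $G^v$ is immediate, since the correction vanishes on such pairs. Type (a) for newly created edges $xy\in E(G^v)\setminus E(G)$ (necessarily with $x,y\in N(v)$ and $xy\notin E(G)$) follows from the type (c) equation of $G$ centered at $v$ together with the two type (a) equations for $vx$ and $vy$. Type (b) and type (c) equations in $G^v$ are then split into cases according to where each of the three vertices involved sits relative to $v$ (equal to $v$, in $N(v)$, or in $V\setminus N[v]$) and according to which of the adjacencies among them are preserved or toggled by the local complementation. In each subcase one plugs $\beta^v$ into the equation, invokes an appropriate combination of Naji equations for $G$ (often combined with the observation that $\beta(x,-)$ is constant on connected components of $G-N(x)$), and checks that the correction terms cancel modulo $2$.

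The main obstacle will be the bookkeeping of the type (c) cases: with three non-center vertices plus a center, there are a considerable number of membership configurations in $\{v\}$, $N(v)$, and $V\setminus N[v]$, and the correction term is naturally not symmetric in the two arguments of $\beta^v$. A secondary subtlety is that a formula derived from the circle-word comparison may not immediately succeed for \emph{every} $\beta\in\mathcal{B}(G)$; if a particular $\beta$ resists the given formula, one can first replace it by $\beta+\sum_{w\in X}\delta(w)+\varepsilon\rho$ for a suitable $X\subseteq V(G)$ and $\varepsilon\in\{0,1\}$ using Proposition \ref{solution} and Corollary \ref{solutions}, moving to a Naji solution on which the formula does apply. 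Once the construction of $\beta^v$ has been verified in every case, the reverse implication follows by symmetry from $(G^v)^v=G$.
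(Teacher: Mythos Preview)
Your overall plan --- reduce to a single local complementation, normalize the given Naji solution using Corollary~\ref{solutions}, write down an explicit $\beta^v$, and check the three types of Naji equations case by case --- is precisely the paper's approach; the paper in fact just states the formula and leaves the verification to the reader.

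The genuine gap is your ansatz. You assert that the correction $c(x,y;v,\beta)$ vanishes unless \emph{both} $x$ and $y$ lie in $N(v)$, but this cannot be achieved even after the normalizations you allow. Take $V(G)=\{v,x,y,z,z'\}$ with $E(G)=\{vx,vy,xz,xz'\}$, so $N(v)=\{x,y\}$. One checks directly that $G$ admits a Naji solution $\beta$ with $\beta(x,z)=\beta(x,z')=\beta(y,z)=\beta(y,z')=\beta(z',y)=0$ and $\beta(z,y)=1$. In $G^v$ the edge $xy$ is present, and the two type~(c) equations centred at $x$ for the pairs $(y,z)$ and $(y,z')$ read
\[
\beta^v(x,y)+\beta^v(x,z)+\beta^v(y,z)+\beta^v(z,y)=1,\qquad
\beta^v(x,y)+\beta^v(x,z')+\beta^v(y,z')+\beta^v(z',y)=1.
\]
If the six values other than $\beta^v(x,y)$ are pinned to the corresponding $\beta$-values, these force $\beta^v(x,y)=0$ and $\beta^v(x,y)=1$ simultaneously. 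Moreover, the sum $\beta(z,z')+\beta(z',z)+\beta(y,z)+\beta(y,z')+\beta(z,y)+\beta(z',y)$, which must equal $1$ for your ansatz to be consistent here, is unchanged by adding any combination of $\rho$ and the $\delta(w)$; so the normalization step cannot rescue it.

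The paper's actual formula, applied after normalizing so that $\beta(w,v)=0$ exactly when $w\in N(v)$, is
\[
\beta^v(x,y)=
\begin{cases}
\beta(x,y)+\beta(v,y) & \text{if }x\in N(v),\\
\beta(x,y)+\beta(v,x) & \text{if }x\notin N(v),
\end{cases}
\]
with the convention $\beta(v,v)=0$. This correction touches \emph{every} ordered pair, not just those in $N(v)\times N(v)$. Your own circle-word heuristic $AvBvC\mapsto AvB^{R}vC$ would have revealed this: a vertex with both occurrences inside $B$ lies outside $N(v)$, yet reversing $B$ effectively swaps its initial and terminal appearances, so the corresponding $\beta$-values do change.
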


\begin{proof}
Suppose $G$ is a Naji graph. According to Corollary \ref{solutions}, $G$ has a
Naji solution $\beta$ such that $\beta(x,v)=0$ if and only if $xv\in E(G)$. A
Naji solution for $G^{v}$ may then be defined by
\[
\beta^{v}(x,y)=\left\{
\begin{array}
[c]{c}%
\beta(x,y)+\beta(v,y)\text{ if }x\in N(v)\\
\beta(x,y)+\beta(v,x)\text{ if }x\notin N(v)\text{,}%
\end{array}
\ \ \right.
\]
with the understanding that $v\notin N(v)$ and $\beta(v,v)=0.$
\end{proof}

We can say a little more.

\begin{proposition}
\label{localn2}If $v\in V(G)$ then $\left\vert \mathcal{B}(G)\right\vert
=\left\vert \mathcal{B}(G^{v})\right\vert $.
\end{proposition}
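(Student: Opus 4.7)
The plan is to promote the formula used in the proof of Proposition \ref{localn} from a single-point construction to an explicit bijection $\Phi_v \colon \mathcal{B}(G)\to\mathcal{B}(G^v)$. For an arbitrary $\beta\in\mathcal{B}(G)$ I would set
\[
\Phi_v(\beta)(x,y)=\begin{cases}\beta(x,y)+\beta(v,y) & \text{if } x\in N(v),\\ \beta(x,y)+\beta(v,x) & \text{if } x\notin N(v),\end{cases}
\]
with the same conventions $v\notin N(v)$ and $\beta(v,v)=0$ used in Proposition \ref{localn}.

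The first and more substantive step is to verify that $\Phi_v(\beta)\in\mathcal{B}(G^v)$ for \emph{every} $\beta\in\mathcal{B}(G)$, not only the distinguished $\beta$ produced by Corollary \ref{solutions} in the proof of Proposition \ref{localn}. I would carry this out by a systematic case analysis on each Naji equation of $G^v$, splitting according to which of the three vertices involved lie in $N_G(v)$, equal $v$, or lie outside $N_G(v)\cup\{v\}$. Since an edge $xy$ of $G$ is reversed in $G^v$ precisely when both endpoints lie in $N_G(v)$, each equation of $G^v$ reduces, after substituting the formula, to an identity that should follow from one or two Naji equations of $G$. The cases in which at most one of the three vertices lies in $N_G(v)$ each collapse to a single Naji equation of $G$ of the same type; the main obstacle will be a type (c) equation of $G^v$ in which two of the three vertices lie in $N_G(v)$, because there the identity to be established is not itself a Naji equation of $G$ but appears to be a sum of a type (c) equation and a type (b) equation of $G$ (with $v$ playing the role of the isolated vertex), possibly in the variant forms mentioned after Definition \ref{najieq}.

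The second step is a short algebraic check that $\Phi_v$ is an involution, using the identity $N_{G^v}(v)=N_G(v)$. For $x\in N(v)$ one computes
\[
\Phi_v(\Phi_v(\beta))(x,y)=\Phi_v(\beta)(x,y)+\Phi_v(\beta)(v,y)=\bigl[\beta(x,y)+\beta(v,y)\bigr]+\beta(v,y)=\beta(x,y),
\]
where $\Phi_v(\beta)(v,y)=\beta(v,y)$ because $v\notin N(v)$ and $\beta(v,v)=0$; a parallel computation disposes of the case $x\notin N(v)$. Thus $\Phi_v\circ\Phi_v$ is the identity on $\mathcal{B}(G)$, so $\Phi_v$ is a bijection and $|\mathcal{B}(G)|=|\mathcal{B}(G^v)|$, which also handles the trivial case in which both sets are empty (guaranteed to occur simultaneously by Proposition \ref{localn}).
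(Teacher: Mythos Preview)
Your proposal is correct and takes a genuinely different route from the paper. The paper does \emph{not} apply the formula of Proposition~\ref{localn} to every $\beta$; instead it restricts to the subset $\mathcal{B}_0(G)\subseteq\mathcal{B}(G)$ of solutions with $\beta(x,v)=0\Leftrightarrow x\in N(v)$, shows $|\mathcal{B}(G)|=2^{n-1}|\mathcal{B}_0(G)|$, applies the formula only on $\mathcal{B}_0(G)$, corrects the output with suitable $\delta_{G^v}(y)$'s to land back in $\mathcal{B}_0(G^v)$, and then argues injectivity of the resulting map. You instead assert that the formula already gives a Naji solution of $G^v$ for \emph{every} $\beta\in\mathcal{B}(G)$, and then observe that the map is an involution. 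Your approach is cleaner and yields an explicit bijection on all of $\mathcal{B}(G)$; the cost is that your first step genuinely needs the full case analysis you describe, since the paper (following Gasse) only claims the formula works for the normalized $\beta$. That case analysis does go through---each Naji equation of $G^v$ reduces to a combination of at most three Naji equations of $G$, with the trickiest cases (e.g.\ a type~(c) equation with two of the three vertices in $N(v)$ and the third outside) requiring two type~(c) equations of $G$ together with one or two type~(b) equations involving $v$. Both arguments ultimately exploit $(G^v)^v=G$.
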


\begin{proof}
If $G$ is not a Naji graph then Proposition \ref{localn} tells us that $G^{v}$
is not a Naji graph either.

Suppose $G$ is a Naji graph, and let $\mathcal{B}_{0}(G)$ be the set that
includes all the Naji solutions $\beta$ of $G$ with the property that
$\beta(x,v)=0$ if and only if $x\in N(v)$. Suppose $\beta$ is an arbitrary
Naji solution of $G$. Let $X=\{x\in N(v)\mid\beta(x,v)=1\}$ and $Y=\{y\notin
N(v)\cup\{v\}\mid\beta(y,v)=0\}$. Then%
\[
\beta+\sum_{x\in X}\delta_{G}(x)+\sum_{y\in Y}\delta_{G}(y)\in\mathcal{B}%
_{0}(G).
\]
Moreover, if $W\subseteq V(G-v)$ is any subset other than $X\cup Y$ then
\[
\beta+\sum_{w\in W}\delta_{G}(w)\not \in \mathcal{B}_{0}(G).
\]
As no two subsets $W\subseteq V(G-v)$ yield the same sum $\sum_{w\in W}%
\delta_{G}(w)$, we conclude that $\left\vert \mathcal{B}(G)\right\vert
=\left\vert \mathcal{B}_{0}(G)\right\vert \cdot2^{\left\vert V(G)\right\vert
-1}$. The same argument applies to $G^{v}$, so it suffices to prove that
$\left\vert \mathcal{B}_{0}(G^{v})\right\vert =\left\vert \mathcal{B}%
_{0}(G)\right\vert $.

Suppose $\beta\in\mathcal{B}_{0}(G)$, and let $\beta^{v}$ be the Naji solution
of $G^{v}$ defined in Proposition \ref{localn}. Notice that if $x\in N(v)$,
then $\beta^{v}(x,v)=\beta(x,v)=0$. Also, if $y\notin N(v)\cup\{v\}$ then
$\beta^{v}(y,v)=\beta(y,v)+\beta(v,y)=1+\beta(v,y)$. Consequently if we let
$Y_{\beta}=\{y\notin N(v)\cup\{v\}\mid\beta(v,y)=1\}$ then
\[
f(\beta)\equiv\beta^{v}+\sum_{y\in Y_{\beta}}\delta_{G^{v}}(y)\in
\mathcal{B}_{0}(G^{v}).
\]

We claim that $f:\mathcal{B}_{0}(G)\rightarrow\mathcal{B}_{0}(G^{v})$ is
injective, and consequently $\left\vert \mathcal{B}_{0}(G^{v})\right\vert
\geq\left\vert \mathcal{B}_{0}(G)\right\vert $. As $G=(G^{v})^{v}$, the claim
suffices to complete the proof.

Suppose $\beta,\beta^{\prime}\in\mathcal{B}_{0}(G)$. If $Y_{\beta}\neq
Y_{\beta^{\prime}}$, there is a $z\notin N(v)\cup\{v\}$ with $\beta
(v,z)\not =\beta^{\prime}(v,z)$. Then $\beta^{v}(v,z)=\beta(v,z)\neq
(\beta^{\prime})^{v}(v,z)=\beta^{\prime}(v,z)$. Moreover there is no $y\in
Y_{\beta}\cup Y_{\beta^{\prime}}$ such that $\delta_{G^{v}}(y)$ has a nonzero
$(v,z)$ coordinate, because $v\notin N(z)$ and $v\notin Y_{\beta}\cup
Y_{\beta^{\prime}}$. Consequently $f(\beta)(v,z)\neq f(\beta^{\prime})(v,z)$.

Now, suppose $\beta,\beta^{\prime}\in\mathcal{B}_{0}(G)$ and $f(\beta
)=f(\beta^{\prime})$. As we just saw, $f(\beta)=f(\beta^{\prime})$ requires
$Y_{\beta}=Y_{\beta^{\prime}}$, i.e., $\beta(v,y)=\beta^{\prime}(v,y)$
$\forall y\notin N(v)\cup\{v\}$. As $\beta(y,v)=0=\beta^{\prime}(y,v)$
$\forall y\in N(v)$, the Naji equations imply $\beta(v,y)=1=\beta^{\prime
}(v,y)$ $\forall y\in N(v)$. Consequently $\beta(v,y)=\beta^{\prime}(v,y)$
$\forall y\neq v$. The equalities $f(\beta)=f(\beta^{\prime})$ and $Y_{\beta
}=Y_{\beta^{\prime}}$ imply $\beta^{v}=(\beta^{\prime})^{v}$, and the
definition of $\beta^{v}$ in Proposition \ref{localn} makes it clear that the
equalities $\beta^{v}=(\beta^{\prime})^{v}$ and $\beta(v,y)=\beta^{\prime
}(v,y)$ $\forall y\neq v$ imply $\beta=\beta^{\prime}$.
\end{proof}

The next proposition is more difficult; it was first proved by Bouchet using
isotropic systems \cite{Bec, Bi2}.

\begin{proposition}
\label{deletion}\cite{Bec} If $G$ is prime and $\left\vert V(G)\right\vert >5$
then there is a locally equivalent graph $H$ with a vertex $v$ such that $H-v$
is prime.
\end{proposition}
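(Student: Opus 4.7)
The plan is to argue by contradiction. Take $G$ to be a prime graph of minimum order exceeding $5$ for which no locally equivalent $H$ admits a vertex $v$ with $H - v$ prime. Over all $H$ locally equivalent to $G$, all $v \in V(H)$, and all splits $(X, Y)$ of $H - v$ with $|X| \le |Y|$, choose a triple minimizing $|X|$, and let $X' \subseteq X$, $Y' \subseteq Y$ be the associated split neighborhoods.

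First I would record the constraint from primeness. Since $H$ itself is prime, neither $(X \cup \{v\}, Y)$ nor $(X, Y \cup \{v\})$ is a split of $H$, which forces $N_H(v) \cap X$ to be neither empty nor equal to $X'$ and $N_H(v) \cap Y$ to be neither empty nor equal to $Y'$. Classifying vertices of $X$ by whether they lie in $X'$ and whether they are adjacent to $v$ produces up to four potential types, and the constraints above force more than one of these types to be populated; the same holds on the $Y$-side.

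Next I would reduce to $|X| = 2$. If $|X| \geq 3$, I would pick a suitably typed vertex $u \in X$ and locally complement either at $u$ or at some vertex of $Y$; the goal is to produce a locally equivalent $H^*$ in which $(X \setminus \{u\}, Y \cup \{u\})$ is a split of $H^* - v$, yielding a triple with strictly smaller small side and contradicting the minimizing choice. The base case $|X| = 2$, with $X = \{x_1, x_2\}$, is then a finite case analysis on the edge $x_1 x_2$ and the adjacencies of $v$ to $x_1, x_2$. Because $|V(G)| \geq 6$ forces $|Y| \geq 4$, there is room to locally complement at $v$, $x_1$, $x_2$, or a well-chosen vertex of $Y$ to either exhibit some locally equivalent $H^\prime$ with $H^\prime - v^\prime$ prime, contradicting the defining property of $G$, or to produce a smaller prime graph locally equivalent to a one-vertex deletion of $G$, contradicting the minimality of $|V(G)|$.

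The main obstacle is the reduction step. Local complementation at a vertex $u \in X'$ does not merely toggle edges inside $X$: because $N_H(u) \cap Y = Y'$ in full, it also flips adjacencies among all pairs of vertices in $Y'$, and, worse, flips adjacencies between $N_H(u) \cap X$ and $Y'$, which rewires cross-partition edges and can destroy the split $(X, Y)$ entirely. Controlling these side-effects demands a delicate type-by-type choice of $u$, and it is precisely to bypass this combinatorial friction that Bouchet's original argument passes to isotropic systems, where splits become direct-sum decompositions and local complementations become changes of basis, converting the existence statement we need into a much cleaner linear-algebraic one.
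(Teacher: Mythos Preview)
The paper does not give its own proof of this proposition: it attributes the result to Bouchet's isotropic-systems argument and points the reader to Geelen's thesis for a stronger version (Proposition~\ref{deletion2}) with a direct proof. So there is no in-paper proof to compare against; the relevant benchmarks are Bouchet's and Geelen's arguments.

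Your proposal is not a proof but a plan, and you yourself identify the gap. The two substantive steps --- the reduction from $|X|\ge 3$ to $|X|=2$, and the ``finite case analysis'' when $|X|=2$ --- are both asserted rather than carried out. In the reduction step you correctly note that local complementation at $u\in X'$ rewires edges across the partition and inside $Y'$, which can destroy the split; you do not show how to choose $u$ (or a sequence of local complementations) so that a split with smaller $X$-side survives. In the base case you claim that with $|Y|\ge 4$ there is ``room'' to locally complement and reach a contradiction, but no specific moves or case distinctions are given, and it is not clear what the induction on $|V(G)|$ is supposed to buy, since local equivalence and vertex deletion do not commute in any simple way.

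That said, the overall strategy --- minimal counterexample, analyze a minimal split of $H-v$, use primeness of $H$ to constrain $N_H(v)$ relative to $X',Y'$, then locally complement to shrink the split --- is in the spirit of Geelen's direct argument, which does succeed without isotropic systems and in fact yields the sharper conclusion that one need only complement once at a degree-$2$ vertex. If you want to complete a proof along these lines, consulting \cite[Corollary 5.10]{GeelenPhD} would show you exactly which local complementations to perform and how the case analysis closes; your final paragraph is right that Bouchet's route through isotropic systems trades this combinatorial bookkeeping for linear algebra, but Geelen's proof demonstrates that the bookkeeping can be done.
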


A refined form of Proposition \ref{deletion} appears in Geelen's thesis
\cite{GeelenPhD}, which is freely available online. The reader who has not
already encountered Proposition \ref{deletion} is encouraged to read Geelen's
account, as the result is stronger and the proof does not require isotropic systems.

\begin{proposition}
\label{deletion2}\cite[Corollary 5.10]{GeelenPhD} If $G$ is prime and
$\left\vert V(G)\right\vert >5$ then either $G$ has a vertex $v$ such that
$G-v$ is prime, or $G$ has a degree-2 vertex $v$ such that $G^{v}-v$ is prime.
\end{proposition}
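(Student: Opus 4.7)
The plan is to refine Proposition \ref{deletion} by a minimality argument. That proposition already provides a graph $H$ locally equivalent to $G$ and a vertex $v$ with $H-v$ prime. Among all such pairs $(H,v)$, choose one for which the sequence $G=H_{0},H_{1},\ldots,H_{k}=H$ of local complementations (with $H_{i}=H_{i-1}^{u_{i}}$) from $G$ to $H$ has minimum possible length $k$. If $k=0$ then $G-v$ itself is prime, which is the first alternative of the conclusion. So I may assume $k\geq 1$; the goal is then to force $k=1$, $u_{1}=v$, and $\deg_{G}(v)=2$.

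\textbf{First step: the last complementation is at $v$.} The key identity is that for any $u\neq v$, local complementation at $u$ commutes with deletion of $v$: $(G^{u})-v=(G-v)^{u}$, because local complementation only toggles edges among pairs of neighbors of $u$, and $v$ is either not among them or gets deleted. Iterating, if none of $u_{1},\ldots,u_{k}$ equals $v$ then $H-v=(G-v)^{u_{1}\cdots u_{k}}$, which is locally equivalent to $G-v$; since splits are preserved by local complementation, $G-v$ would then already be prime, contradicting $k\geq 1$. Hence some $u_{j}=v$. Taking the largest such $j$ and commuting the later complementations $u_{j+1},\ldots,u_{k}$ (all different from $v$) past the deletion exhibits $H-v$ as $(H_{j}-v)^{u_{j+1}\cdots u_{k}}$, so $H_{j}-v$ is prime too. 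By minimality of $k$ this forces $j=k$, i.e., $u_{k}=v$ and $H=H_{k-1}^{v}$.

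\textbf{Main obstacle: forcing $k=1$ and $\deg_{G}(v)=2$.} At this point $H_{k-1}$ is locally equivalent to $G$ via a sequence of length $k-1$, the graph $H_{k-1}-v$ is not prime (else $u_{k}$ could be dropped), while $H_{k-1}^{v}-v$ is. Local complementation at $v$ only toggles edges among pairs of vertices in $N_{H_{k-1}}(v)$, so every split of $H_{k-1}-v$ must be destroyed by an edge-toggle confined to that neighborhood. The plan is (i) to deduce from this, together with primeness of $H_{k-1}$, that $|N_{H_{k-1}}(v)|=2$; and (ii) to show $k=1$, so that $H_{k-1}=G$ and the degree-$2$ condition transfers to $G$ itself. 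Step (i) should come from a careful split-decomposition analysis of $H_{k-1}-v$: a large neighborhood would leave enough flexibility for at least one split to survive a toggle within it, contradicting primeness of $H_{k-1}^{v}-v$; for the two-vertex case, toggling the single possible edge is a maximally restrictive move that can plausibly kill all splits simultaneously.

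Step (ii) is the delicate part and, I expect, the principal obstacle. The intuition is that any earlier complementation at some $u_{i}\neq v$ ought to be ``absorbable'' into a single complementation at $v$ applied directly to $G$, but converting that intuition into a clean argument requires tracking how the neighborhood $N(v)$ and the splits of the evolving $H_{i}-v$ change step by step, and verifying that no configuration genuinely requires $k\geq 2$. This combinatorial case analysis --- essentially the content of Geelen's thesis argument --- is where the bulk of the work lies, and is likely where isotropic systems or the matroidal framework in \cite{GeelenPhD} streamline what would otherwise be an unwieldy graph-theoretic case check.
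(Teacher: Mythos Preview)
The paper does not actually prove Proposition~\ref{deletion2}; it merely cites it from Geelen's thesis \cite{GeelenPhD} and refers the reader there. So there is no ``paper's own proof'' to compare against, and your proposal must be judged on its own merits.

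Your first step is fine: the commutation $(G^{u})-v=(G-v)^{u}$ for $u\neq v$ is correct, and the argument that the final complementation in a minimal sequence must be at $v$ is valid. But from that point on the proposal is an outline with the two decisive claims left open, and both are genuine gaps rather than routine details.

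For Step~(i), the assertion that primeness of $H_{k-1}$ together with ``$H_{k-1}-v$ not prime but $H_{k-1}^{v}-v$ prime'' forces $\lvert N_{H_{k-1}}(v)\rvert=2$ is not established. The heuristic that a large neighborhood ``leaves enough flexibility for at least one split to survive'' is not an argument; one would need a concrete structural lemma about how toggling all edges inside a fixed vertex set can destroy every split of a graph, and you have not supplied one.

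Step~(ii) is the more serious problem, and you correctly flag it. There is no mechanism in your setup that forces $k=1$. Minimality of $k$ has already been fully exploited to get $u_{k}=v$; it gives nothing further about $u_{1},\ldots,u_{k-1}$. The hope that earlier complementations can be ``absorbed'' into a single complementation at $v$ is unsupported: local complementations at distinct vertices do not in general commute or compose into a single one, and the degree of $v$ can change along the sequence, so even if $\deg_{H_{k-1}}(v)=2$ there is no reason $\deg_{G}(v)=2$. More fundamentally, the proposition does not assert that the \emph{same} vertex $v$ chosen via your minimal pair $(H,v)$ works in $G$; it only promises that \emph{some} vertex of $G$ has one of the two properties. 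A minimality argument pinned to a single $v$ is therefore aiming at a stronger statement than what is claimed, and may simply be false in that form.

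In short, the proposal reduces the proposition to two subclaims, neither of which is proved, and the second of which (forcing $k=1$) is likely not provable as stated. Geelen's argument in \cite{GeelenPhD} proceeds by a direct structural analysis rather than through this kind of sequence-length minimality, and that analysis is not reproduced here.
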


\section{Examples}

In this section we discuss some examples of the behavior of the Naji equations.

\subsection{Complete graphs}

The easiest Naji graphs to analyze are the complete graphs. If $n\geq2$ then
for each pair of vertices $v\neq w\in V(K_{n})$, either of $\beta(v,w)$,
$\beta(w,v)$ may be 1, and the other must be 0. Consequently $\left\vert
\mathcal{B}(K_{n})\right\vert =2^{n(n-1)/2}$.

Complete graphs exemplify a comment of Geelen mentioned in the introduction:
For $n\geq4$, $K_{n}$ has Naji solutions that do not come from Proposition
\ref{circleN}. To see why, notice that according to Definition \ref{interlace}%
, if $W$ is a double occurrence word with $\mathcal{I}(W)=K_{n}$ then the
vertices of $K_{n}$ can be ordered so that $W=v_{1}...v_{n}v_{1}...v_{n}$.
Suppose $W$ is oriented in such a way that the corresponding Naji solution has
$\beta(v_{1},v_{i})=1$ $\forall i>1$. Then the oriented version of $W$ must be
$v_{1}^{in}...v_{n}^{in}v_{1}^{out}...v_{n}^{out}$ or $v_{1}^{out}%
...v_{n}^{out}v_{1}^{in}...v_{n}^{in}$. It follows that $\beta(v_{n},v_{i})=0$
$\forall i<n$. We see that all Naji solutions of $K_{n}$ which arise from
Proposition \ref{circleN} have this property:

\begin{quote}
If there is a vertex $v$ such that $\beta(v,x)=1$ $\forall x\neq v$, then
there is also a vertex $w$ such that $\beta(w,x)=0$ $\forall x\neq w$.
\end{quote}

For $n\geq4$ there are many\ Naji solutions of $K_{n}$ that do not satisfy
this property. For instance, one such solution has $\beta(v_{i},v_{j})=1$
whenever $i<j$, except that $\beta(v_{n-1},v_{n})=0$ and $\beta(v_{n}%
,v_{n-1})=1$.

\subsection{Cycle graphs}

Another interesting class of Naji graphs includes the cycle graphs $C_{n}$,
with $n\geq4$. We index the vertices $v_{1},...,v_{n}$ in the usual way, so
that $N(v_{i})=\{v_{i-1},v_{i+1}\}$ for each $i$, with indices considered
modulo $n$. The Naji equations require $\beta(v_{i},v_{i+1})\neq\beta
(v_{i+1},v_{i})$ for each $i$, $\beta(v_{i},v_{j})=\beta(v_{i},v_{i+2})$
whenever $\left\vert j-i\right\vert \geq2$, and $\beta(v_{i-1},v_{i}%
)+\beta(v_{i+1},v_{i})+\beta(v_{i-1},v_{i+1})+\beta(v_{i+1},v_{i-1})=1$ for
each $i$.

It turns out that these equations are dependent. To verify the dependence it
is convenient to use type (a) and (b) equations to rewrite each type (c)
equation $\beta(v_{i-1},v_{i})+\beta(v_{i+1},v_{i})+\beta(v_{i-1}%
,v_{i+1})+\beta(v_{i+1},v_{i-1})=1$ in the equivalent form $0=\beta
(v_{i-1},v_{i})+\beta(v_{i},v_{i+1})+\beta(v_{i-1},v_{i+1})+\beta
(v_{i+1},v_{i+3})$. Then observe that
\begin{align*}
0  &  =\sum_{i=1}^{n-1}(\beta(v_{i-1},v_{i})+\beta(v_{i},v_{i+1}%
)+\beta(v_{i-1},v_{i+1})+\beta(v_{i+1},v_{i+3}))\\
&  =\beta(v_{n-1},v_{n})+\beta(v_{n},v_{1})+\sum_{i=1}^{n-2}2\beta
(v_{i-1},v_{i})+\beta(v_{n-1},v_{1})+\beta(v_{1},v_{3})\\
&  +\sum_{i=2}^{n-2}2\beta(v_{i},v_{i+2})+2\beta(v_{n},v_{2})\\
&  =\beta(v_{n-1},v_{n})+\beta(v_{n},v_{1})+\beta(v_{n-1},v_{1})+\beta
(v_{1},v_{3})\text{,}%
\end{align*}
so the last type (c) equation follows from the other equations.

Now, suppose we have a Naji solution of $C_{n}$. Let $\beta_{i}=\beta
(v_{i},v_{i+2})$ for each $i$, and let $\beta_{0}=\beta(v_{1},v_{2})$. Then
all the other $\beta$ values are determined by $\beta_{0},...,\beta_{n}$:
$\beta(v_{2},v_{1})=1+\beta_{0}$, $\beta(v_{2},v_{3})=1+\beta(v_{2}%
,v_{1})+\beta(v_{1},v_{3})+\beta(v_{3},v_{1})=\beta_{0}+\beta_{1}+\beta_{3}$,
$\beta(v_{3},v_{2})=1+\beta(v_{2},v_{3})$, $\beta(v_{3},v_{4})=1+\beta
(v_{3},v_{2})+\beta(v_{2},v_{4})+\beta(v_{4},v_{2})=\beta_{0}+\beta_{1}%
+\beta_{2}+\beta_{3}+\beta_{4}$, etc. These formulas use every Naji equation
except the last type (c) equation, with each equation used once, so we
conclude that for each choice of values of $\beta_{0},...,\beta_{n}$ there is
exactly one Naji solution of $C_{n}$. It follows that $\left\vert
\mathcal{B}(C_{n})\right\vert =2^{n+1}$.

\subsection{Cycle-pendant graphs}

For $n\geq5$ let $C_{n}^{+}$ denote the graph obtained from $C_{n-1}$ by
adjoining a vertex $v_{0}$ whose only neighbor is $v_{1}$. Given a Naji
solution $\beta$ of $C_{n-1}$, arbitrarily choose values for $\beta
(v_{0},v_{1})$ and $\beta(v_{0},v_{2})$. Then $C_{n}^{+}$ has Naji equations
that require $\beta(v_{1},v_{0})=1+\beta(v_{0},v_{1})$, $\beta(v_{0}%
,v_{i})=\beta(v_{0},v_{2})$ for $2\leq i\leq n-1$, $\beta(v_{i},v_{0}%
)=\beta(v_{i},v_{1})$ for $3\leq i\leq n-2$, $\beta(v_{2},v_{0})=1+\beta
(v_{0},v_{2})+\beta(v_{0},v_{1})+\beta(v_{2},v_{1})$, and
\begin{align*}
\beta(v_{n-1},v_{0})  &  =1+\beta(v_{0},v_{n-1})+\beta(v_{0},v_{1}%
)+\beta(v_{n-1},v_{1})\\
&  =1+\beta(v_{0},v_{2})+\beta(v_{0},v_{1})+\beta(v_{n-1},v_{1})\text{.}%
\end{align*}
These are all the Naji equations of $C_{n}^{+}$ that are not Naji equations of
$C_{n-1}$, so if we are given a Naji solution $\beta$ of $C_{n-1}$, we may
arbitrarily choose values for $\beta(v_{0},v_{1})$ and $\beta(v_{0},v_{2})$,
and then determine a Naji solution of $C_{n}^{+}$ from $\beta(v_{0},v_{1})$,
$\beta(v_{0},v_{2})$ and $\beta$.

Every Naji solution of $C_{n}^{+}$ restricts to a Naji solution of $C_{n-1}$,
of course, and then can be obtained from its restriction in the manner just
described. We conclude that $\left\vert \mathcal{B}(C_{n}^{+})\right\vert
=4\left\vert \mathcal{B}(C_{n-1})\right\vert =2^{n+2}$.

\subsection{Wheel graphs}

The wheel graph $W_{n}$ is obtained from $C_{n}$ by adjoining a single vertex,
adjacent to all the vertices of $C_{n}$. We use the same notation as in the
above discussion of $C_{n}$, with the new vertex denoted $v_{n+1}$. If
$n\geq5$ then for each $i\in\{1,...,n\}$, $W_{n}$ has Naji equations
\begin{align*}
\beta(v_{n+1},v_{i})+\beta(v_{n+1},v_{i+2})+\beta(v_{i},v_{i+2})+\beta
(v_{i+2},v_{i})  &  =1\\
\text{and }\beta(v_{n+1},v_{i})+\beta(v_{n+1},v_{i+3})+\beta(v_{i}%
,v_{i+3})+\beta(v_{i+3},v_{i})  &  =1\text{,}%
\end{align*}
so $\beta(v_{n+1},v_{i})+\beta(v_{n+1},v_{i+2})+\beta_{i}+\beta_{i+2}%
=1=\beta(v_{n+1},v_{i})+\beta(v_{n+1},v_{i+3})+\beta_{i}+\beta_{i+3}$. It
follows that $\beta(v_{n+1},v_{i+2})+\beta_{i+2}=\beta(v_{n+1},v_{i+3}%
)+\beta_{i+3}$ for every $i$; consequently the sum $\beta(v_{n+1},v_{j}%
)+\beta_{j}$ is the same for every $j$. This is impossible, though, as the
Naji equations of $W_{n}$ require%
\begin{align*}
&  \beta(v_{n+1},v_{i})+\beta_{i}+\beta(v_{n+1},v_{i+2})+\beta_{i+2}\\
&  =\beta(v_{n+1},v_{i})+\beta(v_{n+1},v_{i+2})+\beta(v_{i},v_{i+2}%
)+\beta(v_{i+2},v_{i})=1\text{.}%
\end{align*}
We conclude that for $n\geq5$, $W_{n}$ is not a Naji graph.

\section{Step 1 of the proof: uniqueness}

As noted in Proposition \ref{circleN}, every circle graph is a Naji graph; the
interesting part of Naji's theorem is the converse. According to Proposition
\ref{circles}, it suffices to prove the converse for prime Naji graphs. The
first step of our proof is the following uniqueness result.

\begin{theorem}
\label{step1}Let $G$ be a prime Naji graph, and let $\beta_{0}$ be any
particular Naji solution of $G$. Then every other Naji solution of $G$ is%
\[
\beta_{0}+\sum_{s\in S}\delta(s)\text{ or \ }\beta_{0}+\rho+\sum_{s\in
S}\delta(s)
\]
for some subset $S\subseteq V(G)$.
\end{theorem}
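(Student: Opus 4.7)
My plan is induction on $n=|V(G)|$. Combining Proposition \ref{indep} (the set $\{\rho\}\cup\{\delta(s):s\in V(G)\}$ has rank $n+1$ when $G$ is prime of order at least $5$, since such $G$ is not complete, a star, or edgeless) with Proposition \ref{solution}, the two cosets appearing in the theorem together contain exactly $2^{n+1}$ distinct Naji solutions of $G$. The theorem is therefore equivalent to the upper bound $|\mathcal{B}(G)|\le 2^{n+1}$. The small prime cases $n\in\{1,2,3\}$ are trivial, and the base case $n=5$ can be handled by direct enumeration: the computation of $|\mathcal{B}(C_5)|=2^{6}$ in Section 5 illustrates the style, and the other prime graphs of order $5$ can be treated similarly.

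For the inductive step, let $G$ be prime of order $n\ge 6$. Proposition \ref{deletion2} furnishes a vertex $v$ such that either $G-v$ is prime, or $v$ has degree $2$ and $G^{v}-v$ is prime; in the second case Propositions \ref{localn} and \ref{localn2} allow us to replace $G$ by $G^{v}$ and thereby assume without loss of generality that $G-v$ is itself prime. By the inductive hypothesis, $|\mathcal{B}(G-v)|=2^{n}$. The inductive step is then completed by the following key lemma: every $\beta\in\mathcal{B}(G-v)$ has at most two extensions to a Naji solution of $G$. This gives $|\mathcal{B}(G)|\le 2^{n+1}$, matching the lower bound and forcing the stated structural conclusion.

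In its homogeneous form the key lemma asserts that any $\gamma$ satisfying the Naji equations of $G$ with $\gamma(x,y)=0$ for all $x,y\ne v$ must equal $0$ or $\delta(v)$. The Naji equations involving $v$ impose the following constraints on $\gamma(v,\cdot)$ and $\gamma(\cdot,v)$: type (a) gives $\gamma(v,w)=\gamma(w,v)$ for $w\in N(v)$; type (b) forces $\gamma(v,\cdot)$ to be constant on each connected component of $G[V(G)-N(v)-v]$ and forces $\gamma(x,v)=0$ for every $x\notin N(v)\cup\{v\}$ that has a non-neighbor in $N(v)$; type (c) forces $\gamma(v,w)=\gamma(v,w')$ for non-adjacent $w,w'\in N(v)$ and yields the three-term identity $\gamma(v,w)+\gamma(v,x)+\gamma(x,v)=0$ whenever $w\in N(v)$, $x\notin N(v)\cup\{v\}$, and $wx\in E(G)$. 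Once one shows these constraints force $\gamma(v,\cdot)$ to be a single constant $t\in\{0,1\}$, the remaining constraints pin $\gamma(\cdot,v)$ down to match the corresponding coordinates of $t\cdot\delta(v)$, finishing the lemma.

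The heart of the argument, and the step I expect to be the main obstacle, is establishing that $\gamma(v,\cdot)$ is constant on $V(G)-v$. I plan to argue by contradiction: were $V(G)-v$ to split into nonempty parts $X$ and $Y$ on which $\gamma(v,\cdot)$ took different values, the constraints above would translate into strong structural restrictions across the partition (no edge of $G[V(G)-N(v)-v]$ would cross the partition; every pair in $(X\cap N(v))\times(Y\cap N(v))$ would be an edge of $G$; and edges from $N(v)$ to $V(G)-N(v)-v$ crossing the partition would be confined to pairs $(w,x)$ with $N(v)\subseteq N(x)$). The aim is to read off from this data a split of $G$ itself, contradicting primeness. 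The atypical non-neighbors $x$ of $v$ with $N(v)\subseteq N(x)$, which escape the type (b) forcing $\gamma(x,v)=0$, will require the most careful handling and are where I expect the argument to be most delicate.
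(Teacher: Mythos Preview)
Your approach is essentially the same as the paper's: induction on $n$ with base case $n=5$, reduction via local complementation to the situation where $G-v$ is prime, and then proving that a difference $\gamma=\beta_1-\beta_0$ vanishing on $G-v$ must equal $0$ or $\delta(v)$, by showing any other possibility forces a split of $G$. The paper uses the finer four-set partition $A,B,C,D$ of $V(G-v)$ according to the pair $(\gamma(\cdot,v),\gamma(v,\cdot))$, proves eight structural claims, and then runs a four-case analysis; your two-set partition $X,Y$ according to $\gamma(v,\cdot)$ alone is coarser but leads to the same kind of split-finding argument, and your observation that once $\gamma(v,\cdot)$ is constant the values $\gamma(\cdot,v)$ are forced is correct. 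For the base case the paper avoids enumerating all prime $5$-vertex graphs by invoking Bouchet's lemma that every such graph is locally equivalent to $C_5$, together with Proposition~\ref{localn2}; you may want to use this shortcut rather than direct enumeration. The ``atypical non-neighbors'' you flag are precisely what drive the delicate cases in the paper (Cases~2--4 there), so your instinct about where the work lies is accurate.
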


\begin{proof}
Suppose first that $G=C_{5}$. Proposition \ref{solution} tells us that every
sum%
\[
\beta_{0}+\sum_{s\in S}\delta(s)\text{ or \ }\beta_{0}+\rho+\sum_{s\in
S}\delta(s)
\]
is a Naji solution of $G$, and Proposition \ref{indep} tells us that the
dimension of the subspace spanned by $\{\rho\}\cup\{\delta(v)\mid v\in
V(C_{5})\}$ is 6. As noted in\ Section 5, the solution space for the Naji
equations of $C_{5}$ is also of dimension 6. Consequently the theorem holds
for $C_{5}$.

According to Bouchet \cite[Lemma 3.1]{Bec} every prime graph of order 5 is
locally equivalent to $C_{5}$, so Proposition \ref{localn2} tells us that the
theorem holds for all prime graphs of order 5.

We proceed using induction on $\left\vert V(G)\right\vert >5$. By Propositions
\ref{localn} and \ref{deletion}, without loss of generality we may replace $G$
with a locally equivalent graph so that there is a vertex $v\in V(G)$ such
that $G-v$ is prime. Let $\beta_{1}$ be some Naji solution for $G$. Then
$\beta_{0}$ and $\beta_{1}$ define Naji solutions for $G-v$ by restriction,
and the inductive hypothesis asserts that $\beta_{1}|(G-v)$ is
\[
(\beta_{0}+\sum_{s\in S}\delta(s))|(G-v)\text{ or \ }(\beta_{0}+\rho
+\sum_{s\in S}\delta(s))|(G-v)
\]
for some subset $S\subseteq V(G-v)$. Replacing $\beta_{1}$ with $\beta
_{1}+\sum_{s\in S}\delta(s)$ or $\beta_{1}+\rho+\sum_{s\in S}\delta(s)$, we
may presume that $\beta_{1}|(G-v)=\beta_{0}|(G-v)$. That is,
\begin{equation}
\beta_{1}(x,y)=\beta_{0}(x,y)\text{ whenever }v\notin\{x,y\}. \tag{$\ast
$}\label{restrict}%
\end{equation}

The rest of the proof involves a detailed analysis of the structure of $G$. We
partition $V(G-v)$ into four sets.

\begin{itemize}
\item $A=\{a\in V(G-v)\mid\beta_{0}(a,v)=\beta_{1}(a,v)$ and $\beta
_{0}(v,a)=\beta_{1}(v,a)\}$

\item $B=\{b\in V(G-v)\mid\beta_{0}(b,v)=\beta_{1}(b,v)$ and $\beta
_{0}(v,b)\not =\beta_{1}(v,b)\}$

\item $C=\{c\in V(G-v)\mid\beta_{0}(c,v)\not =\beta_{1}(c,v)$ and $\beta
_{0}(v,c)=\beta_{1}(v,c)\}$

\item $D=\{d\in V(G-v)\mid\beta_{0}(d,v)\not =\beta_{1}(d,v)$ and $\beta
_{0}(v,d)\not =\beta_{1}(v,d)\}$
\end{itemize}

Claim 1. Both $B\cap N(v)=\varnothing$ and $C\cap N(v)=\varnothing$.

proof: The Naji equations require $\beta_{i}(v,x)\neq\beta_{i}(x,v)$ for
$i\in\{0,1\}$ when $vx$ is an edge, and both inequalities cannot hold if $x\in
B\cup C$. {\ \rule{0.5em}{0.5em}}

Claim 2. Either $A\cap N(v)=\varnothing$ or $C=\varnothing$.

proof: Suppose $a\in A\cap N(v)$ and $c\in C$. If $ac\notin E(G)$ then the
Naji equations require $\beta_{i}(c,a)=\beta_{i}(c,v)$ for $i=0$ and $1$. This
is not possible, as $\beta_{0}(c,v)\not =\beta_{1}(c,v)$ by the definition of
$C$ and $\beta_{0}(c,a)=\beta_{1}(c,a)$ by (\ref{restrict}). Hence $ac\in
E(G)$. Then the Naji equations require%
\[
\beta_{i}(a,c)+\beta_{i}(a,v)+\beta_{i}(v,c)+\beta_{i}(c,v)=1
\]
for $i\in\{0,1\}$. Both equations cannot be true as $\beta_{0}(c,v)\not =%
\beta_{1}(c,v)$ and the other terms are all equal. {\ \rule{0.5em}{0.5em}}

Claim 3. Either $D\cap N(v)=\varnothing$ or $D-N(v)=\varnothing$.

proof: Suppose $x\in D\cap N(v)$ and $y\in D-N(v)$. If $xy\in E(G)$ then as
$vy\notin E(G)$, the Naji equations require%
\[
\beta_{i}(x,y)+\beta_{i}(x,v)+\beta_{i}(v,y)+\beta_{i}(y,v)=1
\]
for $i\in\{0,1\}$. Both equations cannot be true as $\beta_{0}(x,y)=\beta
_{1}(x,y)$ by (\ref{restrict}), and the other terms are all unequal. Hence
$xy\notin E(G)$, so $\beta_{i}(y,x)=\beta_{i}(y,v)$ for $i\in\{0,1\}$. Both
equations cannot be true as $\beta_{0}(y,x)=\beta_{1}(y,x)$ by (\ref{restrict}%
) but the definition of $D$ requires $\beta_{0}(y,v)\not =\beta_{1}(y,v)$.
{\ \rule{0.5em}{0.5em}}

Claim 4. No edge connects $(A\cup C)-N(v)$ to $(B\cup D)-N(v)$.

proof: Suppose $x\in(A\cup C)-N(v)$ is adjacent to $y\in(B\cup D)-N(v)$. Then
the Naji equations require $\beta_{i}(v,x)=\beta_{i}(v,y)$ for $i\in\{0,1\}$,
but both equations cannot be true as $\beta_{0}(v,x)=\beta_{1}(v,x)$ and
$\beta_{0}(v,y)\not =\beta_{1}(v,y)$. {\ \rule{0.5em}{0.5em}}

Claim 5. Suppose $b\in B$ and $x\notin B$ are neighbors. Then $x\in D$.

proof: If $x\in A\cap N(v)$ then as $b\notin N(v)$, the Naji equations require%
\[
\beta_{i}(v,x)+\beta_{i}(b,x)+\beta_{i}(b,v)+\beta_{i}(v,b)=1
\]
for $i\in\{0,1\}$. This is impossible as $\beta_{0}(v,b)\neq\beta_{1}(v,b)$
and the other terms of the two equations are all equal. Claim 4 now implies
that $x\in D$. {\ \rule{0.5em}{0.5em}}

Claim 6. If $a\in A\cap N(v)$ then $a$ is adjacent to every element of $D$,
and the other neighbors of $a$ all lie in $A$.

proof: Suppose $a\in A\cap N(v)$ is not adjacent to $d\in D$. If $d\notin
N(v)$ then the Naji equations require $\beta_{i}(d,a)=\beta_{i}(d,v)$ for
$i\in\{0,1\}$, an impossibility as $\beta_{0}(d,a)=\beta_{1}(d,a)$ by
(\ref{restrict}) and $\beta_{0}(d,v)\not =\beta_{1}(d,v)$ by the definition of
$D$. If $d\in N(v)$ then the Naji equations require%
\[
\beta_{i}(v,d)+\beta_{i}(v,a)+\beta_{i}(a,d)+\beta_{i}(d,a)=1
\]
for $i\in\{0,1\}$, an impossibility as $\beta_{0}(v,d)\neq\beta_{1}(v,d)$ and
the other terms of the two equations are all equal.

For the second assertion, observe that claim 2 tells us that $C=\varnothing$
and claim 5 tells us that no $b\in B$ is a neighbor of $a$.
{\ \rule{0.5em}{0.5em}}

Claim 7. If $c\in C$ then $c$ is adjacent to every element of $D\cap N(v)$,
and the other neighbors of $c$ all lie in $(A\cup C)-N(v)$.

proof: If $x$ is a neighbor of $c$ then claim 2 implies that $x\notin A\cap
N(v)$, and claim 4 implies that $x\notin(B\cup D)-N(v)$. Hence $x\in((A\cup
C)-N(v))\cup(D\cap N(v))$. If $d\in D\cap N(v)$ and $cd\notin E(G)$ then the
Naji equations require $\beta_{i}(c,d)=\beta_{i}(c,v)$ for $i\in\{0,1\}$; but
this is impossible as $\beta_{0}(c,d)=\beta_{1}(c,d)$ by (\ref{restrict}) and
$\beta_{0}(c,v)\not =\beta_{1}(c,v)$ by the definition of $C$.
{\ \rule{0.5em}{0.5em}}

Claim 8. If $a\in A-N(v)$ then the neighbors of $a$ all lie in $A\cup C$.

proof: Claim 4 implies that the neighbors of $a$ all lie in $A\cup C\cup(D\cap
N(v))$, so it suffices to verify that no neighbor of $a$ lies in $D\cap N(v)$.
Suppose instead that $d\in D\cap N(v)$ is a neighbor of $a$. The Naji
equations require%
\[
\beta_{i}(v,d)+\beta_{i}(a,d)+\beta_{i}(a,v)+\beta_{i}(v,a)=1
\]
for $i\in\{0,1\}$. This is impossible as $\beta_{0}(v,d)\neq\beta_{1}(v,d)$
and the other terms of the two equations are all equal. {\ \rule{0.5em}{0.5em}%
}

Claims 2 and 3 yield four cases.

Case 1. $C\not =\varnothing$ and $A\cap N(v)=\varnothing=D\cap N(v)$. In this
case claim 1 tells us that $N(v)=\varnothing$, an impossibility as a prime
graph cannot have an isolated vertex.

Case 2. $C\not =\varnothing$ and $A\cap N(v)=\varnothing=D-N(v)$. In this case
claim 1 tells us that $D=N(v)$, claim 7 tells us that the elements of $C$ are
all adjacent to all the elements of $D$ and the other neighbors of elements of
$C$ all lie in $A\cup C$, claim 8 tells us that the neighbors of elements of
$A$ all lie in $A\cup C$, and claim 5 tells us that the neighbors of elements
of $B$ all lie in $B\cup D$. As $(A\cup C,B\cup D\cup\{v\})$ cannot be a split
of $G$, either $\left\vert A\cup C\right\vert \leq1$ or $B\cup D=\varnothing$.
The latter is impossible as it would leave $v$ isolated. As $C\not =%
\varnothing$, $\left\vert A\cup C\right\vert \leq1$ implies $A=\varnothing$
and $\left\vert C\right\vert =1$. Then the lone $c\in C$ has $N(c)=D=N(v)$;
but this is impossible as $(B\cup D,\{c,v\})$ would be a split of $G$.

Case 3. $C=\varnothing$ and $D-N(v)=\varnothing$. In this case all elements of
$A\cap N(v)$ are adjacent to all elements of $D$ (claim 6), the neighbors of
elements of $B$ all lie in $B\cup D$ (claim 5), and the neighbors of elements
of $A-N(v)$ all lie in $A$ (claim 4). As neither $(A\cup\{v\},B\cup D)$ nor
$(A,B\cup D\cup\{v\})$ is a split of $G$, either $A=\varnothing$ or $B\cup
D=\varnothing$. If $A=\varnothing=C$ then $D=N(v)$ and $B=V(G-v)-N(v)$; hence
$\beta_{1}=\beta_{0}+\delta(v)$. If $B\cup D=\varnothing=C$ then $\beta
_{1}=\beta_{0}$.

Case 4. $C=\varnothing$ and $D\cap N(v)=\varnothing$. In this case
$N(v)\subseteq A$ (claim 1), the neighbors of elements of $A-N(v)$ all lie in
$A$ (claim 4), every element of $N(v)$ is adjacent to every element of $D$
(claim 6), and the neighbors of elements of $B$ all lie in $B\cup D$ (claim
5). As $(A\cup\{v\},B\cup D)$ cannot be a split of $G$, it follows that
$\left\vert A\right\vert =0$ or $\left\vert B\cup D\right\vert \leq1$. If
$\left\vert A\right\vert =0$ then $v$ is isolated, an impossibility in a prime
graph. If $\left\vert B\right\vert =1$ then the lone $b\in B$ is isolated,
another impossibility. If $B=C=\varnothing\neq D$ then there is a lone $d\in
D$, with $N(d)=N(v)$. But that too is impossible, as $(A,\{d,v\})$ would be a
split of $G$. Consequently $B=C=D=\varnothing$, so $\beta_{1}=\beta_{0}$.
\end{proof}

A corollary of Theorem \ref{step1} describes the relationship between the Naji
solutions of $G$ and those of $G-v$, in case both graphs are prime.

\begin{corollary}
\label{surject}Let $G$ be a prime Naji graph with $\left\vert V(G)\right\vert
\geq6$, and suppose $G-v$ is also prime. Then restriction defines a 2-to-1
surjection%
\[
\{\text{Naji solutions of }G\}\twoheadrightarrow\{\text{Naji solutions of
}G-v\}\text{.}%
\]

\end{corollary}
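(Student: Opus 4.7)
The plan is to combine Theorem \ref{step1} applied to both $G$ and $G-v$ with the linear independence given by Proposition \ref{indep}.

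First I would check that restriction $\beta \mapsto \beta|(G-v)$ is a well-defined map $\mathcal{B}(G) \to \mathcal{B}(G-v)$: every Naji equation of $G-v$ is determined by the induced subgraph on $V(G)\setminus\{v\}$, hence is automatically a Naji equation of $G$. Second, I would note that a prime graph of order at least $5$ is neither complete, a star, nor edgeless, since each of those admits an obvious split once the vertex count is that large. Hence Proposition \ref{indep} applies to both $G$ and $G-v$, and combined with Theorem \ref{step1} this gives $|\mathcal{B}(G)| = 2^{|V(G)|+1}$ and $|\mathcal{B}(G-v)| = 2^{|V(G)|}$.

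Next comes the key computation: restriction is additive and satisfies $\delta_G(s)|(G-v) = \delta_{G-v}(s)$ for $s \neq v$, $\delta_G(v)|(G-v) = 0$, and $\rho_G|(G-v) = \rho_{G-v}$. Each identity is immediate from the definitions, since all the nonzero coordinates of $\delta_G(v)$ involve $v$ and are discarded, while for $s \neq v$ the neighbor relation of $s$ is preserved in $G-v$. Fixing some $\beta_0 \in \mathcal{B}(G)$ and applying Theorem \ref{step1} to $G$, restriction sends each element of $\mathcal{B}(G)$ to some $\beta_0|(G-v) + \varepsilon\rho_{G-v} + \sum_{s \in S'}\delta_{G-v}(s)$ with $\varepsilon \in \{0,1\}$ and $S' \subseteq V(G-v)$; Theorem \ref{step1} applied to $G-v$ and the solution $\beta_0|(G-v)$ identifies this list with all of $\mathcal{B}(G-v)$, so restriction is surjective.

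Finally, for the 2-to-1 count I would observe that two preimages $\beta_0 + \varepsilon\rho_G + \sum_{s \in S}\delta_G(s)$ and $\beta_0 + \varepsilon'\rho_G + \sum_{s \in T}\delta_G(s)$ have equal restrictions if and only if $(\varepsilon+\varepsilon')\rho_{G-v} + \sum_{s \in (S \triangle T)\setminus\{v\}}\delta_{G-v}(s) = 0$. Linear independence of $\{\rho_{G-v}\} \cup \{\delta_{G-v}(s) \mid s \in V(G-v)\}$ forces $\varepsilon = \varepsilon'$ and $S \triangle T \subseteq \{v\}$, leaving exactly two preimages per image. The main obstacle is precisely this last point: without Proposition \ref{indep} one could not rule out that the $\rho$-family and the $\rho$-free family of Theorem \ref{step1} have overlapping restrictions. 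Everything else is routine bookkeeping.
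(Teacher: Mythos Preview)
Your proof is correct and follows essentially the same approach as the paper: both use Theorem~\ref{step1} together with the restriction identities $\delta_G(s)|(G-v)=\delta_{G-v}(s)$, $\delta_G(v)|(G-v)=0$, $\rho_G|(G-v)=\rho_{G-v}$ to get surjectivity, and Proposition~\ref{indep} to control the fiber size. The only cosmetic difference is that the paper verifies 2-to-1 by combining ``at least two preimages'' (via $\delta(v)$) with the cardinality count $|\mathcal{B}(G)|=2|\mathcal{B}(G-v)|$, whereas you compute the fibers directly from linear independence in $G-v$; these are equivalent uses of the same ingredients.
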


\begin{proof}
If $\beta$ is any Naji solution of $G$ then certainly the restriction
$\beta|(G-v)$ is a Naji solution of $G-v$. As the $\rho$ and $\delta(x)$
vectors of $G$ restrict to those of $G-v$ (with the exception that $\delta(v)$
restricts to $0$) Theorem \ref{step1} guarantees that restriction defines a
surjection. To verify that the surjection is 2-to-1, i.e., every Naji solution
of $G-v$ corresponds to precisely two Naji solutions of $G$, note first that
every Naji solution of $G-v$ corresponds to at least two different Naji
solutions of $G$; there must be one, as restriction is surjective, and then
there is another obtained by adding $\delta(v)$. Then note that Proposition
\ref{indep} implies that there are twice as many Naji solutions for $G$ as
there are for $G-v$.
\end{proof}

Another corollary is the following result of Bouchet \cite{Bec}.

\begin{corollary}
\label{oneword}\cite{Bec} Let $G$ be a prime circle graph. Then there is only
one double occurrence word $W$ with $G=\mathcal{I}(W)$, up to cyclic
permutation and reversal.
\end{corollary}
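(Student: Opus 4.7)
The plan is to combine Theorem~\ref{step1} with Proposition~\ref{consecutive} to recover $W$ from the algebraic data. The strategy has two phases: first, exhibit a single Naji solution $\beta$ that simultaneously corresponds to an orientation of $W$ and to an orientation of $W'$; second, reconstruct the cyclic word from $\beta$.

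For the first phase I would count. Since $G$ is prime with $|V(G)|\geq 5$, it is neither complete, nor a star, nor edgeless, so Proposition~\ref{indep} gives $\mathrm{rank}\,\{\rho\}\cup\{\delta(v)\mid v\in V(G)\}=n+1$, and then Theorem~\ref{step1} yields $|\mathcal{B}(G)|=2^{n+1}$. Each realization $G=\mathcal{I}(W)$ produces $2^{n}$ distinct Naji solutions through its $2^{n}$ orientations (any two orientations differ by a sum of $\delta(v)$'s, which are linearly independent by Proposition~\ref{indep}), and these solutions form a single coset of $\langle\delta(v):v\in V(G)\rangle$. Reversing $W$ shifts that coset by $\rho$, so $W^{\mathrm{rev}}$ supplies the complementary coset, and the two cosets together exhaust $\mathcal{B}(G)$. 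Now the orientations of a second realization $W'$ also supply a full coset of $\langle\delta(v)\rangle$, hence match either the $W$-coset or the $W^{\mathrm{rev}}$-coset. After replacing $W'$ by $W'^{\mathrm{rev}}$ if necessary, I may fix a $\beta\in\mathcal{B}(G)$ that arises from both an orientation of $W$ and an orientation of $W'$.

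The second phase is the main obstacle: I must show that $\beta$, together with its orientation interpretation, determines the cyclic sequence of labeled positions in $W$. The key tool is Proposition~\ref{consecutive}, whose test $\beta(x,v)=\beta(x,w)$ for all $x\notin\{v,w\}$ detects precisely when $v^{\mathrm{out}}$ and $w^{\mathrm{out}}$ are cyclically adjacent in the $\beta$-oriented word. Applying the same test to the three Naji solutions $\beta+\delta(v)$, $\beta+\delta(w)$, and $\beta+\delta(v)+\delta(w)$---each a Naji solution of $W$ by Proposition~\ref{solution}, corresponding to an orientation with $v$'s and/or $w$'s in/out labels swapped---detects the remaining labeled adjacencies $v^{\mathrm{in}}$--$w^{\mathrm{out}}$, $v^{\mathrm{out}}$--$w^{\mathrm{in}}$, and $v^{\mathrm{in}}$--$w^{\mathrm{in}}$. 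Ranging over all pairs of vertices records the full cyclic-adjacency relation on the $2n$ labeled positions of $W$; this relation is a $2$-regular graph, hence a disjoint union of cycles, and because $W$ is a single cyclic word it is in fact one $2n$-cycle. That cycle pins down the cyclic arrangement of labeled positions up to reversal, and the value of any single $\beta(v,w)$ fixes the direction of traversal. The identical reconstruction applied to $W'$ produces the same cyclic sequence, so $W=W'$ up to cyclic permutation, completing the proof.
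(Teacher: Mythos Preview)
Your proof is correct and rests on the same two ingredients as the paper's---Theorem~\ref{step1} to align the Naji solutions coming from $W$ and $W'$, and Proposition~\ref{consecutive} to read off the word from the solution---so the overall strategy is the same. The organizational difference is that the paper reconstructs $W'$ sequentially (cyclically permute so both words start $v^{out}$, use Proposition~\ref{consecutive} to see the second letter must agree, possibly reverse once, then iterate), whereas you recover the entire $2n$-cycle of labeled positions in one stroke by running the Proposition~\ref{consecutive} test on the four shifts $\beta+\epsilon_v\delta(v)+\epsilon_w\delta(w)$ for every pair $\{v,w\}$. Your Phase~1 counting argument is also more explicit than the paper's one-line appeal to Theorem~\ref{step1}, and it handles the reversal up front rather than midway through the reconstruction; either way works.
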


\begin{proof}
Let $W$ and $W^{\prime}$ be double occurrence words with $\mathcal{I}%
(W)=\mathcal{I}(W^{\prime})=G$. Theorem \ref{step1} tells us that $W$ and
$W^{\prime}$ have orientations whose corresponding Naji solutions are the
same. Choose any $v\in V(G)$, and cyclically permute $W$ and $W^{\prime}$ so
they both begin with $v^{out}$. As $v$ is not isolated in $G$, the second
letter in $W$ is some $w\neq v$. Adding $\delta(w)$ to both Naji solutions if
necessary, we may presume that this second letter of $W$ is $w^{out}$. Then
$\beta(x,v)=\beta(x,w)$ $\forall x\notin\{v,w\}$; as $W$ and $W^{\prime}$
provide the same Naji solution, Proposition \ref{consecutive} guarantees that
$v^{out}$ and $w^{out}$ are consecutive in $W^{\prime}$ too. It is possible
that $w^{out}$ is the last letter in $W^{\prime}$, rather than the second; if
so, reverse and cyclically permute $W^{\prime}$ so that both $W$ and
$W^{\prime}$ are in the form $v^{out}w^{out}...$ Repeat this process to verify
that $W$ and $W^{\prime}$ must have the same third letter, then the same
fourth letter, and so on.
\end{proof}

The appearance of Bouchet's Corollary \ref{oneword} here is no coincidence.
Our proof of Naji's theorem follows the outline of the argument given by
Bouchet in justifying his circle graph recognition algorithm \cite{Bec}.
However, the second part of the proof is considerably more difficult for us.
The second part of Bouchet's algorithm used simple \textquotedblleft brute
force\textquotedblright\ (as he described it on p. 253 of \cite{Bec}) to check
all possible double occurrence words for a prime graph $G$, knowing that the
essentially unique double occurrence word for a prime circle graph $G$ must
arise from the essentially unique double occurrence word for a prime $G-v$.
Our job will be more difficult, as we must prove that a prime Naji graph
arises from a double occurrence word. Before completing this job in\ Section
8, we take a moment to discuss the converse of Theorem \ref{step1}.

\section{Counting Naji solutions}

An anonymous reader mentioned that for $n\geq5$ Corollary \ref{oneword}\ has a
valid converse, which was discussed by Gabor, Supowit and Hsu \cite{GSH}; a
complete proof of the converse was given by\ Courcelle \cite{C}. In this
section we show that Theorem \ref{step1} also has a valid converse for
$n\geq5$: a Naji graph is prime if and only if it has precisely $2^{n+1}$ Naji
solutions. This result is not part of our proof of Naji's theorem, so the
reader who is primarily interested in that proof may proceed to Section 8.

\begin{proposition}
\label{components}Let $G_{1}$ and $G_{2}$ be disjoint graphs of orders $n_{1}$
and $n_{2}$, respectively. Let $G=G_{1}\cup G_{2}$ be their union, with no
edge connecting $G_{1}$ to $G_{2}$. Then $G$ is a Naji graph if and only if
$G_{1}$ and $G_{2}$ are both Naji graphs. Moreover,%
\[
\left\vert \mathcal{B}(G)\right\vert \geq2^{n}\left\vert \mathcal{B}%
(G_{1})\right\vert \left\vert \mathcal{B}(G_{2})\right\vert \text{.}%
\]

\end{proposition}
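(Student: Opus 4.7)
My plan is to prove both directions simultaneously by carefully identifying which Naji equations of $G$ actually couple the two components, and then counting freely-choosable parameters.

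First I would show the easy direction: every Naji equation of $G_i$ is also a Naji equation of $G$, because the three equation types depend only on the induced subgraph on the vertices involved, and $G_i=G[V(G_i)]$ since no edge of $G$ connects $V(G_1)$ to $V(G_2)$. Consequently, the natural restriction map sends $\mathcal{B}(G)$ into $\mathcal{B}(G_1)\times\mathcal{B}(G_2)$, and in particular if $G$ is a Naji graph then so are $G_1$ and $G_2$.

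For the converse, given $\beta_1\in\mathcal{B}(G_1)$ and $\beta_2\in\mathcal{B}(G_2)$, I would extend them to $G$ by specifying the ``cross-values'' $\beta(x,y)$ with $x$ and $y$ in different $V(G_i)$. The key observation is that a Naji equation of $G$ mentions a cross-value only in one situation: a type (b) equation in which the adjacent pair $v,w$ lies in one $V(G_i)$ and the third vertex $x$ lies in the other. (There are no cross-edges, so no type (a) equation or type (c) equation mentions a cross-value, and no type (b) equation can have $v,w$ split between the two $V(G_i)$.) Equivalently, this is the comment following Definition \ref{najieq} applied to any $x\in V(G_{3-i})$: since $N(x)\subseteq V(G_{3-i})$, the graph $G-N(x)$ contains $G_i$ as a union of components, and $\beta(x,-)$ must be constant on each component of $G_i$.

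Let $c_i$ denote the number of connected components of $G_i$. The previous paragraph shows that a cross-extension is determined by freely assigning, for each $x\in V(G_{3-i})$, one value of $\beta(x,-)\in GF(2)$ per component of $G_i$; this gives $n_2c_1+n_1c_2$ free binary parameters, and every such extension produces a genuine Naji solution of $G$ (the intra-$G_i$ equations hold because $\beta_i\in\mathcal{B}(G_i)$, and the cross type (b) equations hold by construction). Hence restriction is surjective and in fact
\[
|\mathcal{B}(G)|=|\mathcal{B}(G_1)|\,|\mathcal{B}(G_2)|\cdot 2^{n_1c_2+n_2c_1}.
\]
Since each $c_i\geq 1$, we have $n_1c_2+n_2c_1\geq n_1+n_2=n$, which gives the asserted inequality (with equality precisely when $G_1$ and $G_2$ are both connected). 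The only delicate step is the exhaustive check that no Naji equation other than the cross type (b) equations couples the two sides; this is not hard but must be done carefully by running through the three equation types.
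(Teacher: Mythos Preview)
Your argument is correct and follows the same core idea as the paper: restrict to get the forward direction, and extend a pair $(\beta_1,\beta_2)$ by choosing cross-values $\beta(x,y)$ consistently with the type (b) equations, which are the only Naji equations coupling the two sides. The paper carries this out by a small case analysis on $n_1,n_2$ and, in the main case, simply assigns a single constant $\beta_v$ to all cross-values $\beta(v,-)$, thereby producing $2^{n}$ extensions and the stated inequality.

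Your version is a genuine sharpening: by observing that the cross type (b) constraints force $\beta(x,-)$ to be constant only on each connected component of $G_i$, you obtain the exact count
\[
|\mathcal{B}(G)|=|\mathcal{B}(G_1)|\,|\mathcal{B}(G_2)|\cdot 2^{\,n_1c_2+n_2c_1},
\]
from which the paper's inequality follows since $c_i\ge 1$, with equality exactly when both $G_i$ are connected. This avoids the paper's case split and gives strictly more information; the trade-off is that you must justify carefully (as you note) that \emph{all} cross constraints are exhausted by these type (b) equations, whereas the paper only needs to exhibit enough extensions.
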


\begin{proof}
If either of $G_{1},G_{2}$ is a non-Naji graph then of course $G$ is not a
Naji graph. Suppose instead that $G_{1}$ and $G_{2}$ are both Naji graphs.

If $1=n_{1}=n_{2}$ then $G$ has no Naji equation, so each of $\beta
(v_{1},v_{2})$, $\beta(v_{2},v_{1})$ may be $0$ or $1$; here $V(G_{i}%
)=\{v_{i}\}$. Consequently $\left\vert \mathcal{B}(G)\right\vert =4$ in this case.

Suppose $1=n_{1}<n_{2}$ and $\beta$ is a Naji solution of $G_{2}$. Let
$\{v_{1}\}=V(G_{1})$. If $v\in V(G_{2})$ then no Naji equation of $G$ mentions
$\beta(v,v_{1})$, so an arbitrary value may be chosen for $\beta(v,v_{1})$.
The only Naji equations of $G$ that mention a value $\beta(v_{1},v)$ are
equations of type (b), and these equations are certainly satisfied if the
$\beta(v_{1},v)$ values are all the same. Consequently each Naji equation of
$G_{2}$ yields at least $2^{n}$ different Naji solutions of $G$.

Suppose now that $2\leq n_{1}\leq n_{2}$\ and for $i\in\{1,2\}$, $\beta_{i}$
is a Naji solution of $G_{i}$. For each $v\in V(G)$ let $\beta_{v}\in GF(2)$
be arbitrary. The only Naji equations of $G$ that involve vertices from both
$G_{1}$ and $G_{2}$ are those of type (b), and these equations are all
satisfied by defining%
\[
\beta(v,w)=\left\{
\begin{array}
[c]{l}%
\beta_{i}(v,w)\text{, if }i\in\{1,2\}\text{ and }v,w\in V(G_{i})\\
\beta_{v}\text{, if }i\in\{1,2\}\text{, }v\in V(G_{i})\text{ and }w\notin
V(G_{i})
\end{array}
\right.  \text{.}%
\]
The values $\beta_{v}$ are arbitrary, so $\left\vert \mathcal{B}(G)\right\vert
\geq2^{n}\left\vert \mathcal{B}(G_{1})\right\vert \left\vert \mathcal{B}%
(G_{2})\right\vert $.
\end{proof}

\begin{proposition}
\label{splitsolutions}If a connected graph $G$ has a split $(X,Y)$, then $G$
is a Naji graph if and only if $G_{X}$ and $G_{Y}$ are both Naji graphs.
Moreover,
\[
\left\vert \mathcal{B}(G)\right\vert \geq\left\vert \mathcal{B}(G_{X}%
)\right\vert \left\vert \mathcal{B}(G_{Y})\right\vert /2.
\]

\end{proposition}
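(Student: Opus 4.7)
The proposition has two parts, the biconditional and the counting lower bound, which I will handle in that order. Because $G$ is connected and $|X|,|Y|\geq 2$, both $X'$ and $Y'$ are nonempty; fix $y^{*}\in Y'$ and $x^{*}\in X'$. For the forward direction, given $\beta\in\mathcal{B}(G)$ I build $\beta_{X}$ on $G_{X}$ by setting $\beta_{X}=\beta$ on $X\times X$ and using $y^{*}$ as a proxy for the marker: $\beta_{X}(v,y_{0})=\beta(v,y^{*})$ and $\beta_{X}(y_{0},v)=\beta(y^{*},v)$ for $v\in X$. Because $N_{G_{X}}(y_{0})=X'=N_{G}(y^{*})\cap X$, each Naji equation of $G_{X}$ involving $y_{0}$ matches, edge-for-edge and non-edge-for-non-edge, a Naji equation of $G$ involving $y^{*}$; equations of $G_{X}$ not involving $y_{0}$ are already Naji equations of $G[X]\subseteq G$. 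So $\beta_{X}\in\mathcal{B}(G_{X})$, and a symmetric construction using $x^{*}$ produces $\beta_{Y}\in\mathcal{B}(G_{Y})$.

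For the converse, given $\beta_{X}\in\mathcal{B}(G_{X})$ and $\beta_{Y}\in\mathcal{B}(G_{Y})$, I use Corollary \ref{solutions} to replace each by a modification in a convenient normal form, say $\beta_{X}(v,y_{0})=0$ for all $v\in X$ and $\beta_{Y}(y,x_{0})=1$ for all $y\in Y$; the type (a) equations of $G_{X}$ and $G_{Y}$ then force $\beta_{X}(y_{0},v)=1$ for $v\in X'$ and $\beta_{Y}(x_{0},y)=0$ for $y\in Y'$. Define $\beta$ on $G$ by $\beta=\beta_{X}$ on $X\times X$, $\beta=\beta_{Y}$ on $Y\times Y$, and cross values $\beta(v,y)=0$, $\beta(y,v)=1$ for $v\in X,\ y\in Y$. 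Every Naji equation entirely within $X$ or within $Y$ is a Naji equation of $G_{X}$ or $G_{Y}$. Each cross equation partitions, by which of $X',\,X-X',\,Y',\,Y-Y'$ its vertices lie in, into a case that should reduce to a marker equation of $G_{X}$ or $G_{Y}$ once the normalized cross values are substituted. The main obstacle will be this case analysis: the subtlest are the type (c) equations whose three vertices span both $X$ and $Y$, since they couple a marker equation of $G_{X}$ with values of $\beta_{Y}$ (or vice versa); if some case fails to reduce as stated, I will introduce a further $\delta$-shift (always available by Proposition \ref{solution}) to absorb the discrepancy.

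For the counting bound, the construction above gives a map from $\mathcal{B}(G_{X})\times\mathcal{B}(G_{Y})$ to $\mathcal{B}(G)$. Distinct normalized pairs yield distinct $\beta$'s because the restrictions of $\beta$ to $X\times X$ and to $Y\times Y$ recover the normalized $\beta_{X}$ and $\beta_{Y}$. The normalization identifies each pair with exactly one partner differing by a combination of $\rho$-shifts and $\delta$-shifts at the markers that preserves the gluing, so the map is at most $2$-to-$1$ and $|\mathcal{B}(G)|\geq|\mathcal{B}(G_{X})||\mathcal{B}(G_{Y})|/2$.
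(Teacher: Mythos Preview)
Your forward direction (using $y^{*},x^{*}$ as proxies for the markers) is the same as the paper's and is fine. The two gaps are in the gluing construction and in the counting.

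\textbf{The gluing.} Uniform cross values $\beta(v,y)=0,\ \beta(y,v)=1$ do not work. Take $v\in X'$, $w\in X\setminus X'$ with $vw\in E(G)$, and $x\in Y'$. Then $vw,vx\in E(G)$, $wx\notin E(G)$, and your $\beta$ gives
\[
\beta(v,w)+\beta(v,x)+\beta(w,x)+\beta(x,w)=\beta_X(v,w)+0+0+1,
\]
so you need $\beta_X(v,w)=0$. But the corresponding type~(c) equation in $G_X$ (with $y_0$ playing the role of $x$) reads
\[
\beta_X(v,w)+\beta_X(v,y_0)+\beta_X(w,y_0)+\beta_X(y_0,w)=1,
\]
i.e.\ $\beta_X(v,w)=1+\beta_X(y_0,w)$ under your normalization. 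The value $\beta_X(y_0,w)$ for $w\notin X'$ is \emph{not} controlled by your normalization $\beta_X(\cdot,y_0)\equiv 0$; it is constant only on connected components of $G[X\setminus X']$ and cannot in general be forced to a single value by $\delta$-shifts without destroying the normalization you already imposed. This is exactly why the paper's construction does \emph{not} use constant cross values: it sets $\beta(y,w)=\beta_X(y_0,w)$ for $y\in Y'$ and $\beta(v,y)=\beta_Y(x_0,y)$ for $v\in X'$, carrying the marker's ``view'' across the split. Your proposed fallback (``introduce a further $\delta$-shift to absorb the discrepancy'') does not rescue this, because $\delta_G(w)$ for $w\in X\setminus X'$ does not change $\beta(x,w)$ for $x\in Y$ (there is no edge), while $\delta_G$ at other vertices breaks equations you have already verified.

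\textbf{The counting.} Even granting a correct gluing, your $2$-to-$1$ claim is wrong as stated. The normalization you invoke via Corollary~\ref{solutions} shifts by $\delta(v)$ for \emph{every} $v\in X$ with the wrong value of $\beta_1(v,y_0)$; the fibres of ``normalize'' on $\mathcal{B}(G_X)$ therefore have size $2^{|X|}$ (and similarly $2^{|Y|}$ on the other side), not $2$. The paper avoids this by \emph{undoing} the normalization in $G$: it records the shift sets $S_1\subseteq X$, $S_2\subseteq Y$, glues the normalized pair to get $\beta$, and then defines $f(\beta_1,\beta_2)=\beta+\sum_{S_1}\delta_G+\sum_{S_2}\delta_G$. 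Now $f(\beta_1,\beta_2)$ restricted to $X\times X$ recovers $\beta_1$ itself (not its normalization), and the only remaining ambiguity in reading off $S_1,S_2$ from $f(\beta_1,\beta_2)$ is a single binary choice, giving the factor of~$2$. Without this un-normalization step your map collapses $2^{|X|+|Y|}$ pairs to one image, and the inequality does not follow.
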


\begin{proof}
$G_{X}$ is isomorphic to a full subgraph of $G$, namely $G[X\cup\{y_{0}\}]$
for any $y_{0}\in Y^{\prime}$. Similarly, $G_{Y}$ $\cong G[Y\cup\{x_{0}\}]$
for any $x_{0}\in X^{\prime}$. We conclude that if either $G_{X}$ or $G_{Y}$
is not a Naji graph, then $G$ cannot be a Naji graph either. \ In this case
the inequality of the statement is satisfied because both sides are $0$.

If $G_{X}$ and $G_{Y}$ are both Naji graphs then by Corollary \ref{solutions},
$G_{X}$ has a Naji solution $\beta_{X}$ such that $\beta_{X}(x,y_{0})=1$
$\forall x\in X$, and $G_{Y}$ has a Naji solution $\beta_{Y}$ such that
$\beta_{Y}(y,x_{0})=0$ if and only if $y\in Y^{\prime}$. Given such $\beta
_{X}$ and $\beta_{Y}$, define $\beta$ as follows:%
\[
\beta(v,w)=\left\{
\begin{array}
[c]{l}%
\beta_{X}(v,w)\text{, if }v,w\in X\\
\beta_{Y}(v,w)\text{, if }v,w\in Y\\
\beta_{X}(y_{0},w)\text{, if }v\in Y^{\prime}\text{ and }w\in X\text{ }\\
1\text{, if }v\in Y-Y^{\prime}\text{ and }w\in X\\
\beta_{Y}(x_{0},w)\text{, if }v\in X^{\prime}\text{ and }w\in Y\\
1\text{, if }v\in X-X^{\prime}\text{ and }w\in Y
\end{array}
\right.  \text{.}%
\]

We claim that $\beta$ is a Naji solution of $G$. As $\beta_{X}$ and $\beta
_{Y}$ satisfy all Naji equations involving only vertices from $X$ or only
vertices from $Y$, to verify the claim it suffices to consider each equation
that involves at least one vertex from $X$ and at least one vertex from $Y$.
If $v\in X^{\prime}$ and $w\in Y^{\prime}$ then $\beta(v,w)=\beta_{Y}%
(x_{0},w)=1+\beta_{Y}(w,x_{0})=1+0=1$ and $\beta(w,v)=\beta_{X}(y_{0}%
,v)=1+\beta_{X}(v,y_{0})=1+1=0$, so the type (a) Naji equation $\beta
(v,w)=\beta(w,v)+1$ is satisfied. Type (b) Naji equations $\beta
(v,x)=\beta(v,w)$ arise in several situations: $v\in X-X^{\prime}$ and $w,x\in
Y$ (in which case both $\beta$ values are $1$); $v\in X-X^{\prime}$, $w\in
X^{\prime}$ and $x\in Y^{\prime}$ (in which case $\beta(v,x)=1$ and
$\beta(v,w)=\beta_{X}(v,w)=\beta_{X}(v,y_{0})=1$); $v\in X^{\prime}$ and
$w,x\in Y-Y^{\prime}$ (in which case $\beta(v,w)=\beta_{Y}(x_{0},w)$ and
$\beta(v,x)=\beta_{Y}(x_{0},x)$, and these two are equal according to a type
(b) Naji equation of $G_{Y}$); and similar situations in which $X$ and $Y$
have been interchanged. If $v\in X^{\prime}$, $w\in Y^{\prime}$, $x\in Y$ and
$wx\notin E(G)$ then the type (c) Naji equation $\beta(v,w)+\beta
(v,x)+\beta(w,x)+\beta(x,w)=1$ is satisfied in $G$ because $\beta_{Y}%
(x_{0},w)+\beta_{Y}(x_{0},x)+\beta_{Y}(w,x)+\beta_{Y}(x,w)=1$ is satisfied in
$G_{Y}$. If $v\in X^{\prime}$, $w\in Y^{\prime}$ and $x\in N(v)\cap
(X-X^{\prime})$ then $\beta(v,w)=1+\beta(w,v)=1+\beta_{X}(y_{0},v)=\beta
_{X}(v,y_{0})$, so the corresponding type (c) Naji equation of $G$ is
satisfied because%
\begin{align*}
&  \beta(v,w)+\beta(v,x)+\beta(w,x)+\beta(x,w)\\
&  =\beta_{X}(v,y_{0})+\beta_{X}(v,x)+\beta_{X}(y_{0},x)+1\\
&  =\beta_{X}(v,y_{0})+\beta_{X}(v,x)+\beta_{X}(y_{0},x)+\beta_{X}(x,y_{0})
\end{align*}
and the last line equals 1 by a Naji equation of $G_{X}$. Similar situations
in which $X$ and $Y$ are reversed are verified in similar ways.

The claim verifies the assertion that $G$ is a Naji graph. To verify the
inequality of the statement, define a mapping $f:\mathcal{B}(G_{X}%
)\times\mathcal{B}(G_{Y})\rightarrow\mathcal{B}(G)$ as follows. If $\beta_{1}$
is a Naji solution of $G_{X}$ and $\beta_{2}$ is a Naji solution of $G_{Y}$,
let $S_{1}=\{x\in X\mid\beta_{1}(x,y_{0})=0\}$ and $S_{2}=\{y\in Y^{\prime
}\mid\beta_{2}(y,x_{0})=1\}\cup\{y\in Y-Y^{\prime}\mid\beta_{2}(y,x_{0})=0\}$.
Then
\[
\beta_{X}=\beta_{1}+\sum_{x\in S_{1}}\delta_{G_{X}}(x)\text{ and }\beta
_{Y}=\beta_{2}+\sum_{y\notin S_{2}}\delta_{G_{Y}}(y)
\]
satisfy the requirements of the preceding paragraph. If $\beta$ is the Naji
solution of $G$ discussed there, then let
\[
f(\beta_{1},\beta_{2})=\beta+\sum_{x\in S_{1}}\delta_{G}(x)+\sum_{y\notin
S_{2}}\delta_{G}(y)\text{.}%
\]

Notice that we can \emph{almost} determine $S_{1}$ and $S_{2}$ from
$f(\beta_{1},\beta_{2})$. If $x\in X-X^{\prime}$ then $x\in S_{1}$ if and only
if $f(\beta_{1},\beta_{2})(x,y_{0})\neq\beta(x,y_{0})=1$. If $y\in
Y-Y^{\prime}$ then $y\in S_{2}$ if and only if $f(\beta_{1},\beta_{2}%
)(y,x_{0})\neq\beta(y,x_{0})=1$. Let us assume for the moment that $y_{0}\in
S_{2}$. Then if $x\in X^{\prime}$, $x\in S_{1}$ if and only if $f(\beta
_{1},\beta_{2})(x,y_{0})=\beta(x,y_{0})=1$. In particular, the preceding
sentence determines whether $x_{0}\in S_{1}$. Then for $y\in Y^{\prime}$,
$y\in S_{2}$ if and only if either $f(\beta_{1},\beta_{2})(y,x_{0}%
)=\beta(y,x_{0})=0$ and $x_{0}\in S_{1}$, or $f(\beta_{1},\beta_{2}%
)(y,x_{0})\neq\beta(y,x_{0})=0$ and $x_{0}\not \in S_{1}$. If we assume
$y_{0}\not \in S_{2}$ then we can determine $S_{1}$ and $S_{2}$ from
$f(\beta_{1},\beta_{2})$ in a similar way. Of course once we determine $S_{1}$
and $S_{2}$, we can determine $\beta_{1}$ and $\beta_{2}$ from $f(\beta
_{1},\beta_{2})$. We conclude that for each $\beta\in\mathcal{B}(G)$, there
are at most two distinct pairs $(\beta_{1},\beta_{2})\in\mathcal{B}%
(G_{X})\times\mathcal{B}(G_{Y})$ with $f(\beta_{1},\beta_{2})=\beta$; one pair
results from the assumption that $y_{0}\in S_{2}$, and the other pair results
from the assumption that $y_{0}\not \in S_{2}$. The inequality of the
statement follows.
\end{proof}

Before stating the main result of this section, we count Naji solutions for
graphs of orders $n\in\{1,2,3,4,5\}$.

If $n=1$ then $G$ has one vacuous Naji solution. Up to isomorphism, there are
two graphs with $n=2$. The disconnected graph has no Naji equations, so there
are 4 different Naji solutions. The connected graph has only two Naji
solutions, because of the requirement that $\beta(v,w)\neq\beta(w,v)$. Notice
that for $n\leq2$, $\left\vert \mathcal{B}(G)\right\vert \geq2^{n-1}$.

Up to isomorphism, there are four graphs with $n=3$. One graph has no edge; it
has $2^{6}$ Naji solutions. One graph has precisely one edge; it has $2^{4}$
Naji solutions. The two connected graphs are locally equivalent, so
Proposition \ref{localn2} tells us that they have the same number of Naji
solutions. One of the two is $K_{3}$, which has $2^{3}$ Naji solutions, as
noted in Section 5. Notice that for $n=3$, $\left\vert \mathcal{B}%
(G)\right\vert \geq2^{n}$.

Up to isomorphism, there are seven graphs with $n=4$. One graph has no edge,
and $2^{12}$ Naji solutions. One graph has precisely one edge, and $2^{9}$
Naji solutions. There are two graphs that have two edges. One of the two has
an isolated vertex; it has $2^{3}\cdot2^{4}=2^{7}$ Naji solutions. The other
has no isolated vertex; it has $2^{6}$ Naji solutions. There are two local
equivalence classes of connected 4-vertex graphs: one includes $K_{4}$, and
the other includes $C_{4}$. As observed in Section 5, they have $2^{6}$ and
$2^{5}$ Naji solutions, respectively. Notice that for $n=4$, $\left\vert
\mathcal{B}(G)\right\vert \geq2^{n+1}$.

For $n=5$, Proposition \ref{components} and the observations just given imply
that a disconnected graph has at least $2^{9}$ Naji solutions. If $G$ is
connected and has a split $(X,Y)$ we may presume $\left\vert X\right\vert =2$
and $\left\vert Y\right\vert =3$; then $G_{X}$ and $G_{Y}$ are of orders 3 and
4, respectively, so as noted above $\left\vert \mathcal{B}(G_{X})\right\vert
\geq2^{3}$ and $\left\vert \mathcal{B}(G_{Y})\right\vert \geq2^{5}$.
Proposition \ref{splitsolutions} tells us that $G$ has at least $2^{7}$
solutions. If $G$ is prime then Bouchet \cite[Lemma 3.1]{Bec} showed that $G$
is locally equivalent to $C_{5}$, so Proposition \ref{localn2} and the
discussion of Section 5 tell us that $\left\vert \mathcal{B}(G)\right\vert
=\left\vert \mathcal{B}(C_{5})\right\vert =2^{6}$. Notice that for $n=5$,
$\left\vert \mathcal{B}(G)\right\vert \geq2^{n+1}$; moreover $G$ realizes this
minimum if and only if $G$ is prime. The same pattern holds for larger values
of $n$:

\begin{corollary}
\label{moresplit} Let $G$ be a Naji graph with 5 or more vertices. If $G$ is
prime, then $\left\vert \mathcal{B}(G)\right\vert =2^{n+1}$. If $G$ has a
split, then $\left\vert \mathcal{B}(G)\right\vert \geq2^{n+2}$.
\end{corollary}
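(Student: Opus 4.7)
The plan is induction on $n=|V(G)|\geq 5$, with the base case $n=5$ already handled in the discussion immediately preceding the corollary.

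For the prime case, observe that a prime graph of order at least $5$ is neither complete (since $K_m$ with $m\geq 4$ admits any partition with $|X|=2$, $|Y|\geq 2$ as a split), nor a star, nor edgeless, so Proposition~\ref{indep} gives that $\{\rho\}\cup\{\delta(v)\mid v\in V(G)\}$ is linearly independent over $GF(2)$. Fix any Naji solution $\beta_0$. By Proposition~\ref{solution}, every sum $\beta_0+\epsilon\rho+\sum_{s\in S}\delta(s)$ with $\epsilon\in\{0,1\}$ and $S\subseteq V(G)$ is a Naji solution, and Theorem~\ref{step1} says these are all of them; linear independence makes the $2^{n+1}$ sums pairwise distinct, so $|\mathcal{B}(G)|=2^{n+1}$.

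For the split case, subdivide according to connectivity. First suppose $G$ is connected with split $(X,Y)$, and set $m_X=|X|+1$, $m_Y=|Y|+1$, so $m_X+m_Y=n+2$ and $m_X,m_Y\geq 3$. The small-order tallies preceding the corollary give $|\mathcal{B}(H)|\geq 2^{3}$ when $|V(H)|=3$ and $|\mathcal{B}(H)|\geq 2^{5}$ when $|V(H)|=4$, while the inductive hypothesis gives $|\mathcal{B}(H)|\geq 2^{m+1}$ when $|V(H)|=m\geq 5$. Proposition~\ref{splitsolutions} then yields $|\mathcal{B}(G)|\geq 2^{n+3}$ when both $m_X,m_Y\geq 4$, and $|\mathcal{B}(G)|\geq 2^{3}\cdot 2^{m_Y+1}/2=2^{n+2}$ when $m_X=3$ (forcing $m_Y=n-1\geq 4$); the remaining possibility $m_X=m_Y=3$ is excluded by $n\geq 5$. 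If instead $G$ is disconnected, writing $G$ as a disjoint union $G_1\cup G_2$ and applying Proposition~\ref{components} together with the bound $|\mathcal{B}(G_i)|\geq 2^{n_i-1}$ (immediate from the small-order tallies and induction) gives $|\mathcal{B}(G)|\geq 2^{2n-2}\geq 2^{n+2}$.

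The only delicate bookkeeping is in the connected-split subcase: the order-$3$ lower bound is $2^m$ rather than $2^{m+1}$, and this one missing factor of two must be absorbed against the factor of two lost in Proposition~\ref{splitsolutions}. The hypothesis $n\geq 5$ is precisely what ensures that at most one side of the split can have order $3$, so the other side contributes the stronger $2^{m+1}$ bound and the inequality goes through.
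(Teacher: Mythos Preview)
Your proof is correct and follows essentially the same approach as the paper's: induction on $n$, with the prime case handled via Proposition~\ref{indep} and Theorem~\ref{step1}, and the non-prime case split into disconnected and connected-with-split subcases via Propositions~\ref{components} and~\ref{splitsolutions}. The only cosmetic difference is that the paper normalizes by assuming $|X|\leq|Y|$ in the connected-split subcase (so that $G_Y$ automatically has order $\geq 4$), whereas you case on the orders $m_X,m_Y$ directly; you should mention the symmetric case $m_Y=3$, $m_X\geq 4$ (or simply insert ``without loss of generality $m_X\leq m_Y$'') to make the case analysis literally exhaustive.
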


\begin{proof}
If $G$ is prime, the assertion follows from Proposition \ref{indep} and
Theorem \ref{step1}.

Suppose $G$ is not prime; we may assume that the corollary holds for graphs
smaller than $G$, with five or more vertices. Combining this inductive
hypothesis with the discussion above, we may assume that every Naji graph of
order $k\in\{3,...,n-1\}$ has at least $2^{k}$ Naji solutions, and if $k>3$
then the number of solutions is at least $2^{k+1}$.

If $G$ has an isolated vertex $v$ then Proposition \ref{components} tells us
that $\left\vert \mathcal{B}(G)\right\vert \geq2^{n}\left\vert \mathcal{B}%
(G-v)\right\vert $. The discussion of the preceding paragraph tells us that
$\left\vert \mathcal{B}(G-v)\right\vert $ $\geq2^{n}$, so $\left\vert
\mathcal{B}(G)\right\vert \geq2^{2n}>2^{n+2}$. If $G$ is disconnected but has
no isolated vertex then let $G_{1}$ be a smallest connected component of $G$,
and let $G_{2}$ be the complement of $G_{1}$ in $G$. Suppose $G_{1}$ and
$G_{2}$ are of orders $n_{1}$ and $n_{2}$ respectively. Then $n_{2}\geq3$, so
Proposition \ref{components} and the discussion of the preceding paragraph
tell us that
\[
\left\vert \mathcal{B}(G)\right\vert \geq2^{n}\cdot\left\vert \mathcal{B}%
(G_{1})\right\vert \cdot\left\vert \mathcal{B}(G_{2})\right\vert \geq
2^{n}\cdot2\cdot2^{n_{2}}>2^{n+2}.
\]
Suppose $G$ is connected and $G$ has a split $(X,Y)$ with $\left\vert
X\right\vert \leq$ $\left\vert Y\right\vert $. Then $\left\vert X\right\vert
\geq2$ and $\left\vert Y\right\vert \geq3$, so $G_{X}$ and $G_{Y}$ are of
orders $\left\vert X\right\vert +1\geq3$ and $\left\vert Y\right\vert +1\geq
4$, respectively. The discussion of the preceding paragraph tells us that
$\left\vert \mathcal{B}(G_{X})\right\vert \geq2^{\left\vert X\right\vert +1}$
and $\left\vert \mathcal{B}(G_{Y})\right\vert \geq2^{\left\vert Y\right\vert
+2}$, so Proposition \ref{splitsolutions} tells us that
\[
\left\vert \mathcal{B}(G)\right\vert \geq\left\vert \mathcal{B}(G_{X}%
)\right\vert \left\vert \mathcal{B}(G_{Y})\right\vert /2\geq2^{\left\vert
X\right\vert +1}\cdot2^{\left\vert Y\right\vert +2}/2=2^{\left\vert
X\right\vert +\left\vert Y\right\vert +2}=2^{n+2}\text{,}%
\]
as claimed.
\end{proof}

Note that the lower bound $2^{n+2}$ is attained by the cycle-pendant graphs
discussed in\ Section 5.

\section{Step 2 of the proof: building a word}

In this section we complete the proof of Naji's theorem. We begin with a
technical observation.

\begin{lemma}
\label{toggle}Let $G$ be a Naji graph with an edge $e=vw$. Suppose $G$ and
$G-e$ share a Naji solution $\beta$. Then $\beta(x,v)\neq\beta(x,w)$ $\forall
x\in(N(v)\Delta N(w))-\{v,w\}$, and $\beta(x,v)=\beta(x,w)$ $\forall
x\not \in N(v)\Delta N(w)$. (Here $\Delta$ denotes the symmetric difference.)
\end{lemma}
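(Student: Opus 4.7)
The plan is to fix an arbitrary vertex $x \in V(G) - \{v,w\}$ and do a four-way case analysis based on the adjacency of $x$ to $v$ and $w$. Two cases ($x$ adjacent to both, or to neither) give $x \notin N(v) \Delta N(w)$ and must yield $\beta(x,v) = \beta(x,w)$; the other two ($x$ adjacent to exactly one of $v,w$) give $x \in N(v) \Delta N(w)$ and must yield $\beta(x,v) \neq \beta(x,w)$. The key observation is that, because $\beta$ satisfies the Naji equations of both $G$ and $G-e$, I can freely mix equations drawn from the two systems. Throughout, the type (a) equation $\beta(v,w) + \beta(w,v) = 1$ from $G$ is available because $vw \in E(G)$.

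The two easy cases come first. When $x$ is adjacent to neither $v$ nor $w$, the type (b) equation of $G$ applied to the edge $vw$ with nonneighbor $x$ gives $\beta(x,v) + \beta(x,w) = 0$ directly. When $x$ is adjacent to both $v$ and $w$, I switch to $G-e$: there, $vw \notin E$ and $vx, wx \in E$, so applying the type (c) equation with $x$ in the role of the ``central'' vertex gives $\beta(x,v) + \beta(x,w) + \beta(v,w) + \beta(w,v) = 1$; combining with the type (a) equation from $G$ that $\beta(v,w) + \beta(w,v) = 1$ yields $\beta(x,v) = \beta(x,w)$.

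The two remaining cases are symmetric, so I treat $x \in N(v) - N(w)$. Now the type (c) equation of $G$ for the triple $\{v,w,x\}$ (in which $vw, vx \in E$ and $wx \notin E$) reads $\beta(v,w) + \beta(v,x) + \beta(w,x) + \beta(x,w) = 1$, while in $G - e$ the same triple satisfies the hypotheses of a type (b) equation ($vx \in E$, $vw \notin E$, $wx \notin E$), giving $\beta(w,v) + \beta(w,x) = 0$. Using the two type (a) equations from $G$ to rewrite $\beta(v,w) = 1 + \beta(w,v)$ and $\beta(v,x) = 1 + \beta(x,v)$, the type (c) equation becomes $\beta(w,v) + \beta(x,v) + \beta(w,x) + \beta(x,w) = 1$; substituting $\beta(w,v) = \beta(w,x)$ collapses this to $\beta(x,v) + \beta(x,w) = 1$, as required. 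The main obstacle is simply bookkeeping — keeping straight which Naji equations live in $G$ and which in $G-e$, particularly in the last case where one invokes a type (c) equation of $G$ together with a type (b) equation of $G-e$ on the same triple of vertices.
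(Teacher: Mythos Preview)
Your proof is correct and follows essentially the same approach as the paper's: a case analysis on the adjacency of $x$ to $v$ and $w$, using a type (b) equation of $G$ when $x$ is adjacent to neither, a type (c) equation of $G-e$ combined with the type (a) equation of $G$ when $x$ is adjacent to both, and a type (c) equation of $G$ combined with a type (b) equation of $G-e$ in the asymmetric case. The only cosmetic difference is that the paper invokes the ``variant'' form of the type (c) equation directly (with $\beta(w,v)$ and $\beta(x,v)$ in place of $\beta(v,w)$ and $\beta(v,x)$), whereas you write out the type (a) substitutions explicitly.
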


\begin{proof}
Suppose $x\in(N(v)\Delta N(w))-\{v,w\}$; renaming $v$ and $w$ if necessary, we
may presume that $vx\in E(G)$ and $wx\notin E(G)$. Then the Naji equations for
$G$ require
\[
\beta(x,v)+\beta(w,v)+\beta(x,w)+\beta(w,x)=1
\]
while the Naji equations for $G-e$ require $\beta(w,v)=\beta(w,x)$. If
$x\notin N(v)\cup N(w)$ then the Naji equations for $G$ require $\beta
(x,v)=\beta(x,w)$. If $x\in N(v)\cap N(w)$ then the Naji equations for $G-e$
require%
\[
\beta(x,v)+\beta(x,w)+\beta(v,w)+\beta(w,v)=1
\]
while the Naji equations for $G$ require $\beta(v,w)+\beta(w,v)=1$.
\end{proof}

\begin{corollary}
\label{toggle2}Let $G$ be a Naji graph with an edge $e=vw$. Suppose $G$ and
$G-e$ share a Naji solution $\beta$. Then $G$ and $G-e$ have Naji solutions
$\beta_{1}$ and $\beta_{2}$ (respectively) such that (a) the only difference
between $\beta_{1}$ and $\beta_{2}$ is that $\beta_{1}(v,w)=\beta_{2}(w,v)$
and $\beta_{1}(w,v)=\beta_{2}(v,w)$ and (b) $\beta_{1}(x,v)=\beta_{1}(x,w)$
$\forall x\notin\{v,w\}$.
\end{corollary}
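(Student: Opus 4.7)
The natural candidate is to modify $\beta$ by $\delta(v)+\delta(w)$, but computed in the two different ambient graphs: set $\beta_{1}=\beta+\delta_{G}(v)+\delta_{G}(w)$ and $\beta_{2}=\beta+\delta_{G-e}(v)+\delta_{G-e}(w)$. Two applications of Proposition \ref{solution} show that $\beta_{1}$ is a Naji solution of $G$ and $\beta_{2}$ is a Naji solution of $G-e$.

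To verify (a), I compare the two versions of $\delta(v)$: they agree at every coordinate except $(w,v)$, where $\delta_{G}(v)(w,v)=1$ (because $vw\in E(G)$) while $\delta_{G-e}(v)(w,v)=0$ (because $vw\notin E(G-e)$). Similarly $\delta_{G}(w)$ and $\delta_{G-e}(w)$ agree except at $(v,w)$. Hence $\beta_{1}$ and $\beta_{2}$ agree on every coordinate except $(v,w)$ and $(w,v)$, at each of which they differ by $1$. Since the type~(a) equation for the edge $vw$ of $G$ gives $\beta_{1}(v,w)+\beta_{1}(w,v)=1$, the swap $\beta_{1}(v,w)=\beta_{2}(w,v)$ and $\beta_{1}(w,v)=\beta_{2}(v,w)$ follows at once.

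For (b), fix $x\notin\{v,w\}$. Unpacking the definitions, one sees $\delta_{G}(w)(x,v)=0=\delta_{G}(v)(x,w)$, so $\beta_{1}(x,v)+\beta_{1}(x,w)$ equals $\bigl(\beta(x,v)+\beta(x,w)\bigr)$ plus a correction equal to $1$ when $x$ is adjacent in $G$ to exactly one of $v,w$ and $0$ otherwise---that is, plus the indicator of $x\in N(v)\,\Delta\,N(w)$. By Lemma \ref{toggle}, $\beta(x,v)+\beta(x,w)$ takes precisely the same value, so the two contributions cancel and $\beta_{1}(x,v)=\beta_{1}(x,w)$.

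The whole argument is bookkeeping once the construction is guessed; the only real insight is that the discrepancy between $\delta_{G}$ and $\delta_{G-e}$ is concentrated exactly at the two coordinates $(v,w)$ and $(w,v)$. That is what lets a single graph-dependent correction deliver the swap required by~(a) without disturbing the column-symmetry demanded by~(b), and I do not foresee any nontrivial obstacle beyond keeping the four coordinate computations straight.
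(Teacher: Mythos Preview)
Your proposal is correct and matches the paper's proof exactly: the paper's entire argument is the one-line construction $\beta_{1}=\beta+\delta_{G}(v)+\delta_{G}(w)$ and $\beta_{2}=\beta+\delta_{G-e}(v)+\delta_{G-e}(w)$, with the verifications left implicit. Your unpacking of (a) and (b)---in particular, the observation that the $\delta_G$/$\delta_{G-e}$ discrepancy lives only at $(v,w)$ and $(w,v)$, and that the correction in (b) is exactly the indicator of $x\in N(v)\Delta N(w)$ which Lemma~\ref{toggle} cancels---is precisely what the paper intends the reader to supply.
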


\begin{proof}
Consider the Naji solutions $\beta_{1}=\beta+\delta_{G}(v)+\delta_{G}(w)$ and
$\beta_{2}=\beta+\delta_{G-e}(v)+\delta_{G-e}(w)$.
\end{proof}

\begin{theorem}
\label{step2}Let $G$ be a prime Naji graph with $\left\vert V(G)\right\vert
\geq5$. Then there is a double occurrence word $W$ with $\mathcal{I}(W)=G$.
\end{theorem}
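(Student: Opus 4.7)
The plan is induction on $n = |V(G)|$. For the base case $n = 5$, Bouchet showed that every prime graph of order $5$ is locally equivalent to $C_5$; since $C_5$ is a circle graph and the circle-graph property is preserved under local complementation, every prime graph of order $5$ is a circle graph. For the inductive step $n > 5$, apply Proposition~\ref{deletion2} to obtain a vertex $v$ such that $G - v$ is prime. If Proposition~\ref{deletion2} only supplies a degree-$2$ vertex $v$ with $G^v - v$ prime, replace $G$ by $G^v$ at the outset, which is legal because primeness, the Naji property, and the circle-graph property are all preserved under local complementation. By the inductive hypothesis, $G - v = \mathcal{I}(W')$ for some double occurrence word $W'$.

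It remains to insert $v^{in}$ and $v^{out}$ into $W'$ to produce a word $W$ with $\mathcal{I}(W) = G$. Fix any Naji solution $\beta$ of $G$; its restriction to $V(G-v)$ is a Naji solution of $G - v$. By Corollary~\ref{oneword} the word $W'$ is essentially unique, and by Theorem~\ref{step1} together with Proposition~\ref{circleN} every Naji solution of the prime circle graph $G - v$ arises from some orientation of $W'$. So orient $W'$ so that its associated Naji solution equals $\beta|(G-v)$. The values $\beta(v, x)$ now two-color each letter occurrence of $W'$: declare an occurrence \emph{red} if, in the desired word $W$, it should lie between $v^{in}$ and $v^{out}$, and \emph{blue} otherwise. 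Concretely, for $x \in N(v)$ exactly one of $x^{in}, x^{out}$ is red, selected by $\beta(v,x)$; for $x \notin N(v)$ both of $x^{in}, x^{out}$ share a color, again selected by $\beta(v,x)$.

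The central technical claim, and the main obstacle, is that this coloring of the cyclic word $W'$ has exactly two color changes, so that the red occurrences form a consecutive subword. (The $0$-color-change case is ruled out because it would force $N(v) = \varnothing$, which is impossible in a prime graph with $n \geq 5$.) Granting the claim, form $W$ by inserting $v^{in}$ and $v^{out}$ at the two color-change gaps, with the orientation chosen so that the red arc lies between $v^{in}$ and $v^{out}$. Then $\mathcal{I}(W) = G$ is immediate: $W'$ persists as a subword of $W$, so interlacements among $V(G-v)$ are preserved; for $x \in N(v)$ the two occurrences of $x$ have opposite colors and so lie on opposite sides of $v^{out}$, making them interlaced with $v$; for $x \notin N(v)$ the two occurrences share a color and so lie on the same side, so $v$ and $x$ are not interlaced.

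I expect the color-change claim to be the hard part. My approach would be by contradiction: if the coloring had four or more color changes, the cyclic word $W'$ would decompose into at least four monochromatic arcs, and the vertices of $G - v$ would partition according to which arcs contain their two occurrences. Using the Naji equations of $G$ that involve $v$ (especially types (b) and (c)), together with the structural observation that $\beta(x,-)$ is constant on each connected component of $G - N(x)$, one should be able to convert this block structure into enough bipartite rigidity on adjacencies between the blocks to exhibit a split $(X,Y)$ of $G$, contradicting primeness. The case analysis needed to assemble such a split—essentially recovering the split from the block decomposition produced by the hypothetical multi-arc coloring—is likely the longest and most delicate portion of the argument, and is where the paper's proof diverges most sharply from Bouchet's brute-force approach.
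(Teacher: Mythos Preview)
Your inductive scaffold is correct and matches the paper up through orienting $W'$ so that its associated Naji solution equals $\beta|(G-v)$. The divergence is that you work with a single distinguished vertex $v$, whereas the paper iterates Proposition~\ref{deletion} a second time to obtain \emph{two} vertices $v,w$ with both $G-v$ and $G-v-w$ prime, and then branches on whether $G-w$ is itself prime.

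In the paper's Case~1 ($G-w$ prime) the argument sidesteps your coloring analysis entirely. The inductive hypothesis supplies essentially unique words for $G-v$, $G-w$, and $G-v-w$, and the first two pin down the insertion points of $v$ and of $w$ into the third. All that can go wrong is the single bit recording whether $v$ and $w$ are interlaced; the wrong value is disposed of via Lemma~\ref{toggle}, Corollary~\ref{toggle2}, and Proposition~\ref{consecutive}. This is far lighter than a block-decomposition argument.

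In the paper's Case~3 ($G-w$ connected but not prime), the paper does run something close to your program, but only after first using the split of $G-w$ and local complementations to force $N(v)=\{w,x\}$. Even in that degree-two setting the analysis occupies fourteen claims, anchored to the concrete decomposition $W=w^{in}Ax^{in}Bw^{out}Cx^{out}D$. Your plan to establish the two-color-change property for arbitrary $N(v)$ is reasonable in outline, but you have not supplied the mechanism that actually converts a hypothetical four-arc coloring into a split of $G$; the paper's Claims~8--13 show how delicate this conversion already is when $|N(v)|=2$, and without that reduction the casework should be expected to be substantially worse. So your route is not wrong, but it discards precisely the two-vertex leverage that the paper exploits to keep the hard case small.
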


\begin{proof}
In Section 7 we verified that all simple graphs of order $\leq5$ are Naji
graphs. All of them are circle graphs, too; in particular, $C_{5}%
=\mathcal{I}(bacbdcedae)$.

If $\left\vert V(G)\right\vert =6$ then Proposition \ref{deletion} tells us
that after replacing $G$ with a locally equivalent graph, we may presume that
$G$ has a vertex $v$ such that $G-v$ is prime. Then $G-v$ is locally
equivalent to $C_{5}$ \cite{Bec}, so after further local complementation of
$G$ (if necessary) we may presume that $G-v=C_{5}=\mathcal{I}(bacbdcedae)$.
Every proper subset of $\{a,b,c,d,e\}$ can be achieved as an interlacement
neighborhood of $v$ in a double occurrence word obtained by inserting two
appearances of $v$ into $bacbdcedae$: for instance $vbavcbdcedae$,
$bvacvbdcedae$, $vbacvbdcedae$, $bvacbdcvedae$, and $bvacbdvcedae$ provide $v$
with the interlacement neighborhoods $\{a,b\}$, $\{a,c\}$, $\{a,b,c\}$,
$\{a,b,d\}$, and $\{a,b,c,d\}$ respectively. The interlacement neighborhood
$\{a,b,c,d,e\}$ cannot be achieved in this way because the result is the wheel
graph $W_{5}$, which is not a Naji graph (as we saw in\ section 5).

We proceed using induction on $\left\vert V(G)\right\vert >6$. Observe that
Proposition \ref{circles} tells us that our inductive hypothesis is that
\emph{all} Naji graphs smaller than $G$ are circle graphs (not just the prime
ones). Proposition \ref{deletion} tells us that after local complementation,
we may presume that $G$ has a vertex $v$ such that $G-v$ is prime, and
similarly some graph $H$ locally equivalent to $G-v$ has a vertex $w$ such
that $H-w$ is prime. By applying the local complementations needed to obtain
$H$ from $G-v$ to $G$ before deleting $v$, we may presume simply that $G-v$
and $G-v-w$ are both prime.

We distinguish three cases.

Case 1. $G-w$ is prime.

With Corollary \ref{oneword}, the inductive hypothesis guarantees that up to
cyclic permutation and reversal, there is a unique double occurrence word $W$
with $\mathcal{I}(W)=G-v-w$, and there are unique locations to insert two
appearances of $v$ and two appearances of $w$ into $W$ so as to obtain double
occurrence words whose interlacement graphs are $G-w$ and $G-v$. Using cyclic
permutations, we may presume that if we insert both $v$ and $w$ into $W$ (at
the appropriate unique locations) then $v$ appears first, and if $v$ and $w$
are not interlaced then the second appearance of $v$ precedes the first
appearance of $w$. That is, $W=ABCD$ and if we let $G\Delta(vw)$ denote the
graph obtained from $G$ by reversing the adjacency status of $v$ and $w$, then
either $W^{\prime}=vAvBwCwD$ or $W^{\prime\prime}=vAwBvCwD$ correctly
describes $G$ or $G\Delta(vw)$ through interlacement. Of course if $W^{\prime
}$ or $W^{\prime\prime}$ correctly describes $G$, we are done. Otherwise,
either $W^{\prime}$ or $W^{\prime\prime}$ correctly describes $G\Delta(vw)$,
but does not correctly describe $G$.

Suppose $W^{\prime}$ correctly describes $G\Delta(vw)$, but does not succeed
in describing $G$. That is, if $e=vw$ then $e\in E(G)$ and $\mathcal{I}%
(W^{\prime})=G-e$. If $B$ or $D$ is empty then $v$ and $w$ appear
consecutively in $W^{\prime}$; we may simply interchange their consecutive
appearances to obtain a double occurrence word whose interlacement graph is
$G$. $A$ and $C$ cannot be empty as neither $v$ nor $w$ can be isolated, so we
proceed with the assumption that $A$, $B$, $C$ and $D$ are all nonempty. We
aim for a contradiction.

Let $\beta$ be any Naji solution for $G$. Theorem \ref{step1} tells us that
there is an orientation of $W$ corresponding to the Naji solution
$\beta|(G-v-w)$ of $G-v-w$, and there are extensions of this orientation to
$G-v$ and $G-w$ (i.e., in/out designations of the appearances of $v$ and $w$
in\ $W^{\prime}$) such that the resulting orientations of double occurrence
words correspond to the Naji solutions $\beta|(G-v)$ and $\beta|(G-w)$ of
$G-v$ and $G-w$. The only possible differences between $\beta$ and the Naji
solution $\beta^{\prime}$ of $\mathcal{I}(W^{\prime})$ corresponding to the
resulting orientation of $W^{\prime}$ involve the values $\beta(v,w)$ and
$\beta(w,v)$.

We claim that it is possible to choose $\beta$ so that $\beta=\beta^{\prime}$.
To verify the claim, suppose we have a $\beta$ with $\beta(v,w)\neq
\beta^{\prime}(v,w)$ and $\beta(w,v)=\beta^{\prime}(w,v)$. Consider
$\hat{\beta}=\beta+\delta_{G}(w)$, and let $(\hat{\beta})^{\prime}$ be the
Naji solution obtained by replacing $\beta$ with $\hat{\beta}$\ in the
preceding paragraph. As $\beta|(G-w)=\hat{\beta}|(G-w)$, $\beta$ and
$\hat{\beta}$ result in the same orientation of $W$, and the same orientation
of the word obtained from $W$ by inserting $v$. The restriction of $\delta
_{G}(w)$ to $G-v$ is $\delta_{G-v}(w)$, of course, and the effect of adding
$\delta_{G-v}(w)$ on the corresponding orientation is to interchange the
appearances of $w^{in}$ and $w^{out}$. Notice that interchanging the
appearances of $w^{in}$ and $w^{out}$ in $W^{\prime}=vAvBwCwD$ has the effect
that $(\hat{\beta})^{\prime}(v,w)=\beta^{\prime}(v,w)$ and $(\hat{\beta
})^{\prime}(w,v)\neq\beta^{\prime}(w,v)$. As $\hat{\beta}(v,w)\neq\beta(v,w)$
and $\hat{\beta}(w,v)\not =\beta(w,v)$, it follows that $\hat{\beta
}(v,w)=(\hat{\beta})^{\prime}(v,w)$ and $\hat{\beta}(w,v)=(\hat{\beta
})^{\prime}(w,v)$. Similarly, if $\beta(v,w)=\beta^{\prime}(v,w)$ and
$\beta(w,v)\not =\beta^{\prime}(w,v)$ then $\hat{\beta}=\beta+\delta_{G}(v)$
will have $\hat{\beta}(v,w)=(\hat{\beta})^{\prime}(v,w)$ and $\hat{\beta
}(w,v)=(\hat{\beta})^{\prime}(w,v)$; and if $\beta(v,w)\not =\beta^{\prime
}(v,w)$ and $\beta(w,v)\not =\beta^{\prime}(w,v)$ then $\hat{\beta}%
=\beta+\delta_{G}(v)+\delta_{G}(w)$ will have $\hat{\beta}(v,w)=(\hat{\beta
})^{\prime}(v,w)$ and $\hat{\beta}(w,v)=(\hat{\beta})^{\prime}(w,v)$.

Having verified the claim, we now know that $G$ and $G-e$ share a Naji
solution, with $e$ the edge $vw$. Corollary \ref{toggle2} tells us that
consequently, $G-e=\mathcal{I}(W^{\prime})$ has a Naji solution $\beta_{2}$
with $\beta_{2}(x,v)=\beta_{2}(x,w)$ $\forall x\notin\{v,w\}$. As $G$ is
prime, it has no cutpoint. Consequently $G-e$ is connected and we may cite
Proposition \ref{consecutive} to conclude that $v$ and $w$ appear
consecutively in $W$. But this contradicts the assumption that $B$ and $D$ are
both nonempty.

Suppose now that $W^{\prime\prime}=vAwBvCwD$ correctly describes $G\Delta
(vw)$, but does not succeed in describing $G$. That is, if $e=vw$ then $e\in
E(\mathcal{I}(W^{\prime\prime}))$ and $G=\mathcal{I}(W^{\prime\prime})-e$. If
any one of $A$, $B$, $C$, $D$ is empty then $v$ and $w$ appear consecutively
in $W^{\prime\prime}$; we may interchange consecutive appearances of $v$ and
$w$ to obtain a double occurrence word whose interlacement graph is $G$. We
proceed with the assumption that $A$, $B$, $C$ and $D$ are all nonempty, and
derive a contradiction.

As before, any Naji solution $\beta$ for $G$ leads to an orientation of
$W^{\prime\prime}$ with the property that the corresponding Naji solution
$\beta^{\prime\prime}$ of $\mathcal{I}(W^{\prime\prime})$ can only differ from
$\beta$ in the values $\beta(v,w)$ and $\beta(w,v)$. We claim again that it is
possible to choose $\beta$ so that $\beta^{\prime\prime}=\beta$. If we have a
$\beta$ such that $\beta(v,w)\neq\beta^{\prime\prime}(v,w)$ and $\beta
(w,v)=\beta^{\prime\prime}(w,v)$ then again, we consider $\hat{\beta}%
=\beta+\delta_{G}(w)$ and the Naji solution $(\hat{\beta})^{\prime\prime}$
obtained from $\hat{\beta}$ in the same way $\beta^{\prime\prime}$ is obtained
from $\beta$. And again, $\beta$ and $\hat{\beta}$ result in the same
orientation of the word obtained from $W$ by inserting $v$. But now when we
interchange $w^{in}$ and $w^{out}$ in $W^{\prime\prime}=vAwBvCwD$ we have
$(\hat{\beta})^{\prime\prime}(v,w)\not =\beta^{\prime\prime}(v,w)$ and
$(\hat{\beta})^{\prime\prime}(w,v)\neq\beta^{\prime\prime}(w,v)$. As
$\hat{\beta}(v,w)=\beta(v,w)$ and $\hat{\beta}(w,v)\not =\beta(w,v)$, though,
it follows again that $\hat{\beta}(v,w)=(\hat{\beta})^{\prime\prime}(v,w)$ and
$\hat{\beta}(w,v)=(\hat{\beta})^{\prime\prime}(w,v)$. As before, the
possibility that $\beta(w,v)\neq\beta^{\prime\prime}(w,v)$ is handled by using
$\delta_{G}(v)$. The claim allows us to use Corollary \ref{toggle2} and
Proposition \ref{consecutive} to derive a contradiction.

Case 2. $G-w$ is not connected.

This case cannot occur as the prime graph $G$ cannot have a cutpoint.

Case 3. $G-w$ is connected and not prime. (This is the most delicate case.)

Let $(X,Y)$ be a split of $G-w$ with $v\in X$. If $\left\vert X\right\vert >2$
then $(X-v,Y)$ is a split of $G-w-v$, an impossibility as $G-w-v$ is prime.
Consequently $\left\vert X\right\vert =2$; let $X=\{v,x\}$. If $v$ and $x$ are
not adjacent we may pick any $y\in N(v)=N(x)$, and replace $G$ with the local
complement $G^{y}$. If $v$ and $x$ are adjacent and $N(v)\cap Y\neq
\varnothing$ we may replace $G$ with the local complement $G^{x}$. After these
replacements we see that we may assume that the degree of $v$ in $G-w$ is 1.
As $G$ is prime, the degree of $v$ in $G$ cannot be 1; hence $vw\in E(G)$, and
$N(v)=\{w,x\}$.

Replacing $G$ with $G^{v}$ if necessary, we may presume that $wx\in E(G)$.
According to Corollary \ref{solutions}, $G$ has a Naji solution $\beta$ with
$\beta(w,v)=\beta(x,v)=0$ and $\beta(u,v)=1$ $\forall u\notin\{v,w,x\}$. We
have no further need for the hypothesis that $G-v-w$ is prime, so it does no
harm to assume that $\beta(w,x)=1$; if $\beta(w,x)=0$, we simply interchange
the names of $w$ and $x$. Our job, then, is to produce a double occurrence
word whose interlacement graph is $G$, under the assumptions that $G$ is a
prime Naji graph, $G-v$ is a prime circle graph, $N(v)=\{w,x\}$,
$\beta(w,v)=\beta(x,v)=0$, $\beta(w,x)=1$ and $\beta(u,v)=1$ $\forall
u\notin\{v,w,x\}$.

According to Corollary \ref{oneword}, up to cyclic permutation and reversal
there is a unique double occurrence word $W$ whose interlacement graph is
$G-v$. Theorem \ref{step1} tells us that $W$ can be oriented so that the
corresponding Naji solution is the restriction $\beta|(G-v)$. Cyclically
permute $W$ so the first letter is $w^{in}$.\ As $w$ and $x$ are interlaced
and $\beta(w,x)=1$,%
\[
W=w^{in}Ax^{in}Bw^{out}Cx^{out}D
\]
for some subwords $A$, $B$, $C$ and $D$. We will use pairs of letters to
designate subsets of $V(G)$ in the natural way: $AB$ denotes the set of
vertices that appear once in $A$ and once in $B$, $CC$ denotes the set of
vertices that appear twice and $C$ and so on. Our aim is to prove that there
must be locations in $W$ where we can place two appearances of $v$ so that $v$
is interlaced with $w$ and $x$, but not with any other vertex. If any of $A$,
$B$, $C$, $D$ is empty then $w$ and $x$ appear consecutively in $W$ and we can
accomplish our aim by replacing a subword $wx$ or $xw$ with $vwxv$ or $vxwv$.
Consequently we may proceed with the assumption that none of $A$, $B$, $C$,
$D$ is empty.

The rest of the argument is a sequence of claims. During the discussion of the
claims we will often use the fact that if $y,z\in V(G)-\{v,w,x\}$ the Naji
equations require $\beta(v,y)=\beta(v,z)$ if $yz$ is an edge, or there is a
path from $y$ to $z$ in $G-v-w-x$.

Claim 1. $AC=\varnothing$.

proof: Suppose $a\in AC$. If $a^{in}$ appears in $A$ then $\beta(a,w)=0$ and
$\beta(a,x)=1$; if $a^{in}$ appears in $C$ these values are reversed. Either
way, $\beta(a,w)\neq\beta(a,x)$. As $av\notin E(G)$ and $aw,ax,vw,vx\in E(G)$,
the Naji equations of $G$ require%
\begin{align*}
\beta(a,w)+\beta(v,w)+\beta(a,v)+\beta(v,a)  &  =1\text{ and}\\
\beta(a,x)+\beta(v,x)+\beta(a,v)+\beta(v,a)  &  =1\text{.}%
\end{align*}
It cannot be that both equations hold as the first terms are unequal and the
other terms are all equal. {\ \rule{0.5em}{0.5em}}

Claim 2. If $y\in AA\cup AB\cup BB\cup BC\cup CC\cup CD\cup DD$ then $y^{in}$
precedes $y^{out}$, and if $y\in AD$ then $y^{out}$ precedes $y^{in}$.

proof: If $y\in AA\cup AB\cup BB\cup CC\cup CD\cup DD$ then $y$ is not
interlaced with $w$, so the Naji equations require $\beta(y,w)=\beta(y,v)=1$;
this in turn requires that $y^{in}$ precede $y^{out}$. If $y\in BC$ then $y$
is not interlaced with $x$, so $\beta(y,x)=\beta(y,v)=1$ and again this
requires that $y^{in}$ precede $y^{out}$. If $y\in AD$, instead, then
$\beta(y,x)=\beta(y,v)=1$ implies that $y^{in}$ appears in $D$.
{\ \rule{0.5em}{0.5em}}

Claim 3. If $a\in AA\cup AB\cup AD$ then $\beta(v,a)=0$.

proof: If $a\in AB$ then claim 2 implies that $\beta(a,x)=1$. As $a\in
N(x)-N(v)$, the Naji equations require%
\[
1=\beta(a,x)+\beta(v,x)+\beta(a,v)+\beta(v,a)=1+1+1+\beta(v,a)\text{.}%
\]
Similarly, if $a\in AD$ then $\beta(a,w)=1$ and $a\in N(w)-N(v)$, so the Naji
equations require%
\[
1=\beta(a,w)+\beta(v,w)+\beta(a,v)+\beta(v,a)=1+1+1+\beta(v,a)\text{.}%
\]
If $a\in AA$ then as $G-v$ is connected, there is a shortest path in $G-v$
from $a$ to some vertex not in $AA$. As $AC=\varnothing$ by claim 1, the
definition of interlacement makes it clear that the last vertex on this path
is in $AB$ or $AD$; as $\beta(v,y)=0$ for every such vertex $y$, and no vertex
on the path neighbors $v$, the Naji equations require that $\beta(v,a)=0$.
{\ \rule{0.5em}{0.5em}}

Claim 4. If $c\in BC\cup CC\cup CD$ then $\beta(v,c)=1$.

proof: The proof is closely analogous to that of claim 3.
{\ \rule{0.5em}{0.5em}}

Claim 5. In the $B$ portion of $W$, all vertices from $AB$ precede all
vertices from $BC$.

proof: If $a\in AB$ and $c\in BC$ were interlaced, a Naji equation would
require that $\beta(v,a)=\beta(v,c)$; but this would contradict claims 3 and
4. {\ \rule{0.5em}{0.5em}}

Claim 6. In the $D$ portion of $W$, all vertices from $CD$ precede all
vertices from $AD$.

proof: If $c\in CD$ and $a\in AD$ were interlaced, a Naji equation would
require that $\beta(v,a)=\beta(v,c)$; but this would contradict claims 3 and
4. {\ \rule{0.5em}{0.5em}}

Observe that claim 5 tells us we can partition the $B$ portion of $W$ as
$B_{0}B_{1}B_{2}$ in such a way that all vertices from $AB$ appear in $B_{0}$,
the last letter in $B_{0}$ is a vertex from $AB$, all vertices from $BC$
appear in $B_{2}$ and the first letter in $B_{2}$ is a vertex from $BC$. Claim
6 tells us that we can partition the $D$ portion of $W$ as $D_{0}D_{1}D_{2}$
in such a way that all vertices from $CD$ appear in $D_{0}$, the last letter
in $D_{0}$ is a vertex from $CD$, all vertices from $AD$ appear in $D_{2}$ and
the first letter in $D_{2}$ is a vertex from $AD$. (Some of the subwords
$B_{i},D_{i}$ may be empty.) Consequently we have%
\[
W=w^{in}Ax^{in}B_{0}B_{1}B_{2}w^{out}Cx^{out}D_{0}D_{1}D_{2}\text{.}%
\]

Claim 7. If $y$ appears in $B_{0}$ or $D_{2}$ then $\beta(v,y)=0$, and if $y$
appears in $B_{2}$ or $D_{0}$ then $\beta(v,y)=1$.

proof: Consider a vertex $y$ that appears in $B_{0}$. If $y\in AB$ then
$\beta(v,y)=0$ by claim 3. If $y\in BD$ then $y$ is interlaced with the vertex
$a\in AB$ that appears at the end of $B_{0}$, so $\beta(v,y)=\beta(v,a)=0$.
The same argument applies if $y\in BB$ appears only once in $B_{0}$. If $y\in
BB$ appears twice in $B_{0}$, then as $G$ is connected, some path must lead
from $y$ to a vertex $z$ that appears only once in $B_{0}$. Consider such a
path of shortest length; then all the vertices on the path before $z$ are,
like $y$, elements of $BB$ that appear twice in $B_{0}$. Then $\beta(v,z)=0$
by the earlier parts of the argument, and the Naji equations require that
$\beta(v,y)=\beta(v,z)$. Similar arguments apply in $B_{2}$, $D_{0}$ and
$D_{2}$. {\ \rule{0.5em}{0.5em}}

Notice that claim 7 implies that vertices from $B_{0}$ or $D_{2}$ cannot
appear in $B_{2}$ or $D_{0}$, and vice versa. Consequently if $B_{1}$ and
$D_{1}$ are both empty then the word%
\[
w^{in}Ax^{in}B_{0}vB_{2}w^{out}Cx^{out}D_{0}vD_{2}%
\]
has $G$ as its interlacement graph. Our aim is to show that if $B_{1}$ and
$D_{1}$ are not empty, they have \textquotedblleft centers\textquotedblright%
\ where we can insert the desired appearances of $v$.

To locate these centers we repartition $B$ and $D$. Let $B=B^{1}B^{2}%
B^{3}...B^{k}$ in such a way that each $B^{i}$ is nonempty and the value of
$\beta(v,-)$ is constant on each $B^{i}$, with $\beta(v,-)$ changing when we
pass from $B^{i}$ to $B^{i+1}$. The fact that $B$ is nonempty tells us that
$k\geq1$. Claim 7 tells us that $B^{1}$ contains $B_{0}$ and $B^{k}$ contains
$B_{2}$. (N.b. $B_{0}$ or $B_{2}$ might be empty.) Partition $D$ as
$D^{1}...D^{\ell}$ in a similar way.

Claim 8. For each $i$, there is a vertex that appears precisely once in
$B^{i}$. \ Similarly, in each $D^{j}$ some vertex appears exactly once.

proof: If every vertex that appears in $B^{i}$ appears twice in $B^{i}$, then
no vertex that appears in $B^{i}$ is interlaced with any vertex that does not
appear in $B^{i}$. This is impossible, as $B^{i}$ is not empty and $G$ is
connected. The same observation applies to $D^{j}$. {\ \rule{0.5em}{0.5em}}

Claim 9. If $i\neq j$ then no vertex appears in both $B^{i}$ and $B^{j}$, and
no vertex appears in both $D^{i}$ and $D^{j}$.

proof: Suppose $i<j$, a vertex $y$ appears in both $B^{i}$ and $B^{j}$, and
$j-i$ is as small as possible. Claim 8 tells us that there is a vertex $z$,
which appears precisely once in $B^{i+1}$. The minimality of $j-i$ guarantees
that the other appearance of $z$ is outside the subword $B^{i}B^{i+1}...B^{j}%
$, so $y$ and $z$ are interlaced. The Naji equations then require
$\beta(v,y)=\beta(v,z)$, contradicting the definition of $B^{1}B^{2}%
B^{3}...B^{k}$, which guarantees $\beta(v,y)\neq\beta(v,z)$. The same argument
applies to $D^{i}$ and $D^{j}$. {\ \rule{0.5em}{0.5em}}

Claim 10. Suppose $y$ appears precisely once in $B^{i}$. If $i=1$ then $y\in
AB\cup BD$, if $1<i<k$ then $y\in BD$, and if $i=k$ then $y\in BC\cup BD$.
Similarly, if $z$ appears precisely once in $D^{j}$ then $j=1$ implies $z\in
BD\cup CD$, $1<j<\ell$ implies $z\in BD$, and $j=\ell$ implies $z\in AD\cup
BD$. {\ \rule{0.5em}{0.5em}}

proof: Claim 9 tells us that $y\notin BB$, as every element of $BB$ appears
twice in the same one of $B^{1},...,B^{k}$. The assertion regarding $B$
follows because all appearances in $B$ of elements of $AB$ occur in $B_{0}$,
which is a subword of $B^{1}$; and all appearances in $B$ of elements of $BC$
occur in $B_{2}$, which is a subword of $B^{k}$. The assertion regarding $D$
is verified in the same way. {\ \rule{0.5em}{0.5em}}

Claim 11. Neither $k>2$ nor $\ell>2$ is possible.

proof: Suppose $k>2$. Let $Y^{1}$, $Y^{2}$, $Y^{3}\subseteq V(G)$ be the
subsets consisting of vertices that appear precisely once in $B^{1}$, $B^{2}$
and $B^{3}$ respectively. Claims 8 and 9 tell us that $Y^{1}$, $Y^{2}$, and
$Y^{3}$ are nonempty and pairwise disjoint, and claim 10 guarantees that
$Y^{2}\subseteq BD$.

Subclaim 11a. All the second appearances of elements of $Y^{2}$ appear in the
same $D^{j}$.

proof: If $y,y^{\prime}\in Y^{2}$ appear in $D^{i}$ and $D^{j}$ with $i<j$
then consider a vertex $z$ that appears once in $D^{i+1}$; we have
$\beta(v,z)\neq\beta(v,y)=\beta(v,y^{\prime})$. Claim 10 tells us that $z\in
BD$, so the other appearance of $z$ is in $B$; if $z$ appears in $B^{1}$ it is
interlaced with $y^{\prime}$, and if $z$ does not appear in $B^{1}$ then it is
interlaced with $y$. Either way we have a contradiction as the Naji equations
require that interlaced elements of $BD$ have the same $\beta(v,-)$ value.
{\ \rule{0.5em}{0.5em}}

We now let $\tau$ denote the index of the particular $D^{j}$ that includes all
the second appearances of elements of $Y^{2}$. No element of $Y^{2}$ can be
interlaced with an element of $Y^{1}\cup Y^{3}$, as the value of $\beta(v,-)$
on $Y^{2}$ is different from the value on $Y^{1}\cup Y^{3}$. Consequently if
$y\in Y^{1}$ then the other appearance of $y$ must either occur in $D$ after
$D^{\tau}$ (if $y\in BD$) or in $A$ (if $y\in AD$). Also, if $y\in Y^{3}$ then
the other appearance of $y$ must occur before $D^{\tau}$, either in $D$ (if
$y\in BD$) or in $C$ (if $y\in BC$).

Subclaim 11b. Every vertex that appears precisely once in $D^{\tau}$ also
appears in $Y^{2}$.

proof: Suppose $d$ appears once in $D^{\tau}$ and the other appearance of $d$
is not in $Y^{2}$. Claim 9 tells us that the other appearance of $d$ is not in
$D$. If the other appearance of $d$ is in $A$, then $d\in AD$ so
$\beta(v,d)=0$ by claim 3. Then $\beta(v,y)=0$ $\forall y\in Y^{2}$ and
$\beta(v,y^{\prime})=1$ $\forall y^{\prime}\in Y^{1}$; consequently no vertex
that appears in $Y^{1}$ is an element of $AD$, so every vertex $y^{\prime}$
that appears in $Y^{1}$ is an element of $BD$. According to the paragraph
before the statement of this subclaim, every $y^{\prime}$ that appears in
$Y^{1}$ appears in $D$ after $D^{\tau}$; it follows that $d$ is interlaced
with every such $y^{\prime}$. But this is impossible because the $\beta(v,-)$
values do not match. A similar line of reasoning applies if the other
appearance of $d$ is in $C$: $\beta(v,d)=1$ by claim 4, so $\beta(v,y)=1$
$\forall y\in Y^{2}$ and $\beta(v,y^{\prime})=0$ $\forall y^{\prime}\in Y^{3}%
$; hence no element of $Y^{3}$ lies in $BC$, so every element of $Y^{3}$ lies
in $BD$. It follows that every element of $Y^{3}$ appears in $D$ before
$D^{\tau}$, by the paragraph before the statement of this subclaim; but then
every such element is interlaced with $d$, and again the $\beta(v,-)$ values
prohibit this. Consequently $d$ must appear in $B$. But then $d$ appears after
$Y^{3}$ in $B$ and also after $Y^{3}$ in $D$, an impossibility because $d$
cannot be interlaced with any element of $Y^{3}$. {\ \rule{0.5em}{0.5em}}

Now consider the subwords $Y^{2}$ and $D^{\tau}$ of $W$. Subclaims 11a and 11b
tell us that if $X$ is the set of vertices of $G-v$ that appear outside
$Y^{2}D^{\tau}$ and $Y$ is the set of vertices that appear within
$Y^{2}D^{\tau}$, then $X\cap Y=\varnothing$. $G-v$ is prime, so $(X,Y)$ cannot
be a split. As $X$ contains $w$ and $x$ along with all the vertices that
appear in $Y^{1}$ and $Y^{3}$, $\left\vert X\right\vert >2$; hence $\left\vert
Y\right\vert =1$. That is, $Y^{2}$ and $D^{\tau}$ are both of length 1, and
mention the same vertex.

Subclaim 11c. No vertex of $G-v-w-x$ is interlaced with the vertex $y$ that
appears in $Y^{2}$ and $D^{\tau}$.

proof: Suppose $z$ is interlaced with $y$. Then $z\notin AA\cup CC$. If $z\in
BB\cup DD$ then claim 9 tells us that $z$ appears twice in $Y^{2}$ or
$D^{\tau}$, an impossibility as no vertex other than $y$ appears in either
$Y^{2}$ or $D^{\tau}$. If $z\in AB$ then $z\in B^{1}$, so $\beta(v,z)\neq
\beta(v,y)$; this is not possible if $yz\in E(G)$. If $z\in AD$ then $z$
appears in $A$ before the appearance of $y$ in $B$, so $z$ also appears before
$y$ in $D$; as the vertices of $AD$ all appear in $D^{\ell}$ it follows that
$\tau=\ell$. But then both $y$ and $z$ appear in $D^{\tau}$, contradicting the
fact that only $y$ appears in $D^{\tau}$. If $z\in BC$ then the appearance of
$z$ in $C$ precedes the appearance of $y$ in $D$, so the appearance of $z$ in
$B$ must precede the appearance of $y$ in $B$; but then $z$ appears in $B^{1}$
so $\beta(v,z)\neq\beta(v,y)$, an impossibility if $y$ and $z$ are neighbors
in $G$. If $z\in BD$ then $\beta(v,z)=\beta(v,y)$, so $z$ appears in some
$B^{2i}$ with $i>1$. Consider a vertex $b$ that appears once in $B^{3}$. To
avoid being interlaced with $z$, $b$ must appear after $z$ in $D$. To avoid
being interlaced with $y$, $b$ must appear before $y$ in $D$. Hence $z$
appears before $y$ in $D$; but this cannot be the case as $z$ appears after
$y$ in $B$ and $yz\in E(G)$. The only remaining possibility is $z\in CD$. Such
a $z$ would have to appear in or after $D^{\tau+2}$, in order to be interlaced
with $y$. Suppose $b$ appears once in $B_{3}$; then $b$ must appear in $D$
before $D^{\tau}$, to avoid being interlaced with $y$. Consequently $b$ and
$z$ are interlaced, an impossibility as $\beta(v,z)=\beta(v,y)\neq\beta(v,b)$.
{\ \rule{0.5em}{0.5em}}

Subclaim 11c completes the proof that $k>2$ is impossible, for it implies that
$N(y)=\{w,x\}$; this in turn implies that $\{v,y\}$ is a split of $G$.

The assertion that $\ell>2$ is impossible can be proven in the same way, so we
are done with claim 11. {\ \rule{0.5em}{0.5em}}

Claim 12. At least one of $AB,AD$ is not empty, and at least one of $BC,CD$ is
not empty. Consequently, $k+\ell\geq2$.

proof: If $AB=AD=\varnothing$ then as $A$ is not empty, it must be that
$AA\not =\varnothing$. But no edge of $G$ can connect a vertex of $AA$ to a
vertex outside $AA$, contradicting the fact that $G$ is connected. Hence at
least one of $AB,AD$ is not empty. If $AB\neq\varnothing$ then $k\geq1$, as
the vertices of $AB$ all appear in $B^{1}$; and if $AD\neq\varnothing$ then
$\ell\geq1$, as the vertices of $AD$ all appear in $D^{\ell}$. Similarly, at
least one of $BC,CD$ is not empty, so at least one of $B^{k},D^{1}$ is not
empty. Finally, no vertex of $AB\cup AD$ can appear in the same set $B^{i}$ or
$D^{j}$ as a vertex of $BC\cup CD$, because the $\beta(v,-)$ values do not
match. {\ \rule{0.5em}{0.5em}}

At this point we change notation slightly. If $k=2$ then we let $B(i)=B^{i}$
for $i\in\{1,2\}$. If $k=1$ and the value of $\beta(v,-)$ on $B^{1}$ is 0, we
let $B(1)=B^{1}$, and we let $B(2)$ denote the empty word. If $k=1$ and the
value of $\beta(v,-)$ on $B^{1}$ is 1, we let $B(2)=B^{1}$ and we let $B(1)$
denote the empty word. Similarly, we define $D(1)=D^{1}$ and $D(2)=D^{2}$ if
$\ell=2$, and if $\ell=1$ we define $D(1)$ and $D(2)$ so that one is empty,
the other is $D^{1}$ and the value of $\beta(v,-)$ on $D(i)$ is $i$ (mod 2).
We now have
\[
W=w^{in}Ax^{in}B(1)B(2)w^{out}Cx^{out}D(1)D(2)\text{,}%
\]
where up to two of $B(1),B(2),D(1),D(2)$ may be empty.

Claim 13. The value of $\beta(v,-)$ is 0 on $B(1)$ and $D(2)$, and 1 on $B(2)$
and $D(1)$.

proof: If $AB\neq\varnothing$ then the claim is true for $B(1)$, as every
vertex $a\in AB$ appears in $B(1)$ and has $\beta(v,a)=0$. Necessarily then
the claim is also true for $B(2)$, as the values of of $\beta(v,-)$ on $B(1)$
and $B(2)$ are different. Similarly, if $AD\neq\varnothing$ then the claim is
true for $D(1)$ and $D(2)$, as every $a\in AD$ appears in $D(2)$ and has
$\beta(v,a)=0$. The same reasoning shows that the claim holds in $B$ if
$BC\neq\varnothing$, and the claim holds in $D$ if $CD\neq\varnothing$.

Claim 12 now assures us that claim 13 holds in at least one of $B$ and $D$;
suppose it holds in $B$. If the claim does not hold in $D$ then $D(1)=D^{1}$
and $D(2)=D^{2}$ are both nonempty, the value of $\beta(v,-)$ on $D(1)$ is 0,
and the value of $\beta(v,-)$ on $D(2)$ is 1. Let $d_{1}$ and $d_{2}$ be
vertices that appear precisely once in $D(1)$ and $D(2)$, respectively. Then
$d_{1},d_{2}\notin DD$, of course, and according to claims 3 and 4, the values
of $\beta(v,d_{1})$ and $\beta(v,d_{2})$ indicate that $d_{1}\notin CD$ and
$d_{2}\notin AD$. Consequently $d_{1}\in AD\cup BD$ and $d_{2}\in BD\cup CD$.
As $\beta(v,d_{1})\neq\beta(v,d_{2})$, $d_{1}$ and $d_{2}$ are not interlaced;
$d_{1}$ precedes $d_{2}$ in $D$, so $d_{2}$ must precede $d_{1}$ outside $D$.
This is impossible if $d_{1}\in AD$ or $d_{2}\in CD$, so it must be that
$d_{1},d_{2}\in BD$. But then the values of $\beta(v,d_{i})$ indicate that
$d_{1}$ appears in $B(1)$ and $d_{2}$ appears in $B(2)$, so $d_{2}$ does not
precede $d_{1}$ outside $D$. By contradiction, we conclude that if claim 13
holds in $B$ it also holds in $D$. The converse is justified in the same way.
{\ \rule{0.5em}{0.5em}}

Claim 14. $G$ is the interlacement graph of the double occurrence word%
\[
W^{\prime}=w^{in}Ax^{in}B(1)vB(2)w^{out}Cx^{out}D(1)vD(2)\text{.}%
\]

proof: A vertex $a$ that appears in $A$ has $\beta(v,a)=0$, by claims 1 and 3,
so it may appear twice in $A$, or once in $A$ and once in $B(1)$, or once in
$A$ and once in $D(2)$. In any case it is not interlaced with $v$ in
$W^{\prime}$. Similarly a vertex that appears in $C$ may appear again in $C$,
or appear in $B(2)$ or $D(1)$; in any case it is not interlaced with $v$. A
vertex\ $b$ that appears in $B$ and $D$ must appear either in $B(1)$ and
$D(2)$ (if the value of $\beta(v,b)$ is 0) or in $B(2)$ and $D(1)$ (if the
value of $\beta(v,b)$ is 1); again, neither case allows it to be interlaced
with $v$. Finally, an element of $BB$ or $DD$ is not interlaced with $v$.
Consequently $w$ and $x$ are the only vertices interlaced with $v$ in
$W^{\prime}$.
\end{proof}

\section{Bipartite graphs}

Bipartite circle graphs are special for two reasons, both connected with
planarity. One special property is geometric: bipartite circle graphs
correspond to planar 4-regular graphs \cite{RR}. (All circle graphs correspond
to 4-regular graphs, as a double occurrence word naturally gives rise to an
Euler circuit in a 4-regular graph.) Another special property is matroidal: a
bipartite graph with adjacency matrix $A$ is a circle graph if and only if the
binary matroid represented by $%
\begin{pmatrix}
I & A
\end{pmatrix}
$ is planar \cite{Fi}; here $I$ is an identity matrix. (This matroid is the
direct sum of a pair of mutually dual matroids, so it is planar if and only if
it is graphic or cographic.)

At the end of \cite{GG}, Geelen and Gerards deduce an algebraic
characterization of planar matroids from their characterization of graphic
matroids. The following theorem provides a bridge between their result and
Naji's theorem.

\begin{theorem}
Let $G$ be a bipartite graph with vertex classes $V_{1}$ and $V_{2}$. Then $G$
is a circle graph if and only if this system of equations has a solution over
$GF(2)$.

(a) If $v,w,x$ are three different elements of the same vertex class and
$N(v)\cap N(w)\not \subseteq N(x)$, then $\beta(x,v)=\beta(x,w)$.

(b) If $v,w,x$ are three different elements of the same vertex class and
$N(v)\cap N(w)\cap N(x)\neq\varnothing$, then%
\[
\beta(v,w)+\beta(w,v)+\beta(v,x)+\beta(x,v)+\beta(w,x)+\beta(x,w)=1.
\]

\end{theorem}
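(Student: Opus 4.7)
The plan is to invoke Naji's theorem, which has already been proved in the paper: $G$ is a circle graph if and only if Naji's equations for $G$ admit a $GF(2)$-solution. It therefore suffices to show that, for bipartite $G$, Naji's system has a solution if and only if the system (a)--(b) does.

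\emph{Forward implication.} Suppose $\beta$ is a Naji solution and restrict it to pairs of same-class vertices. For (a): given $v,w,x$ in one class and a witness $y \in (N(v)\cap N(w)) - N(x)$, observe that $y$ lies in the opposite class, so $vy, wy \in E(G)$ while $vx, wx, xy \notin E(G)$. Apply Naji's type (b) equation to the edge $vy$ with non-neighbor $x$, then to the edge $wy$ with $x$, and sum over $GF(2)$; the $\beta(x,y)$ terms cancel and one is left with $\beta(x,v) + \beta(x,w) = 0$. For (b): pick a common neighbor $y$ of $v,w,x$ and sum the three Naji type (c) equations coming from the edge pairs $(yv,yw)$, $(yv,yx)$, $(yw,yx)$ (the required non-edges $vw, vx, wx$ hold because $v,w,x$ share a class). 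In this sum each $\beta(y,\cdot)$ term appears twice and cancels, leaving exactly (b).

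\emph{Backward implication.} Given $\beta$ satisfying (a) and (b), I would build a full Naji solution $\hat\beta$ in two stages. First, for each cross-class non-edge $(x,u)$ with $x \in V_i$ and $u \in V_{3-i} \setminus N(x)$, define $\hat\beta(x,u) := \beta(x,y)$ for any $y \in N(u) \cap V_i$. Condition (a), applied to $y, y' \in N(u) \cap V_i$ with ``third vertex'' $x$ (using $u \in N(y)\cap N(y')$ and $u \notin N(x)$), shows this value is independent of the choice of $y$; if $u$ is isolated set the value arbitrarily. With this definition, every Naji type (b) equation---whose edge is necessarily cross-class in a bipartite graph---holds automatically. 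Second, the Naji type (c) equation at any vertex $v$ reduces, for $w,x \in N(v)$, to $\hat\beta(v,w) + \hat\beta(v,x) = 1 + \beta(w,x) + \beta(x,w)$, so it determines the function $w \mapsto \hat\beta(v,w)$ on $N(v)$ up to a single constant $\alpha_v \in GF(2)$. Condition (b) is precisely what makes these type-(c) relations mutually consistent across all triples $w,x,y \in N(v)$ (with $v$ as the common neighbor). Finally, Naji's type (a) equations impose one linear relation $\alpha_v + \alpha_w = c_{vw}$ per edge.

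\emph{Main obstacle.} The delicate step is showing that the system $\alpha_v + \alpha_w = c_{vw}$ is globally consistent, i.e.\ that around every cycle $v_1 v_2 \cdots v_{2k} v_1$ of $G$ the sum $\sum_i c_{v_i v_{i+1}}$ vanishes modulo $2$. Bipartiteness handles the constant part, since $2k$ is even. For the remaining $\beta$-contributions, after choosing references the sum splits by vertex class into contributions of the form $\beta(v_{i+2}, v_i) + \beta(v_i, v_{i+2})$ over consecutive same-class cycle-vertices; within each class, distance-$2$ cycle-vertices share a cycle-neighbor that lies outside $N$ of the other same-class cycle-vertices, so (a) chains these values together and forces each intra-class value to appear an even number of times in the total sum. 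Once consistency is established, arbitrary choices of the $\alpha_v$'s along a spanning tree propagate to all edges, and a routine check verifies that every remaining Naji equation of types (a), (b), and (c) is satisfied by $\hat\beta$.
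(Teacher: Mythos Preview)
Your forward direction and the first stage of the backward direction (defining $\hat\beta$ on cross-class non-edges) are correct and essentially identical to the paper's. The paper diverges from you in the second stage: rather than introducing free constants $\alpha_v$ and reducing Naji's type (a) equations to a cycle-consistency problem, it fixes a spanning-tree ordering of $V_1\cup V_2$ and defines the edge-values recursively, using equation (b) of the statement at each step to show the value is independent of the chosen reference neighbor. Your cohomological reformulation is perfectly reasonable, and arguably more transparent, but the consistency check you give is incomplete.

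The gap is in your sentence ``distance-$2$ cycle-vertices share a cycle-neighbor that lies outside $N$ of the other same-class cycle-vertices.'' This is false for a general cycle of $G$: if the cycle has chords, the shared cycle-neighbor $u_i$ of $u_{i-1}$ and $u_{i+1}$ may be adjacent to some other same-class $u_j$, and then condition (a) tells you nothing about $\beta(u_j,u_{i-1})$ versus $\beta(u_j,u_{i+1})$. What you need is the preliminary observation that it suffices to check \emph{induced} cycles, since any cycle with a chord is the $GF(2)$-sum of two strictly shorter cycles, and hence induced cycles span the cycle space. Once you restrict to an induced $2k$-cycle $v_1w_1v_2w_2\cdots v_kw_k$, your argument does go through: for fixed $j$ and any $i\neq j$, the cycle-vertex between $v_i$ and $v_{i\pm1}$ lies in $N(v_i)\cap N(v_{i\pm1})\setminus N(v_j)$ by inducedness, so (a) gives $\beta(v_j,v_i)=\beta(v_j,v_{i\pm1})$; thus $\beta(v_j,-)$ is constant (say $a_j$) on the other $v_i$, and the intra-class contribution $\sum_i[\beta(v_i,v_{i+1})+\beta(v_{i+1},v_i)]=\sum_i(a_i+a_{i+1})=0$. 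With this repair your approach is sound.
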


\begin{proof}
The equations mentioned in the statement follow directly from the Naji
equations. For (a), note that if $y\in N(v)\cap N(w)-N(x)$, then the Naji
equations require $\beta(x,v)=\beta(x,y)$ and $\beta(x,y)=\beta(x,w)$. For
(b), note that if $y\in N(v)\cap N(w)\cap N(x)$ we may add together the
following Naji equations.%
\begin{align*}
\beta(y,v)+\beta(y,w)+\beta(v,w)+\beta(w,v)  &  =1\\
\beta(y,v)+\beta(y,x)+\beta(v,x)+\beta(x,v)  &  =1\\
\beta(y,w)+\beta(y,x)+\beta(w,x)+\beta(x,w)  &  =1
\end{align*}

For the converse, suppose the equations mentioned in the statement of this
theorem have a solution $\beta$. Note that the equations require only that
$\beta(v,w)$ be defined when $v$ and $w$ are elements of the same vertex
class. In order to build a Naji solution we must define values of $\beta(v,w)$
when $v$ and $w$ are not elements of the same vertex class. According to
Proposition \ref{components}, we may presume that $G$ is connected.

Suppose $v\in V_{1}$, $w\in V_{2}$ and $vw\notin E(G)$. As $G$ is connected,
there are $v^{\prime}\in V_{1}$ and $w^{\prime}\in V_{2}$ such that
$v^{\prime}w,vw^{\prime}\in E(G)$. Define $\beta(v,w)=\beta(v,v^{\prime})$ and
$\beta(w,v)=\beta(w,w^{\prime})$. The equations of part (a) of the statement
guarantee that these values are well defined. Moreover, these definitions
satisfy all the Naji equations listed under (b) in Definition \ref{najieq}.

We index the elements of $V_{1}$ and $V_{2}$, $V_{1}=\{v_{1},...,v_{a}\}$ and
$V_{2}=\{w_{1},...,w_{b}\}$, in such a way that $v_{1}w_{1}\in E(G)$ and for
$i>1$,%
\[
N(v_{i})\cap\{w_{1},...,w_{i-1}\}\neq\varnothing\neq N(w_{i})\cap
\{v_{1},...,v_{i}\}\text{.}%
\]
One way to construct such an indexing recursively is to find a leaf $v$ of a
spanning tree $T$ for $G$, find an indexing of the specified type for $T-v$,
and then list $v$ as $v_{a}$ or $w_{b}$ according to whether $v\in V_{1}$ or
$v\in V_{2}$.

To define the values of $\beta(v,w)$ with $vw\in E(G)$, begin by defining
$\beta(v_{1},w_{1})$ $=0$ and $\beta(w_{1},v_{1})$ $=1$. If $i>1$ and
$v_{1}w_{i}\in E(G)$, define $\beta(v_{1},w_{i})=\beta(v_{1},w_{1}%
)+\beta(w_{i},w_{1})+\beta(w_{1},w_{i})+1$ and $\beta(w_{i},v_{1}%
)=1+\beta(v_{1},w_{i})$. Interchange the letters $v$ and $w$ to define
$\beta(w_{1},v_{i})$ and $\beta(v_{i},w_{1})$ if $i>1$ and $w_{1}v_{i}\in
E(G)$. It is easy to check that all the Naji equations involving $v_{1}$ or
$w_{1}$ are satisfied. Suppose $i_{0}>1$ and all values of $\beta(v_{i}%
,w_{j})$ and $\beta(w_{j},v_{i})$ have been defined when $i<i_{0}$ or
$j<i_{0}$, in such a way that all Naji equations are satisfied. By hypothesis,
$v_{i_{0}}$ has a neighbor $w_{j_{0}}$ with $j_{0}<i_{0}$. If\ $j>i_{0}$ and
$v_{i_{0}}w_{j}\in E(G)$, define $\beta(v_{i_{0}},w_{j})=\beta(v_{i_{0}%
},w_{j_{0}})+\beta(w_{j},w_{j_{0}})+\beta(w_{j_{0}},w_{j})+1$; equation (b) of
the statement guarantees that this definition is independent of the choice of
a particular $w_{j_{0}}\in N(v_{i_{0}})$. Also define $\beta(w_{j},v_{i_{0}%
})=1+\beta(v_{i_{0}},w_{j})$. The equations of the statement imply that all
Naji equations involving $v_{i_{0}}$ are satisfied. The values of
$\beta(w_{i_{0}},v_{j})$ and $\beta(v_{j},w_{i_{0}})$ when $j>i_{0}$ and
$v_{j}w_{i_{0}}\in E(G)$ are defined in the same way, mutatis mutandi.
\end{proof}

We should mention that a different way to reformulate Naji's theorem for
bipartite graphs was given by Bouchet \cite{Bbip}.

\section{Permutation graphs}

Here is a familiar definition, discussed for instance by Golumbic
\cite[Chapter 7]{Go}.

\begin{definition}
Let $\pi$ be a permutation of $\{1,...,n\}$. Then the corresponding
\emph{permutation graph} has vertices $1,...,n$, with an edge $ij$ whenever
$i<j$ and $\pi(i)>\pi(j)$.
\end{definition}

Naji's theorem leads to the following algebraic characterization of
permutation graphs.

\begin{theorem}
A simple graph $G$ is a permutation graph if and only if this system of
equations has a solution over $GF(2)$.

(a) If $v$ and $w$ are two distinct vertices then $\beta(v,w)+\beta(w,v)=1$.

(b) If $v,w,x$ are three distinct vertices such that $vw,vx\notin E(G)$ and
$wx\in E(G)$ then $\beta(v,w)+\beta(v,x)=0$.

(c) If $v,w,x$ are three distinct vertices such that $vw,vx\in E(G)$ and
$wx\not \in E(G)$ then $\beta(v,w)+\beta(v,x)=0$.
\end{theorem}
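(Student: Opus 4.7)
The plan is to show that the permutation system is equivalent to the Naji system augmented by the single extra requirement that $\beta(v,w)+\beta(w,v)=1$ for every pair of distinct vertices (not merely for edges). Indeed, permutation equation (b) is literally Naji's equation (b) after a renaming of variables; moreover, under permutation (a) we have $\beta(w,x)+\beta(x,w)=1$ even when $wx\notin E(G)$, so Naji's type (c) equation reduces to the two-term permutation (c), and conversely Naji (c) follows from permutation (c) together with permutation (a) applied to $wx$.

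For the forward direction I assume $G$ is a permutation graph and realize it as $\mathcal{I}(W)$ for a double occurrence word $W=W_1W_2$ in which each letter appears once in $W_1$ and once in $W_2$. Orienting so that the copy in $W_1$ is $v^{in}$, Proposition \ref{circleN} yields a Naji solution $\beta$. The augmented type (a) is then a one-line computation: for any two vertices $v,w$, both $v^{out}$ and $w^{out}$ lie in $W_2$, and $\beta(v,w)=0$ precisely when $w^{out}$ precedes $v^{out}$ in $W_2$, so exactly one of $\beta(v,w),\beta(w,v)$ equals $1$.

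For the backward direction I assume $\beta$ satisfies the permutation system, so in particular $\beta$ is a Naji solution and Naji's theorem gives that $G$ is a circle graph. To upgrade this to a permutation realization, I extend $G$ to $G^+$ by adjoining two new vertices $u_1,u_2$ with $N(u_i)=V(G)$ for $i=1,2$ and $u_1u_2\notin E(G^+)$, and I extend $\beta$ to $\beta^+$ by setting $\beta^+(u_i,v)=0$ and $\beta^+(v,u_i)=1$ for every $v\in V(G)$ and $i\in\{1,2\}$, together with $\beta^+(u_1,u_2)=0$ and $\beta^+(u_2,u_1)=1$. A case analysis shows that $\beta^+$ is a Naji solution for $G^+$: because each $u_i$ is universal to $V(G)$, no new type (b) equation arises, and the new type (c) equations come in just two shapes, namely ($v=u_i$, $w,x\in V(G)$ non-adjacent), which collapses via permutation (a) on the non-edge $wx$, and ($v\in V(G)$, $\{w,x\}=\{u_1,u_2\}$), which is a direct numerical check. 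Hence $G^+$ is a Naji graph, and Naji's theorem gives that $G^+$ is a circle graph.

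Finally, let $V$ be a double occurrence word with $\mathcal{I}(V)=G^+$. Because $u_1,u_2$ are non-adjacent but each is adjacent to every vertex of $G$, their four endpoints in $V$ must appear in the cyclic pattern $u_1,u_1,u_2,u_2$, and each remaining letter must have exactly one endpoint in the arc between the two $u_1$'s and one endpoint in the arc between the two $u_2$'s (otherwise it fails to interlace one of $u_1,u_2$). Deleting $u_1,u_2$ and cutting at the resulting gaps produces a linear word $W_1W_2$ with $\mathcal{I}(W_1W_2)=G$ and each letter appearing once in $W_1$ and once in $W_2$, which is precisely the standard representation of a permutation graph.

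The main obstacle is the verification that $\beta^+$ satisfies the Naji equations for $G^+$: the work is to confirm that the type (c) equations of $G^+$ involving $u_i$'s and the non-edge $u_1u_2$ really do collapse using permutation (a) on non-edges, which is exactly where the stronger condition beyond Naji is needed. Once that verification is in hand, the closing geometric step of reading a $W_1W_2$ decomposition off of $V$ is routine.
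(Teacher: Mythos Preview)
Your proof is correct and follows essentially the same strategy as the paper: adjoin universal vertices to $G$, check that the permutation solution $\beta$ extends to a Naji solution of the enlarged graph, invoke Naji's theorem, and then read a $W_1W_2$ permutation realization off the resulting double occurrence word. The only difference is that you adjoin two non-adjacent universal vertices $u_1,u_2$, whereas the paper more economically adjoins a single universal vertex $z$ with $\beta(-,z)\equiv 0$, $\beta(z,-)\equiv 1$ and cites the standard fact that $G$ is a permutation graph if and only if $G+z$ is a circle graph.
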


\begin{proof}
Suppose $G$ is the permutation graph corresponding to the permutation $\pi$.
Let $W$ be the oriented double occurrence word
\[
1^{in}...n^{in}\pi(n)^{out}...\pi(1)^{out}.
\]
Then the interlacement graph $\mathcal{I}(W)$ is $G$, and the Naji solution
$\beta$ corresponding to $W$ has the property that $\beta(w,x)\neq\beta(x,w)$
$\forall w\neq x\in V(G)$. Consequently $\beta$ satisfies the equations of the statement.

For the converse, suppose $\beta$ satisfies the equations of the statement,
and let $G+z$ be the graph obtained from $G$ by adjoining a new vertex $z$
adjacent to all the vertices of $G$. Extend $\beta$ by defining $\beta
(-,z)\equiv0$ and $\beta(z,-)\equiv1$. Then the extended $\beta$ is a Naji
solution for $G+z$, so $G+z$ is a circle graph. (The fact that $G$ is a
permutation graph if and only if $G+z$ is a circle graph is well known, and
easily proven without Naji's theorem; see for instance \cite[Exercise
11.12]{Go}.) If $zW_{1}zW_{2}$ is a double occurrence word\ with interlacement
graph $G+z$ then each vertex of $G$ must appear once in each $W_{i}$, in order
to be interlaced with $z$. Consequently $W_{1}$ and $W_{2}$\ provide a
permutation representation of $G$.
\end{proof}

Another way to say the same thing is this: an $n$-vertex simple graph is a
permutation graph if and only if it shares a Naji solution with $K_{n}$.

\section{Conclusion}

We finish the paper by mentioning several promising directions for further
research into the significance of Naji's theorem.

The first is suggested by\ a comment of Geelen, which was mentioned in the introduction:

\begin{problem}
Extend Theorem \ref{step1} and Corollary \ref{moresplit} to a precise
relationship between $\mathcal{B}(G)$ and the split decomposition of $G$.
\end{problem}

A researcher interested in this problem will appreciate the thorough
discussion of circle graphs and split decompositions given by Courcelle
\cite{C}.

A second problem was suggested by an anonymous reader.

\begin{problem}
Find special forms of the Naji equations that characterize other special types
of circle graphs, in addition to bipartite circle graphs and permutation graphs.
\end{problem}

There are many candidates for such \textquotedblleft special\textquotedblright%
\ circle graphs, such as diamond-free circle graphs \cite{DGR}, distance
hereditary graphs \cite{BM} and linear domino circle graphs \cite{BDGS}.

A third problem is suggested by the derivation of Naji's theorem from
Bouchet's circle graph obstructions theorem \cite{Bco} given by Gasse \cite{G}.

\begin{problem}
Derive Bouchet's obstructions theorem from Naji's theorem.
\end{problem}

In addition to Bouchet's characterization by obstructions, there are also
characterizations of circle graphs using binary matroids \cite{BT2},
delta-matroids \cite{GeelenPhD} and monadic second-order logic \cite{C}. The
first two of these characterizations involve the field $GF(2)$, and the third
involves the even cardinality set predicate. As the Naji equations are defined
over $GF(2)$, it seems reasonable to guess that Naji's equations might be
connected to them in some way.

\begin{problem}
Relate Naji's theorem to other characterizations of circle graphs.
\end{problem}

\end{document}